\newcommand{\normmm}[1]{{\left\vert\kern-0.25ex\left\vert\kern-0.25ex\left\vert #1 
		\right\vert\kern-0.25ex\right\vert\kern-0.25ex\right\vert}}
\newtheorem{theorem}{Theorem}[section]
\newtheorem*{theorem*}{Theorem B}
\newtheorem{lemma}[theorem]{Lemma}
\newtheorem{proposition}[theorem]{Proposition}
\newtheorem{corollary}[theorem]{Corollary}
\newtheorem*{observation*}{Observation}
\newtheorem*{assumption*}{Assumption}
\newtheorem*{question*}{Question}
\theoremstyle{definition}
\newtheorem*{definition*}{Definition}
\theoremstyle{remark}
\newtheorem*{remark*}{Remark}
\newcommand{\bt}{\boldsymbol{t}}
\newcommand{\bs}{\boldsymbol{s}}
\newcommand{\bz}{\boldsymbol{z}}
\newcommand{\bw}{\boldsymbol{w}}
\newcommand{\R}{\mathbb{R}}
\newcommand{\N}{\mathbb{N}}
\newcommand{\Z}{\mathbb{Z}}
\newcommand{\C}{\mathbb{C}}
\newcommand{\E}{\mathbb{E}}
\newcommand{\T}{\mathbb{T}}
\newcommand{\PP}{\mathbb{P}}
\newcommand{\supp}{\operatorname{supp}}
\newcommand{\GMC}{\mathrm{GMC}}
\newcommand{\an}{\text{\, and \,}}
\newcommand{\anand}{\quad\text{and}\quad}
\let\oldtocsection=\tocsection
\let\oldtocsubsection=\tocsubsection
\renewcommand{\tocsection}[2]{\hspace{0em}\oldtocsection{#1}{#2}}
\renewcommand{\tocsubsection}[2]{\hspace{2em}\oldtocsubsection{#1}{#2}}
\renewcommand\section{\@startsection{section}{1}{\z@}%
	{-3.5ex \@plus -1ex \@minus -.2ex}%
	{2.3ex \@plus.2ex}%
	{\normalfont\large\bfseries}}%
\numberwithin{equation}{section}
\begin{document}
	\title[Fourier dimension of GMC on high dimensional torus]{Exact values of Fourier dimensions of Gaussian multiplicative chaos on high dimensional torus}

	\author
	{Yukun Chen}
	\address
	{Yukun Chen: School of Mathematics and Statistics, Wuhan University, Wuhan 430072, China}
	\email{yukunchen@whu.edu.cn}
	
	\author
	{Zhaofeng Lin}
	\address
	{Zhaofeng LIN: School of Fundamental Physics and Mathematical Sciences, HIAS, University of Chinese Academy of Sciences, Hangzhou 310024, China}
	\email{linzhaofeng@ucas.ac.cn}
	
	\author
	{Yanqi Qiu}
	\address
	{Yanqi QIU: School of Fundamental Physics and Mathematical Sciences, HIAS, University of Chinese Academy of Sciences, Hangzhou 310024, China}
	\email{yanqi.qiu@hotmail.com, yanqiqiu@ucas.ac.cn}


\begin{abstract}
We determine the exact values of the Fourier dimensions for Gaussian Multiplicative Chaos measures on the $d$-dimensional torus $\T^d$ for all integers $d \ge 1$. This resolves a problem left open in previous works \cite{LQT24,LQT25} for  high dimensions $d\ge 3$. The proof relies on a new construction of log-correlated Gaussian fields admitting specific decompositions into smooth processes with high regularity. This construction enables a multi-resolution analysis to obtain sharp local estimates on the measure's Fourier decay. These local estimates are then integrated into a global bound using Pisier's martingale type inequality for vector-valued martingales.
\end{abstract}

	\subjclass[2020]{Primary 60G57, 42A61, 46B09; Secondary 60G46}
	\keywords{Gaussian multiplicative chaos;  Polynomial Fourier decay; Fourier dimension; Vector-valued martingale method}

	\maketitle
	\tableofcontents

	\setcounter{tocdepth}{2}

	\setcounter{tocdepth}{0}
	\setcounter{equation}{0}


	
\section{Introduction}\label{sec:intro}

Gaussian Multiplicative Chaos (GMC) is a central object in modern probability theory and mathematical physics. Originating from Kolmogorov--Obukhov model of turbulence, it has since found profound connections to various fields, including the theory of random surfaces in Liouville quantum gravity, statistical physics, and financial mathematics~\cite{RV14}. A GMC measure is a random measure constructed by exponentiating a log-correlated Gaussian field, and understanding its fine geometric and analytic properties is a subject of intense research.

A key question in this area is to characterize the fractal nature of GMC measures. The Fourier dimension, which is determined by the polynomial decay rate of the measure's Fourier transform, provides a crucial insight into its regularity. We confirm that the general phenomenon
\begin{center}
\textit{The Fourier dimension of the random measure coincides with its correlation dimension.}
\end{center}
holds for GMC on high dimensional torus. For GMC on one dimensional torus, this phenomenon was conjectured by Garban--Vargas in~\cite{GV23}.
	
Recent works by Lin, Qiu and Tan~\cite{LQT24,LQT25} made significant progress by verifying this phenomenon for the classical GMC measure on $[0,1)^d$ in low dimensions $d=1, 2$ and limited parameter regime for $d\ge 3$.
However, the methods therein encountered technical obstacles that prevented a full generalization to higher dimensional torus. The dimensional barrier stemmed from the decomposition of the log-correlated Gaussian field and the dyadic approximation method used there; specifically, the resulting random processes from that decomposition lacked sufficient regularity for the required analysis to hold in a high-dimensional setting. 

In this paper, by a well designed smooth decomposition of log-correlated Gaussian field and smooth partitions of unity on $\T^d$ adapted to dyadic structure, we overcome this dimensional barrier and provide the exact value of the Fourier dimension for a GMC measure on the $d$-dimensional torus for all $d \ge 1$. Our main result is  the following Theorem~\ref{thm:Fourier-decay-GMC}. 

The novel method used in this paper is significant not only in that it unifies the treatments of Fourier decay of GMC on torus of all dimensions ($d\ge 1$) but also in that it will enable us to deal with some ``abstract-Fourier-type decay'' of GMC measures on general compact Riemannian manifolds, where the Fourier-coefficients will be replaced by the avarage of  the $L^2$-normalized eigenfunctions of the Laplace-Beltrami operator, see \S \ref{sec-loc-es} for a brief explanation. This is the subject of our forthcoming work.

\subsection{Main results}

Let $\mu$ be a Borel measure on $\T^d$. The Fourier dimension of $\mu$ is defined by
\begin{equation}\label{eq:def-Fourierdim}
\dim_F(\mu)\coloneq \sup\big\{s\in [0,d):|\widehat{\mu}(\boldsymbol{n})|^2 =  O(|\boldsymbol{n}|^{-s}) ~\text{as}~|\boldsymbol{n}|\to\infty\big\}.
\end{equation}

We denote by $d_{\T^d}(\bz,\bw)$ the standard metric on $\T^d$, whose precise definition is in \eqref{eq:def-distance-Td} below. As usual, for all $x\in (0, \infty)$, we write $\log_+(x)=\max\{0,\log x\}$.
\begin{theorem}\label{thm:Fourier-decay-GMC}
	For any integer $d\ge 1$, there exists a centered log-correlated Gaussian field on $\T^d$ with covariance kernel
	\[
	K(\bz,\bw)=\log_+\frac{1}{d_{\T^d}(\bz,\bw)}+g(\bz\bar{\bw}),\quad \forall \bz\in \T^d\setminus\{\boldsymbol{1}\},
	\] 
	where $g$ is a bounded continuous function on $\T^d$, such that for any $\gamma\in (0,\sqrt{2d})$, almost surely, the sub-critical GMC measure $\GMC_K^\gamma$ has Fourier dimension
	\[
	\dim_F(\GMC_K^\gamma)= D_{\gamma,d}\coloneq \left\{
	\begin{array}{cl}
		d-\gamma^2, & \text{if  $0<\gamma<\sqrt{2d}/2$},
		\vspace{2mm}
		\\
		(\sqrt{2d}-\gamma)^2, & \text{if $\sqrt{2d}/2\leq\gamma<\sqrt{2d}$}.
	\end{array}\right.
	\]
\end{theorem}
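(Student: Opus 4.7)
The plan is to prove the two matching inequalities $\dim_F(\GMC_K^\gamma) \le D_{\gamma,d}$ and $\dim_F(\GMC_K^\gamma) \ge D_{\gamma,d}$ separately. The upper bound is the easier direction: it follows from the standard inequality relating Fourier dimension to the correlation-type dimension of a measure, combined with the known multifractal moment asymptotics of $\GMC_K^\gamma$ (in particular $\mathbb{E}[\GMC_K^\gamma(B(\bz,r))^2] \asymp r^{2d-\gamma^2}$ when $\gamma < \sqrt{2d}/2$, and a fractional-moment analysis at exponent $q = \sqrt{2d}/\gamma \le 2$ combined with Jensen for the supercritical regime). I would essentially follow~\cite{LQT24,LQT25} for this direction.

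For the lower bound $\dim_F \ge D_{\gamma,d}$, which is the main content of the theorem, the first step is to construct the log-correlated field as $X = \sum_{n \ge 0} Y_n$ where each $Y_n$ is a smooth centered Gaussian field whose covariance is spatially localized at scale $2^{-n}$ and whose variance is $O(1)$. Crucially, the $Y_n$ are chosen to be $C^\infty$ (rather than piecewise-constant on dyadic cubes as in~\cite{LQT24,LQT25}), so that the partial sum $X_N = Y_0 + \cdots + Y_N$ is itself $C^\infty$ with $\|\partial^\alpha X_N\|_\infty \lesssim 2^{N|\alpha|}$ with high probability, uniformly up to any prescribed order $|\alpha|$. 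The associated smoothed measure
\[
\mu_N(d\bz) = \exp\!\Big(\gamma X_N(\bz) - \tfrac{\gamma^2}{2} \mathbb{E}[X_N(\bz)^2]\Big) \, d\bz
\]
converges almost surely in the weak topology to $\GMC_K^\gamma$.

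Fix $\boldsymbol n \in \mathbb{Z}^d$ with $|\boldsymbol n|$ large, pick $N$ with $2^N \asymp |\boldsymbol n|$, and choose a smooth partition of unity $\{\chi_Q\}_{Q \in \mathcal{D}_N}$ on $\mathbb{T}^d$ subordinate to a dyadic cover by cubes of sidelength $2^{-N}$. The heart of the proof is the sharp local moment estimate
\[
\mathbb{E}\bigl|\widehat{\chi_Q \GMC_K^\gamma}(\boldsymbol n)\bigr|^2 \lesssim 2^{-N(d + D_{\gamma,d})},
\]
uniform over $Q \in \mathcal{D}_N$. This would be proved by conditioning on $\mathcal{F}_N = \sigma(Y_0, \ldots, Y_N)$, using the smoothness of $X_N$ on $Q$ to integrate by parts many times against the oscillatory phase $e^{-2\pi i \boldsymbol n \cdot \bz}$ (each integration by parts gains a factor $|\boldsymbol n|^{-1}$ while the derivative of the smooth density costs at most $2^N$, so the two balance), and combining with the sharp local second-moment asymptotic $\mathbb{E}[\GMC_K^\gamma(Q)^2] \asymp 2^{-N(2d-\gamma^2)}$ in the $L^2$-regime, respectively its thick-point-truncated analogue when $\gamma \ge \sqrt{2d}/2$.

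Finally, to aggregate these $2^{Nd}$ local contributions into a global bound that preserves the square-root arising from the approximate independence of fine scales across distinct cubes, I would view $\sum_Q \widehat{\chi_Q \GMC_K^\gamma}(\boldsymbol n)$ as the terminal value of a vector-valued martingale adapted to the multi-scale filtration $(\mathcal{F}_n)$, and apply Pisier's martingale type $2$ inequality in $L^p(\Omega)$ for a large even integer $p$. This should produce $\mathbb{E}|\widehat{\GMC_K^\gamma}(\boldsymbol n)|^p \lesssim |\boldsymbol n|^{-pD_{\gamma,d}/2}$, which via Markov and Borel--Cantelli over $\boldsymbol n \in \mathbb{Z}^d$ upgrades to the almost sure decay $|\widehat{\GMC_K^\gamma}(\boldsymbol n)|^2 = O(|\boldsymbol n|^{-s})$ for every $s < D_{\gamma,d}$. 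The main obstacle is the local estimate: integration by parts against a phase of frequency $|\boldsymbol n| \asymp 2^N$ requires $X_N \in C^k$ with uniform derivative bounds up to some order $k$ depending on $d$, which is precisely the regularity that the dyadic decomposition of~\cite{LQT24,LQT25} fails to supply in dimensions $d \ge 3$, and that the smooth multi-scale construction above is specifically designed to provide.
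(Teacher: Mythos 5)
Your construction of the field by smooth, scale-localized Gaussian summands, the reliance on $C^\infty$ regularity to allow repeated integration by parts against the oscillatory phase, and the use of a smooth partition of unity adapted to dyadic scales --- all of this matches the paper's Proposition~\ref{prop:smooth-decomposition-log-correlated-field}, Lemma~\ref{lemma:pou-scaling-Td}, and the integration-by-parts argument in \S\ref{sssec:Bound-Ihigh}. The overall architecture (local estimate, then Pisier-type martingale aggregation) is also the right one.

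There is, however, a concrete gap in the aggregation step. You propose to apply Pisier's inequality in $L^p(\Omega)$ for a \emph{large even integer} $p$, obtain $\E|\widehat{\mu_\infty}(\boldsymbol n)|^p\lesssim |\boldsymbol n|^{-pD_{\gamma,d}/2}$, and then conclude almost-sure decay by Markov and Borel--Cantelli over $\boldsymbol n\in\Z^d$. Two things go wrong. First, the GMC total mass has finite $p$-th moment only for $p<2d/\gamma^2$; in particular once $\gamma\ge\sqrt{2d}/2$ one is forced into $p\le 2$, and for $\gamma$ near $\sqrt{2d}$ only exponents barely above $1$ are available. A large even $p$ is therefore infeasible precisely in the harder regime, and even in the $L^2$ regime the optimal exponent is $p=2$, not large. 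Second, even granting the moment bound, Borel--Cantelli over $\Z^d$ requires summability of $|\boldsymbol n|^{ps/2-pD_{\gamma,d}/2}$, which forces $s<D_{\gamma,d}-2d/p$. Since $p\le 2$ this loses at least $d$ in the exponent --- for $\gamma$ near $\sqrt{2d}$ the loss exceeds $D_{\gamma,d}$ itself, so the conclusion would be vacuous. The paper avoids this by never reducing to pointwise moment bounds: it treats the whole sequence $(\langle\boldsymbol n\rangle^{\tau/2}\widehat\mu_m(\boldsymbol n))_{\boldsymbol n}$ as an $\ell^q$-valued martingale, proves $\E\|\cdot\|_{\ell^q}^p<\infty$ using martingale type $p$ of $\ell^q$ with $1<p\le 2\le q<\infty$, and then reads off the pointwise decay from finiteness of the $\ell^q$ norm. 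The summability over $\boldsymbol n$ only costs $2d/q$ in the exponent, which vanishes as $q\to\infty$, and the optimization over $p\in(1,2]$ (rather than a thick-point truncation) gives the dichotomy $\zeta(2)=d-\gamma^2$ versus $\zeta(\sqrt{2d}/\gamma)=(\sqrt{2d}-\gamma)^2$. You would need to replace your Borel--Cantelli step with this $\ell^q$ vector-valued formulation to close the argument.

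One smaller remark: the object $\sum_Q\widehat{\chi_Q\mu}(\boldsymbol n)$ is a scalar for each fixed $\boldsymbol n$, so it is not immediately a vector-valued martingale; the paper's decoupling is genuinely two-layered, first across scales $k$ via $\mathcal M_k-\mathcal M_{k-1}$ and then across spatial cubes $\boldsymbol I\in\mathscr D_k$ using the finite-range dependence of $X_k$ (which splits $\mathscr D_k$ into $5^d$ independent subfamilies). Both applications of \eqref{eq:def-Mtype}/\eqref{eq:def-ind-Mtype} take place at the $\ell^q$ level, with the $\boldsymbol n$-index as the Banach-space coordinate.
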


\begin{remark*}
We choose the torus $\mathbb{T}^d$ over the unit cube $[0,1)^d$ in \cite{LQT24,LQT25} to avoid boundary effects. A sharp truncation, for instance restricting a measure to the cube, changes its Fourier decay. For example, the Fourier coefficients of the Lebesgue measure on the cube decays as $|\boldsymbol{n}|^{-1}$ as $|\boldsymbol{n}|\to\infty$. This boundary effect may explain the lower bound restriction $\dim_F(\mu_\infty) \ge \min\{2, D_{\gamma,d}\}$ for the analysis on the cube in \cite[Theorem~1.2]{LQT25}.

Indeed, if we replace the sharp cutoff with a smooth truncation function that is compactly supported in the interior of $(0,1)^d$, then the strategy in this paper could also be adapted to the unit cube. 

The torus, being free of boundaries, provides a more direct and natural setting for our analysis.
\end{remark*}

\begin{remark*}
Recall that the correlation dimension of a measure $\mu$ is defined by (see, e.g.,~\cite[Lemma~2.6.6 and Definition~2.6.7]{BSS23})
\[
\dim_2(\mu)\coloneq \liminf_{\delta\to 0^+}\frac{\log \big(\sup\sum_i \mu(B(\boldsymbol{x}_i,\delta))^2\big)}{\log(1/\delta)},
\]
where the supremum is taken over all family of disjoint balls. It is a known result that the correlation dimension is given by $D_{\gamma,d}$ (see, e.g.,~\cite[Theorem~3.1 and Formula(3.2)]{Ber23}, \cite[Section~4.2]{RV14} and \cite[Remark~2]{GV23}; see also \cite[Lemma~3.6]{LQT25} for a precise formulation):  if the kernel
\[
K(\bz,\bw)=\log_+ \frac{1}{d_{\T^d}(\bz,\bw)}+g(\bz,\bw)
\]
has bounded continuous remainder $g$, then for any $\gamma\in (0, \sqrt{2d})$, almost surely, 
\[
\dim_2(\GMC_K^\gamma)=D_{\gamma,d}.
\]
Therefore, we can reformulate Theorem~\ref{thm:Fourier-decay-GMC} by stating that there exists a log-correlated Gaussian field on $\T^d$, such that the corresponding sub-critical GMC measure has the same Fourier dimension as its correlation dimension.
\end{remark*}

\subsection{Outline of the proof}
In this section, we give a broad overview of our strategy to prove Theorem~\ref{thm:Fourier-decay-GMC} by Pisier's martingale type-$p$ inequality and a multi-resolution framework. We illustrate the logical flow of the proof of Theorem~\ref{thm:Fourier-decay-GMC} in Figure~\ref{fig:proof_flow}.
\begin{figure}[!htbp]
	\centering
	\begin{tikzpicture}[
		node distance=1.5cm and 2cm,
		every node/.style={font=\small, align=center},
		box/.style={draw, rounded corners, minimum width=3.2cm, minimum height=1.3cm, fill=gray!10},
		arrowgreen/.style={-{Latex[length=3mm]}, thick, shorten >=4pt, shorten <=4pt, color=green},
		arrowred/.style={-{Latex[length=3mm]}, thick, shorten >=4pt, shorten <=4pt, color=red},
		arrowblue/.style={-{Latex[length=3mm]}, thick, shorten >=4pt, shorten <=4pt, color=blue},
		arrowgray/.style={-{Latex[length=3mm]}, thick, shorten >=4pt, shorten <=4pt, color=gray},
		desc/.style={midway, fill=white, inner sep=2pt}  
		]
		
		\node[box] (prop31) {Proposition \ref{prop:smooth-decomposition-log-correlated-field}\\(Smooth Decomposition)};
		\node[box, right= of prop31] (lemma46) {Lemma~\ref{lemma:p-moment-higher-derivatives-Xj}\\(Moment Estimates \\of Derivatives)};
		\node[box, below=0.7cm of lemma46] (lemma45) {Lemma \ref{lemma:p-moment-Xj}\\(Moment Estimates)};
		\node[box, right=of lemma46] (prop44) {Proposition \ref{prop:localization}\\(Local Estimate)};
		
		\node[box, below=3cm of prop31] (thm11) {Theorem \ref{thm:Fourier-decay-GMC}\\(Fourier Dimension of GMC)};
		\node[box, below=3 cm of prop44] (prop12) {Proposition \ref{prop:uniform-boundedness-Fourier-Lebesgue-norm}\\($p$-moment of $\mathcal{F}L^{\tau/2,q}$-norm)};

		\draw[arrowred] (prop31) -- (lemma46);
		\draw[arrowgray] (prop31) -- (lemma45);
		\draw[arrowgray] (lemma45) -- (prop12);
		\draw[arrowred] (lemma46) -- (prop44);
		\draw[arrowred] (prop44) -- (prop12);
		\path (prop44) -- (prop12) node[desc, pos=0.5, right=-2.5cm] {integrate by Pisier's martingale\\ type-$p$ inequality};
		
		\draw[arrowblue] (prop12) -- (thm11);
		\path (prop12) -- (thm11) node[desc, pos=0.5, below=0.3cm] {implies lower bound of $\dim_F(\mu_\infty)$};
		\draw[arrowblue] (prop31) -- (thm11);
		\path (prop31) -- (thm11) node[desc, pos=0.5, right=-1.5cm] {implies upper bound \\of $\dim_F(\mu_\infty)$};
	\end{tikzpicture}
	\caption{Logical flow of the proof of Theorem~\ref{thm:Fourier-decay-GMC}. The color scheme distinguishes the roles of different components in the argument: red highlights the core technical path for establishing the lower bound; blue shows how to obtain the upper and lower bounds for the final theorem; and gray represents supporting derivations.}
	\label{fig:proof_flow}
\end{figure}
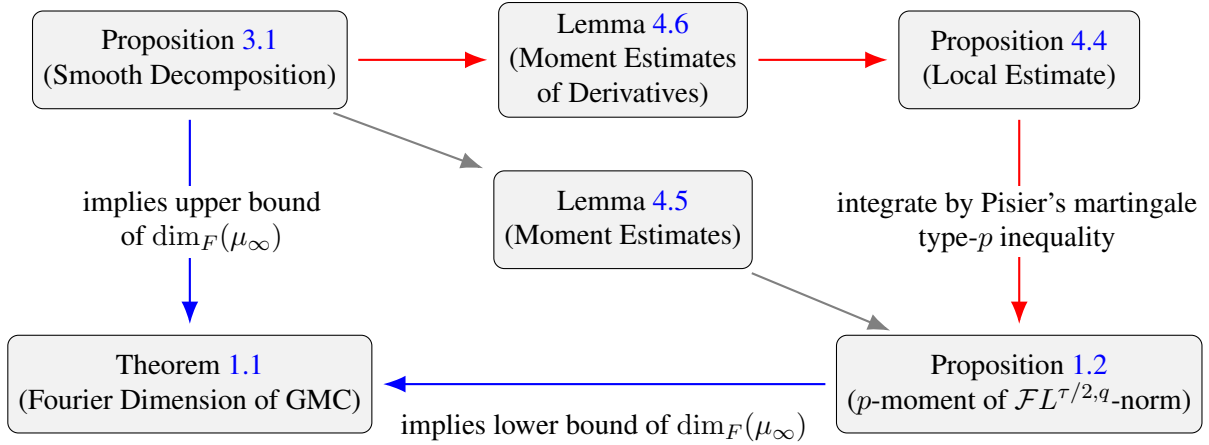

We first give a brief introduction to the definition of GMC measure. Let $\mathcal{K}(\bz,\bw)$ be a positive-definite kernel on $\T^d$ with
\[
K(\bz,\bw)=\log_+ \frac{1}{d_{\T^d}(\bz,\bw)}+g(\bz,\bw),\quad \forall \bz,\bw\in \T^d\setminus \{\bz=\bw\},
\]
where $g$ is a bounded function on $\T^d\times \T^d$. Following Kahane~\cite{Kah85}, if $K$ can be decomposed as
\begin{equation}\label{eq:sigma-positivity}
K(\bz,\bw)=\sum_{j=1}^\infty K_{j}(\bz,\bw),
\end{equation}
where $\mathcal{K}_j$ is continuous and non-negative definite kernel, then we say that $K$ is \textit{$\sigma$-positive}. Let $\psi_j$ be a centered Gaussian process generated by $K_j$. Define
\begin{equation}\label{eq:def-Xj}
\Big\{X_j(\bz)=\exp\big(\gamma\psi_j(\bz)-\frac{\gamma^2}{2}\E[\psi_j(\bz)^2]\big):\bz\in \T^d\Big\}.
\end{equation}
Let
\[
\mu_k(\mathrm{d}\bz)\coloneq \prod_{j=1}^k X_j(\bz)\,\boldsymbol{m}(\mathrm{d}\bz).
\]
According to Kahane's $T$-martingale theory~\cite{Kah87}, the sequence of random measures $\{\mu_k\}$ weakly converges to a limit random measure $\mu_{\infty}$ almost surely. If $\gamma\in (0,\sqrt{2d})$, then the limit measure $\mu_\infty$ is non-degenerate, that is, $\mu_\infty(\T^d)\ne 0$, a.s. A key result from Kahane is that this limit measure is also independent of the decomposition in \eqref{eq:sigma-positivity}. The GMC measure $\GMC_K^\gamma$ in Theorem~\ref{thm:Fourier-decay-GMC} is then defined by
\[
\GMC_K^\gamma\coloneq \mu_\infty.
\]
Henceforth, we shall simply use the notation $\mu_\infty$ rather than $\GMC_K^\gamma$.

The upper bound of $\dim_F(\mu_\infty)$ is given
\[
\dim_F(\mu_\infty)\le \dim_2(\mu_\infty),
\]
while the exact value of correlation dimension $\dim_2(\mu_\infty)$ was known to be $D_{\gamma,d}$ (see Bertacco \cite[Theorem~3.1 and Formula~(3.2)]{Ber23}, Rhodes--Vargas \cite[Section~4.2]{RV14}, Garban--Vargas \cite[Remark~2]{GV23} and Lin--Qiu--Tan \cite[Lemma~3.6]{LQT25}). 

The main difficulty lies in proving the matching lower bound, $\dim_F(\mu_\infty) \ge D_{\gamma,d}$. Our strategy is to prove that the GMC measure almost surely lies in a certain function space called the (weighted) \textit{Fourier--Lebesgue space}.  Denote the space of Schwartz distributions on $\T^d$ by $\mathcal{D}'(\T^d)$. Let $s\in \R$ and $1\le q<\infty$. The Fourier--Lebesgue space is defined as
\[
\mathcal{F}L^{s,q}(\T^d)\coloneq \Big\{f\in \mathcal{D}'(\T^d):\|f\|_{\mathcal{F}L^{s,q}(\T^d)}<\infty\Big\}, 
\]
with the norm $\|\cdot\|_{\mathcal{F}L^{s,q}(\T^d)}$ defined by
\begin{equation}\label{eq:def-FLnorm}
\|f\|_{\mathcal{F}L^{s,q}(\T^d)}\coloneq \Big\{\sum_{\boldsymbol{n}\in \Z^d} \langle\boldsymbol{n}\rangle^{s q}|\widehat{f}(\boldsymbol{n})|^q\Big\}^{1/q},\quad 1\leq q<\infty,
\end{equation}
where \(\langle\boldsymbol{n}\rangle\coloneq (1+|\boldsymbol{n}|^2)^{1/2}\) and
\[
|\boldsymbol{n}|\coloneq \Big(\sum_{i=1}^d |n_i|^2\Big)^{1/2},\quad \forall \boldsymbol{n}=(n_1,n_2,\ldots,n_d)\in \Z^d.
\]
 Note that once we can prove $f\in \mathcal{F}L^{s,q}$ for some $q\ge 1$, then we immediately have $|\widehat{f}(\boldsymbol{n})|=o(|\boldsymbol{n}|^{-s})$. This is why we can use this space to detect the Fourier decay of GMC measure. 

The Fourier--Lebesgue space arose in the context of time-frequency analysis. It was shown in~\cite[Proposition~2.1 and Remark 4.2]{RSTT11} that the Fourier--Lebesgue space coincides with classical space like modulation space and Wiener amalgam space. For those who are interested in the connection of probability and these spaces, we refer to~\cite{BO11}, where they study the regularity of Brownian motion in modulation spaces.

The lower bound of Fourier dimension of the GMC measure \(\mu_\infty\) is given by the following proposition.
\begin{proposition}\label{prop:uniform-boundedness-Fourier-Lebesgue-norm}
For any $\tau\in (0,D_{\gamma,d})$, there exist $p$ and $q$ satisfying $1<p\le 2\le q<\infty$, such that
\[
\mathbb{E}\big[\|\mu_\infty\|_{\mathcal{F}L^{\tau/2,q}}^p\big]=\sup_{m\ge 1}\E\big[\|\mu_m\|_{\mathcal{F}L^{\tau/2,q}}^p\big]=\sup_{m\ge 1}\E\Big[\Big\{\sum_{\boldsymbol{n}\in \Z^d} \langle\boldsymbol{n}\rangle^{\tau q/2}|\widehat{\mu}_m(\boldsymbol{n})|^q\Big\}^{p/q}\Big]<\infty.
\]
\end{proposition}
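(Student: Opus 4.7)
The plan is to apply Pisier's martingale type-$p$ inequality to the Banach-space-valued martingale $(\mu_m)_{m\ge 1}$ taking values in the Fourier--Lebesgue space $\mathcal{F}L^{\tau/2,q}(\T^d)$. Since, via Fourier coefficients, this space is isometrically isomorphic to a weighted $\ell^q(\Z^d)$, and $\ell^q$ has martingale type $\min(q,2)=2$ whenever $q\ge 2$, Pisier's inequality yields, for any $1<p\le 2$ and $q\ge 2$,
\begin{equation*}
\sup_{m\ge 1}\mathbb{E}\big[\|\mu_m\|_{\mathcal{F}L^{\tau/2,q}}^p\big]
\le C_{p,q}\Big(\mathbb{E}\big[\|\mu_1\|_{\mathcal{F}L^{\tau/2,q}}^p\big]
+\sum_{m\ge 2}\mathbb{E}\big[\|\mu_m-\mu_{m-1}\|_{\mathcal{F}L^{\tau/2,q}}^p\big]\Big).
\end{equation*}
The equality $\mathbb{E}[\|\mu_\infty\|^p]=\sup_m\mathbb{E}[\|\mu_m\|^p]$ then follows from the submartingale property of $\{\|\mu_m\|^p\}$ (which gives monotone non-decrease of the expectations in $m$) together with Fatou's lemma applied to the pointwise convergence $\widehat{\mu_m}(\boldsymbol{n})\to\widehat{\mu_\infty}(\boldsymbol{n})$ of Fourier coefficients.

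The task thus reduces to showing that for some admissible $(p,q)$ the right-hand side above is finite. The base term $\mathbb{E}[\|\mu_1\|^p]$ is immediate from Lemma~\ref{lemma:p-moment-Xj}, since $\mu_1$ is a smooth Gaussian exponential times Lebesgue measure. For each martingale-difference term with $m\ge 2$, the strategy is to localize at dyadic scale $2^{-m}$: introduce a smooth partition of unity $\{\eta_I^{(m)}\}_I$ subordinate to dyadic cubes of side $2^{-m}$, so that
$$\mu_m-\mu_{m-1}=\sum_I \eta_I^{(m)}\,(\mu_m-\mu_{m-1}).$$
By the triangle inequality in $\mathcal{F}L^{\tau/2,q}$ combined with an elementary H\"older argument over the $\sim 2^{md}$ cubes at scale $m$, the problem reduces to bounding the $p$-th moments of the localized Fourier--Lebesgue norms $\|\eta_I^{(m)}(\mu_m-\mu_{m-1})\|_{\mathcal{F}L^{\tau/2,q}}$; this is precisely the content of the local estimate Proposition~\ref{prop:localization}.

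Proposition~\ref{prop:localization} itself rests on two ingredients: the smooth decomposition Proposition~\ref{prop:smooth-decomposition-log-correlated-field}, which produces Gaussian components $\psi_j$ of arbitrarily high regularity, and the derivative moment bounds Lemma~\ref{lemma:p-moment-higher-derivatives-Xj}, which translate this spatial regularity into polynomial Fourier decay of each smooth localized piece. Summing the resulting bound over the $\sim 2^{md}$ cubes at scale $m$ and then over $m\ge 1$ produces a geometric series whose convergence is governed by an explicit exponent $\alpha(p,q,\gamma,d,\tau)$. The principal obstacle is the simultaneous calibration of $p$ and $q$: Pisier's inequality forces $q\ge 2$, while the local estimate forces $p$ close to $1$ and $q$ close to $2$ in order to push $\tau$ all the way to $D_{\gamma,d}$. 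Showing that the admissible region for $(p,q)$ remains non-empty for every $\tau<D_{\gamma,d}$ is the crucial bookkeeping step, and it is exactly the sharpness of Proposition~\ref{prop:localization}, made possible by the new smooth decomposition and smooth partition of unity, that allows this. Without such sharpness one would only reach a lower bound for $\dim_F(\mu_\infty)$ strictly below $D_{\gamma,d}$, as happened for the dyadic approach of~\cite{LQT24,LQT25} when $d\ge 3$.
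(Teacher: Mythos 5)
Your overall architecture matches the paper's: realize the weighted Fourier coefficients as an $\ell^q$-valued martingale, apply Pisier's martingale type-$p$ inequality to decouple scales, then localize each martingale difference with a smooth partition of unity and feed the pieces to the local estimate. However, there is a genuine and fatal gap in your spatial decoupling step.

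You propose to pass from $\|\mu_k-\mu_{k-1}\|_{\mathcal{F}L^{\tau/2,q}}$ to the sum of localized pieces by ``the triangle inequality \ldots combined with an elementary H\"older argument over the $\sim 2^{kd}$ cubes.'' That argument gives
\[
\E\big[\|\mu_k-\mu_{k-1}\|_{\mathcal{F}L^{\tau/2,q}}^p\big]\le (\#\mathscr{D}_k)^{p-1}\sum_{\boldsymbol{I}\in\mathscr{D}_k}\E\big[\|\mathfrak{D}^\varphi_{\boldsymbol{I}}\|_{\ell^q}^p\big]
\]
with $\#\mathscr{D}_k\asymp 2^{kd}$, i.e.\ you pay an extra factor $2^{kd(p-1)}$. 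Plugging in Proposition~\ref{prop:localization} and Lemma~\ref{lemma:p-moment-Xj}, the resulting exponent becomes $dp/q+\tau p/2+p(p-1)\gamma^2/2$, which is strictly positive for all admissible $p,q,\tau$; the geometric series then diverges and you cannot reach any positive $\tau$, let alone $\tau$ close to $D_{\gamma,d}$. So this is not merely suboptimal bookkeeping: the triangle-plus-H\"older route produces nothing.

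The paper avoids this loss by applying Pisier's inequality a \emph{second} time, in its form for sums of independent centered $\ell^q$-valued random variables (inequality~\eqref{eq:def-ind-Mtype}). The key structural input is property (P1) of Proposition~\ref{prop:smooth-decomposition-log-correlated-field}: $X_k$ has finite range of dependence $3\cdot 2^{-k}$. This lets one partition $\mathscr{D}_k$ into at most $5^d$ subfamilies $\mathscr{D}_{k,1},\ldots,\mathscr{D}_{k,S_k}$ such that within each subfamily the supports $2\boldsymbol{I}$ are pairwise $3\cdot 2^{-k}$-separated, hence the corresponding $\mathfrak{D}^\varphi_{\boldsymbol{I}}$ are independent and centered (conditionally on $\mathscr{G}_{k-1}$). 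Applying H\"older over the $S_k\le 5^d$ subfamilies costs only the $k$-independent constant $5^{d(p-1)}$, and then the second Pisier application within each subfamily gives
\[
\E\big[\|\mathcal{M}_k-\mathcal{M}_{k-1}\|_{\ell^q}^p\big]\lesssim \sum_{\boldsymbol{I}\in\mathscr{D}_k}\E\big[\|\mathfrak{D}^\varphi_{\boldsymbol{I}}\|_{\ell^q}^p\big],
\]
with implicit constant independent of $k$. It is precisely this second decoupling, driven by the finite-dependence property of the newly constructed field, that trades the catastrophic factor $2^{kd(p-1)}$ for a constant, and makes the final exponent $-(p-1)d+dp/q+\tau p/2+p(p-1)\gamma^2/2$ negative for $\tau<D_{\gamma,d}$ after optimizing in $p$. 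Your proposal is missing this idea entirely.

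A minor secondary remark: the identification $\E[\|\mu_\infty\|^p]=\sup_m\E[\|\mu_m\|^p]$ needs slightly more than Fatou plus submartingality (Fatou only gives $\le$); one should also invoke that $\ell^q$ ($1<q<\infty$) has the Radon--Nikodym property so that the $L^p$-bounded martingale $(\mathcal{M}_m)$ converges in $L^p(\PP;\ell^q)$, giving the reverse inequality. The paper glosses over this too, so it is not a distinguishing defect of your proposal, but worth being aware of.
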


To prove Proposition~\ref{prop:uniform-boundedness-Fourier-Lebesgue-norm}, the core idea is to first decouple the problem across multiple scales and locations, reducing it to a local estimate, and then to establish this local estimate using new analytic techniques.

\subsubsection{Random decoupling scheme: vector-valued martingale method}

Our proof unfolds within a \textit{multi-resolution framework}, which naturally mirrors the inherent scale-by-scale, multiplicative construction of the GMC measure. This framework allows us to exploit the inherent martingale structure of GMC, which is a cornerstone of further application of \textit{Pisier's martingale type inequality}. By applying Pisier's martingale type-$p$ inequality twice, we can decouple the global estimate of $\E[\|\mu_m\|_{\mathcal{F}L^{\tau/2,q}}^p]$ into localized pieces with various scales and locations. See \S\ref{sssec:Pisier-mar-p-ineq} for a short introduction to this inequality and Step 1 and Step 2 in \S\ref{ssec:proof-uniform-bdn} for its application in the proof of Proposition~\ref{prop:uniform-boundedness-Fourier-Lebesgue-norm}.

This method is called \textit{vector-valued martingale method}, which is initially introduced in~\cite{CHQW24} in the setting of Mandelbrot cascade and later applied on one-dimensional GMC measure in~\cite{LQT24} and several models including high dimensional GMC on $[0,1)^d$ in sequential work~\cite{LQT25}. 

Let us make the argument slightly precise. Denote the $k$-generation dyadic cubes in $[-1/2,1/2)^d$ by $\mathscr{D}_k$ (see \eqref{eq:def-D_j} for its precise definition). Exploiting the inherent martingale structure of $\mu_m$ and applying Pisier's martingale type-$p$ inequality twice, we can decouple the $p$-moment of $\mathcal{F}L^{\tau/2,q}$ into sum of ``localized Fourier--Lebesgue norms'' of $\mu_m$, that is,
\[
\mathbb{E}\big[\|\mu_m\|_{\mathcal{F}L^{\tau/2,q}}^p\big]\lesssim \sum_{k=1}^m \sum_{\boldsymbol{I}\in \mathscr{D}_k}\mathbb{E}[\|\mathfrak{D}_{\boldsymbol{I}}^\varphi\|_{\ell^q}^p],
\]
where
\begin{equation}\label{eq:intro-def-DIphi}
	\mathfrak{D}_{\boldsymbol{I}}^\varphi(\boldsymbol{n})\coloneq \langle \boldsymbol{n}\rangle^{\tau/2}\int_{\T^d}\varphi_{\boldsymbol{I}}(\bz)\bigg[\prod_{j=1}^{k-1}X_j(\bz)\bigg]\cdot(X_k(\bz)-1)\cdot \bz^{\boldsymbol{n}}\,\boldsymbol{m}(\mathrm{d}\bz)
\end{equation}
and the support of $\varphi_{\boldsymbol{I}}$ is concentrated in a small neighbourhood of $\boldsymbol{I}$ (see \S\ref{sssec:pou} for its precise statement). Therefore, we may intuitively regard the term $\|\mathfrak{D}_{\boldsymbol{I}}^\varphi\|_{\ell^q}$ as a local analogue of the Fourier-Lebesgue norm of $\mu_m$.

\subsubsection{Localization estimate}\label{sec-loc-es}

While the vector-valued martingale method provides the global scheme for the proof, its success relies on a sharp enough estimate for $\mathbb{E}[\|\mathfrak{D}_{\boldsymbol{I}}^\varphi\|_{\ell^q}^p]$.  This is the content of Proposition~\ref{prop:localization} and represents the main technical innovation of this paper, allowing us to overcome the dimensional barrier encountered in previous works.

The core of this analysis lies in estimating the Fourier decay of the localized measure. A standard technique for estimating such oscillatory integrals is repeated integration by parts in \eqref{eq:intro-def-DIphi}, which we employ here via Green's identity in  \S\ref{sssec:Bound-Ihigh}. Our choice to center this technique on the Laplacian is deliberate and twofold. 
\begin{enumerate}
	\item First, the Laplacian is the natural operator for this task, as each application generates a decay factor $|\boldsymbol{n}|^{-2}$ in the frequency domain, directly corresponding to the structure of the Fourier-Lebesgue norm \eqref{eq:def-FLnorm} and Fourier dimension \eqref{eq:def-Fourierdim}.
	\item Second, our analysis is fundamentally based on using the eigenfunctions of the relevant Laplacian on $\T^d$, this is the Fourier basis $\{\bz^{\boldsymbol{n}}\}_{\boldsymbol{n}\in \Z^d}$. This principle extends directly to a general compact Riemannian manifold, where the analysis would rely on the eigenfunctions of the Laplace-Beltrami operator. Indeed, extending this analysis to general compact Riemannian manifolds without boundary is the subject of our forthcoming work.
\end{enumerate}

However, for this classical analytical tool to be effective in our probabilistic setting, the random function being analyzed (that is, the product of our processes $\{X_j\}_{j\in \mathbb{N}\setminus \{0\}}$) and the localizing window functions must have sufficient regularity ($C^\infty$ for example) to withstand repeated differentiation in iterated integration by parts.

Direct application of previous methods in \cite{LQT25} fails because they could not meet both regularity conditions. The following two central technical advancements are designed in this work.
\begin{enumerate}
	\item First, we introduce a log-correlated Gaussian field which admits a decomposition into smooth processes of high regularity by a designed convolution argument. This is the cornerstone of our analysis. The regularity of paths $\{X_j\}_{j\in \mathbb{N}\setminus \{0\}}$ admit the application of integration by parts to gain faster decay. This is the theme of \S\ref{sec:smooth-decomposition}.
	\item Second, our multi-resolution analysis relies on a careful localization technique using a smooth partition of unity $\{\varphi_{\boldsymbol{I}}\}_{\boldsymbol{I}\in \mathscr{D}_k}$. This is a deliberate choice over a simpler truncation with characteristic functions. In~\cite{LQT25}, the sharp spatial cutoff introduces significant spectral leakage, which would distort the frequency content of the localized measure and prevent a sharp estimate. The partition of unity is introduced in Lemma~\ref{lemma:pou-scaling-Td} and it is the building block of the localization estimate Proposition~\ref{prop:localization}. 
\end{enumerate}

We borrow the term ``spectral leakage'' from signal processing (see, e.g., \cite[Example~10.3]{OS10} and \cite{Har78}) to emphasize the intuition that direct truncation considerably disturbs the Fourier spectrum. This can be seen even in the simplest deterministic case. Consider the constant function $f(z) = 1$ for all $z \in \mathbb{T}\coloneq\{z\in \C:|z|=1\}$. Its Fourier series is trivial, with coefficients $\widehat{f}(0) = 1$ and $\widehat{f}(n) = 0$ for all $n \neq 0$. This represents a perfect spectrum with no leakage.

Now, let's observe the effect of applying a window function to $f(z)$. We compare two cases:
\begin{enumerate}
	\item \textbf{A sharp window:} Let $g_1(z)$ be the characteristic function of the arc $\{e^{2\pi i t}:t\in [-1/4, 1/4]\}$. To compute its $n$-th Fourier coefficient, we use the parameterization $z=e^{2\pi it}$:
	\[
	\widehat{f\cdot g_1}(n) = \int_{\mathbb{T}} g_1(z) z^n\,m(\mathrm{d}z) = \int_{-1/4}^{1/4} e^{2\pi i nt}\,\mathrm{d}t = \frac{\sin(\pi n/2)}{\pi n}.
	\]
	This exhibits a slow algebraic decay of order $|n|^{-1}$ as $|n|\to \infty$, which is a poor approximation of the true spectrum (which is zero).
	
	\item \textbf{A smooth window:} Let $g_2(z)$ be any $C^\infty$ function on $\mathbb{T}$ supported on the same arc. Its $n$-th Fourier coefficient is
	\[
	\widehat{f\cdot g_2}(n) = \int_{\mathbb{T}} g_2(z) z^n\,m(\mathrm{d}z)=\int_{-1/2}^{1/2}g(e^{it})e^{2\pi i nt}\,\mathrm{d}t.
	\]
	Through repeated integration by parts, we can see that its coefficients decay faster than any polynomial, that is, $|\widehat{g_2}(n)| = O(|n|^{-k})$ as $|n|\to \infty$ for any $k \in \mathbb{N}$.
\end{enumerate}
The rapid decay of $\widehat{g_2}(n)$ is a far better approximation of the zero-valued spectrum of the original signal $f(z)=1$. This illustrates how a smooth window better preserves the underlying spectral properties.

It is the combination of this localization technique that effectively suppresses spectral leakage and the high regularity of our underlying processes that ultimately allows the global martingale argument to succeed.

\subsection{Organization of the paper}
In the Introduction \S\ref{sec:intro}, we present the background on Gaussian Multiplicative Chaos, state our main result on the exact Fourier dimension of GMC measures on the torus (Theorem~\ref{thm:Fourier-decay-GMC}), and outline the proof strategy.

In \S\ref{sec:notations-preliminaries}, we introduce the necessary notations and preliminary tools for our analysis, including Pisier's martingale type inequality and a smooth partition of unity on the torus.

In \S\ref{sec:smooth-decomposition}, we construct a log-correlated Gaussian field that admits a decomposition into highly regular, smooth processes and establish its crucial properties in Proposition~\ref{prop:smooth-decomposition-log-correlated-field}.

In \S\ref{sec:proof-main-thm}, we prove the main theorem by establishing a key moment bound for the Fourier-Lebesgue norm of the GMC measure (Proposition~\ref{prop:uniform-boundedness-Fourier-Lebesgue-norm}). 

In Appendix~\ref{appendix:pou}, we provide the detailed construction of the smooth partition of unity (Lemma~\ref{lemma:pou-scaling-Td}) used in the multi-resolution analysis in \S\ref{sec:proof-main-thm}.

\subsection*{Acknowledgements}  YQ is supported by National Natural Science Foundation of China (NSFC No. 12471145).

\section{Notations and preliminaries}\label{sec:notations-preliminaries}

\subsection{Notations}

Let $d\ge 1$ be an integer. We identify $d$-dimensional torus $\T^d$ with $(S^1)^d\subset \mathbb{C}^d$ by
\[
\T^d\coloneq\{(z_1,z_2,\ldots,z_d)\in \C^d:|z_i|=1,i=1,2,\ldots,d\}.
\]
The group operation on $\T^d$ is coordinate-wise multiplication, that is, for $\bz=(z_1,z_2,\ldots,z_d)$ and $\bw=(w_1,w_2,\ldots,w_d)$ in $\T^d$, 
\[
\bz\cdot \bw\coloneq (z_1w_1,z_2w_2,\ldots,z_d w_d).
\]
We also write $\bz\bw$ instead of $\bz\cdot \bw$ for simplicity. The unit of multiplication on $\T^d$ is denoted by $\boldsymbol{1}$.

As usual, we denote
\[
e(t)=e^{2\pi i t},\quad \forall t\in \R.
\]
For each $z_i\in \T$, there exists a unique $t_i\in [-1/2,1/2)$, such that $z_i=e(t_i)$. For convenience, we define
\begin{align*}
\begin{array}{cccc}
\mathcal{E}\colon & \R^d& \xrightarrow{\quad\quad} & \T^d
\vspace{2mm}
\\
& (t_1,t_2,\ldots,t_d) & \mapsto& (e(t_1),e(t_2),\ldots,e(t_d)).
\end{array}
\end{align*}
Therefore, we can parameterize $\T^d$ as
\begin{align}\label{Td-para}
\T^d=\big\{\mathcal{E}(\bt)\in \C^d:\bt\in [-1/2,1/2)^d\big\}.
\end{align}

For each point $\bz\in \T^d$, define
\[
\|\bz\|_{\T^d}\coloneq \inf_{\mathcal{E}(\bt)=\bz}\|\bt\|,
\]
where $\|\bt\|$ is the Euclidean norm on $\R^d$, that is,
\[
\|\bt\|=\bigg(\sum_{j=1}^d |t_j|^2\bigg)^{1/2},\quad \forall \bt=(t_1,t_2,\ldots,t_d)\in \R^d.
\]
If $\bz=\mathcal{E}(\bt)$ for some $\bt\in [-1/2,1/2)^d$, then we have the explicit expression
\[
\|\bz\|_{\T^d}=\bigg(\sum_{j=1}^d |t_j|^2\bigg)^{1/2}.
\]
This induces the metric $d_{\T^d}$ on $\T^d$ by 
\begin{equation}\label{eq:def-distance-Td}
d_{\T^d}(\bz,\bw)\coloneq\|\bz\bar{\bw}\|_{\T^d},\quad \forall \bz,\bw\in \T^d,
\end{equation}
where $\bar{\bw}=(\bar{w}_1,\bar{w}_2,\ldots,\bar{w}_d)$ is the complex conjugate of $\bw$. For $r>1$ and $\bz\in \T^d$, we denote
\[
\overline{B}_{\T^d}(\bz,r)\coloneq \{\bw\in \T^d:d_{\T^d}(\bz,\bw)\le r\}.
\]
While we reserve the symbol $\overline{B}(\bt,r)$ for closed ball in $\R^d$, that is,
\[
\overline{B}(\bt,r)\coloneq\{\bs\in \R^d:\|\bt-\bs\|\le r\}.
\]

Denote the family of $j$-th generation of dyadic cubes in $\R^d$ by
\[
\widetilde{\mathscr{D}}_{j}^d=\widetilde{\mathscr{D}}_{j}\coloneq \bigg\{[0,2^{-j})^d+2^{-j}\boldsymbol{h}: \boldsymbol{h}\in \Z^d\bigg\}
\]
and the family of $j$-th generation of dyadic cubes in $[-1/2,1/2)^d$, that is, the Euclidean coordinate of $\T^d$, by
\begin{equation}\label{eq:def-D_j}
\mathscr{D}_{j}^d=\mathscr{D}_{j}\coloneq \bigg\{[0,2^{-j})^d+2^{-j}\boldsymbol{h}: \boldsymbol{h}\in \big\{-2^j,-2^j+1,\ldots,2^{j}-1\big\}^d\bigg\}.
\end{equation}
For any cube $\boldsymbol{I} \subset \R^d$ and scalar $\lambda > 0$, 
we denote by $\lambda \boldsymbol{I}$ the cube concentric with $\boldsymbol{I}$ 
and scaled by a factor of $\lambda$. 

Let $\boldsymbol{\alpha}= (\alpha_1,\alpha_2,\ldots,\alpha_d)\in \N^d$ be a multi-index. The length $|\boldsymbol{\alpha}|$ of $\boldsymbol{\alpha}$ is defined by 
\[
|\boldsymbol{\alpha}|\coloneq \alpha_1+\alpha_2+\cdots+\alpha_d.
\]

For any sufficiently smooth function $f\colon \T^d\to \mathbb{C}$  written as 
\[
(e(t_1),e(t_2),  \ldots,  e(t_d))\mapsto f(e(t_1),e(t_2),\ldots,e(t_d))
\]
and  any multi-index $\boldsymbol{\alpha}=(\alpha_1,\alpha_2,\ldots,\alpha_d)$, define
\[
(D^{\boldsymbol{\alpha}} f)(e(t_1),e(t_2),\ldots,e(t_d))\coloneq (D_1^{\alpha_1}D_2^{\alpha_2}\cdots D_d^{\alpha_d}f)(e(t_1),e(t_2),\ldots,e(t_d)),
\]
where $D_i$ denotes the first order partial derivative of $f$ with respect to $t_i$, that is,
\[
(D_i f)(e(t_1),e(t_2),\ldots,e(t_d))\coloneq\lim_{h\to 0}\frac{f(e(t_1),\ldots,e(t_i+h),\ldots,e(t_d))-f(e(t_1),\ldots,e(t_i),\ldots,e(t_d))}{h}.
\]
On the other hand, if  $g \colon U\to \R,\bt\mapsto g (\bt)$ is a sufficient smooth function on a domain  $U\subset \R^d$, to distinguish from the partial derivatives on $\T^d$, we denote the partial derivative of $g$ with respect to the $i$-th coordinate by $\partial_i g$ and 
\[
(\partial^{\boldsymbol{\alpha}} g)(\bt)\coloneq (\partial_1^{\alpha_1}\partial_2^{\alpha_2}\cdots \partial_d^{\alpha_d}g)(\bt).
\] 

We also denote by  $\boldsymbol{m}$ the normalized Haar measure on $\T^d$. To be specific, for any $f\in L^1(\T^d)$,
\begin{align*}
\int_{\T^d} f(\bz)\,\boldsymbol{m}(\mathrm{d}\bz)&=\int_{[-1/2,1/2)^d}f(e(t_1),e(t_2),\ldots,e(t_d))\,\mathrm{d}t_1\mathrm{d}t_2\cdots\mathrm{d}t_d 
\\
& =\int_{[-1/2,1/2)^d}f(\mathcal{E}(\bt))\,\mathrm{d}\bt,
\end{align*}
where $\mathrm{d}\bt\coloneq \mathrm{d}t_1\mathrm{d}t_2\cdots\mathrm{d}t_d$ is the Lebesgue measure on $[-1/2,1/2)^d$.

For any integer $k \ge 1$ and any $C^k(\R^d)$ function $f$, its $C^k$-norm is defined by
\[
\|f\|_{C^k(\R^d)}\coloneq \sum_{|\boldsymbol{\alpha}|\le k}\|\partial^{\boldsymbol{\alpha}}f\|_{L^\infty(\R^d)}.
\]

We also use the standard notation 
\[
e_{\boldsymbol{n}}(\bt)\coloneq e\bigg(\sum_{\ell=1}^d n_\ell t_\ell\bigg),\quad \forall \bt=(t_1,t_2,\ldots,t_d)\in \R^d \an \boldsymbol{n}\in \Z^d.
\]
The Fourier transform of a function $f\in L^1(\T^d)$ is defined by\footnote{We remind the reader that our definition of the Fourier transform differs slightly from the conventional one, in which $\widehat{f}(\boldsymbol{n})=\int_{\T^d} f(\bz)\bar{\bz}^{\boldsymbol{n}}\,\boldsymbol{m}(\mathrm{d}\bz)$. We choose this different convention omitting the complex conjugate for simplicity of notation.}
\[
\widehat{f}(\boldsymbol{n})\coloneq \int_{\T^d} f(\bz)\bz^{\boldsymbol{n}}\,\boldsymbol{m}(\mathrm{d}\bz)=\int_{[-1/2,1/2)^d}f(\mathcal{E}(\bt))e_{\boldsymbol{n}}(\bt)\,\mathrm{d}\bt,\quad \forall \boldsymbol{n}\in \Z^d.
\]

\subsection{Key technical tools}

\subsubsection{Mollification of random processes}

Our construction of the log-correlated Gaussian field with smooth decomposition relies on a mollification process by convolution. This is a standard method in deterministic case.
\begin{lemma}\label{lemma:convolution-deterministic}
	Let $f\in L^1(\T^d)$ and $P\in C^k(\T^d)$ for some $k\in \N\cup\{\infty\}$. Then the convolution of $f$ and $P$
	\[
	(P*f)(\bz)=\int_{\T^d} P(\bz\bar{\bw})f(\bw)\,\boldsymbol{m}(\mathrm{d}\bw),\quad \forall \bz \in \T^d
	\]
	is a $C^k$-function and for any multi-index $\boldsymbol{\alpha}$ with $|\boldsymbol{\alpha}|\le k$, we have
	\begin{equation}\label{eq:diff-convolution}
	D^{\boldsymbol{\alpha}}(P*f)(\bz)=((D^{\boldsymbol{\alpha}} P)*f)(\bz),\quad \forall \bz\in \T^d.
	\end{equation}
\end{lemma}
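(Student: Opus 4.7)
The plan is to reduce the statement to classical differentiation under the integral sign on $\R^d$ via the parameterization $\mathcal{E}$, after which the result follows by a straightforward induction on the order of the multi-index $\boldsymbol{\alpha}$.

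First I would unwind the definitions. Setting $\widetilde{P}(\bt)\coloneq P(\mathcal{E}(\bt))$ and $\widetilde{f}(\bs)\coloneq f(\mathcal{E}(\bs))$, both regarded as $\Z^d$-periodic functions on $\R^d$, the convolution rewrites as
\[
(P*f)(\mathcal{E}(\bt))=\int_{[-1/2,1/2)^d}\widetilde{P}(\bt-\bs)\widetilde{f}(\bs)\,\dd\bs,
\]
where the difference $\bt-\bs$ is interpreted modulo $\Z^d$ (which is harmless by periodicity). Since $P\in C^k(\T^d)$ and $\T^d$ is compact, every partial derivative $\partial^{\boldsymbol{\beta}}\widetilde{P}$ with $|\boldsymbol{\beta}|\le k$ is bounded on $\R^d$, say by $M_{\boldsymbol{\beta}}$. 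Combined with $\widetilde{f}\in L^1([-1/2,1/2)^d)$, this provides the uniform, integrable majorant $M_{\boldsymbol{\beta}}|\widetilde{f}(\bs)|$ needed to justify differentiation under the integral sign.

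Next I would prove \eqref{eq:diff-convolution} by induction on $|\boldsymbol{\alpha}|$. The base case $|\boldsymbol{\alpha}|=0$ is trivial. For the inductive step, assuming the identity up to order $|\boldsymbol{\alpha}|-1$, I fix a coordinate $i$ and apply the standard lemma on differentiation under the integral sign: for each fixed $\bs$, the map $\bt\mapsto (\partial^{\boldsymbol{\alpha}-e_i}\widetilde{P})(\bt-\bs)$ is $C^1$ in $t_i$ with derivative $(\partial^{\boldsymbol{\alpha}}\widetilde{P})(\bt-\bs)$, and the latter is dominated uniformly in $\bt$ by $M_{\boldsymbol{\alpha}}|\widetilde{f}(\bs)|\in L^1(\dd\bs)$. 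Hence the partial derivative passes through the integral, completing the induction. Translating back via $\mathcal{E}$ gives the stated identity $D^{\boldsymbol{\alpha}}(P*f)=(D^{\boldsymbol{\alpha}}P)*f$ on $\T^d$.

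Finally I would verify that $P*f\in C^k(\T^d)$ by checking continuity of each derivative up to order $k$. For $|\boldsymbol{\alpha}|\le k$, the function $D^{\boldsymbol{\alpha}}P$ is continuous on the compact set $\T^d$, hence uniformly continuous. Given $\bz_n\to \bz$ in $\T^d$, uniform continuity yields $\sup_{\bw}|D^{\boldsymbol{\alpha}}P(\bz_n\bar\bw)-D^{\boldsymbol{\alpha}}P(\bz\bar\bw)|\to 0$, so by dominated convergence $((D^{\boldsymbol{\alpha}}P)*f)(\bz_n)\to ((D^{\boldsymbol{\alpha}}P)*f)(\bz)$. This shows $D^{\boldsymbol{\alpha}}(P*f)$ is continuous and finishes the proof. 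No step presents a real obstacle here; the only point requiring mild care is the bookkeeping between the multiplicative structure on $\T^d$ and the additive structure on its fundamental domain, which is absorbed into the $\Z^d$-periodicity of $\widetilde{P}$.
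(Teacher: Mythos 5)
Your argument is correct and is essentially the same approach the paper relies on: the paper simply cites Brezis, Proposition~4.20 for the $\R^d$ case and remarks that the proof transfers verbatim to $\T^d$, while you spell out that standard proof (pull back via $\mathcal{E}$, differentiate under the integral with the uniform bound on $\partial^{\boldsymbol{\beta}}\widetilde{P}$ supplied by compactness of $\T^d$, induct on $|\boldsymbol{\alpha}|$, and check continuity by uniform continuity plus dominated convergence). Nothing is missing; you have just written out explicitly the details that the paper delegates to the reference.
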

\begin{proof}
	The proof of $\R^d$ version can be found in~\cite[Proposition~4.20]{Bre11}. The same proof applies verbatim to $\T^d$, since the argument relies only on local properties.
\end{proof}

Now, we can extend this result to Gaussian processes with continuous paths.
\begin{lemma}\label{lemma:mollification-Gaussian-process}
	Let $(\xi(\bz))_{\bz\in \T^d}$ be a Gaussian process with continuous paths almost surely.
	For any $k\in \N\cup\{\infty\}$ and $P\in C^k(\T^d)$, define
	\[
	\psi(\bz)\coloneq (P*\xi)(\bz)=\int_{\T^d} P(\bw)\xi(\bz\bar{\bw})\,\boldsymbol{m}(\mathrm{d}\bw),\quad \forall \bz\in \T^d.
	\]
	Then the Gaussian process $(\psi(\bz))_{\bz\in \T^d}$ has $C^k$-paths almost surely and its covariance kernel  is  given by
	\[
	\E[\psi(\bz_1)\psi(\bz_2)]=\iint_{\T^d\times \T^d} P(\bw_1)P(\bw_2)\E\big[\xi(\bz_1\bar{\bw}_2)\xi(\bz_2\bar{\bw}_2)\big]\,\boldsymbol{m}(\mathrm{d}\bw_1)\boldsymbol{m}(\mathrm{d}\bw_2).
	\]
\end{lemma}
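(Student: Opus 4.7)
The plan is to split the argument into a pathwise step handling well-definedness and $C^k$-regularity, followed by a probabilistic step establishing Gaussianity and the covariance formula.

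First, I would fix an event $\Omega_0$ of full probability on which $\bz \mapsto \xi(\bz,\omega)$ is continuous, hence bounded, on the compact torus $\T^d$. For $\omega \in \Omega_0$, the integrand $\bw \mapsto P(\bw)\xi(\bz\bar\bw,\omega)$ is continuous and therefore integrable against $\boldsymbol{m}$, so $\psi(\bz,\omega)$ is well-defined pointwise. Since $\xi(\cdot,\omega) \in L^1(\T^d)$ on $\Omega_0$, a pathwise application of Lemma~\ref{lemma:convolution-deterministic} gives $\psi(\cdot,\omega) \in C^k(\T^d)$ together with the commutation identity $D^{\boldsymbol{\alpha}}\psi(\cdot,\omega) = (D^{\boldsymbol{\alpha}}P) * \xi(\cdot,\omega)$ for every multi-index with $|\boldsymbol{\alpha}| \le k$. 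This settles the regularity claim.

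To establish that $\psi$ is a Gaussian process, I would approximate the defining integral by Riemann sums $S_N(\bz) \coloneq \sum_j P(\bw_{N,j})\xi(\bz\bar{\bw}_{N,j})\boldsymbol{m}(\boldsymbol{I}_{N,j})$ over a sequence of partitions $\{\boldsymbol{I}_{N,j}\}_j$ of $\T^d$ with mesh tending to zero and sample points $\bw_{N,j} \in \boldsymbol{I}_{N,j}$. For each fixed partition, $S_N(\bz)$ is a finite linear combination of centered Gaussians, and for any finite collection $\bz_1,\ldots,\bz_n$ the vector $(S_N(\bz_1),\ldots,S_N(\bz_n))$ is jointly Gaussian. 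Using a common partition for all $n$ evaluation points and invoking the pathwise uniform continuity of $\xi$ on the compact torus together with dominated convergence, I would show that $(S_N(\bz_1),\ldots,S_N(\bz_n)) \to (\psi(\bz_1),\ldots,\psi(\bz_n))$ almost surely; since a distributional limit of jointly Gaussian vectors is jointly Gaussian, $\psi$ is a Gaussian process.

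Finally, for the covariance identity I would apply Fubini's theorem to exchange expectation with the double integral defining $\psi(\bz_1)\psi(\bz_2)$. Absolute integrability is verified via the Cauchy--Schwarz estimate $\E[|\xi(\bz_1\bar{\bw}_1)\xi(\bz_2\bar{\bw}_2)|] \le \sqrt{\E[\xi(\bz_1\bar{\bw}_1)^2]\,\E[\xi(\bz_2\bar{\bw}_2)^2]}$, which is uniformly bounded because $\bz \mapsto \E[\xi(\bz)^2]$ is continuous on the compact torus---itself a consequence of sample-path continuity combined with Fernique's theorem for continuous Gaussian fields. The main obstacle, modest rather than severe, is ensuring joint convergence of the Riemann sums required for joint Gaussianity; this is handled cleanly by working with a common partition and exploiting uniform continuity of $\xi$ on $\T^d$.
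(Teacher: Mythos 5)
Your proof is correct and follows essentially the same route as the paper: both use Lemma~\ref{lemma:convolution-deterministic} pathwise to obtain the $C^k$-regularity of $\psi(\cdot,\omega)$, and both establish Gaussianity by approximating the convolution integral with Riemann sums of the continuous Gaussian process $\xi$ and passing to the almost-sure limit. The only stylistic difference is that the paper first applies a linearity reduction---observing that for fixed $\alpha_k,\bz_k$ the map $\bw\mapsto\sum_k\alpha_k\xi(\bz_k\bar{\bw})$ is itself a continuous Gaussian process, so it suffices to verify that $(P*\xi)(\boldsymbol{1})$ is a single Gaussian random variable---whereas you approximate the finite-dimensional vectors $(\psi(\bz_1),\ldots,\psi(\bz_n))$ simultaneously over a common partition; you also spell out the Fubini/Cauchy--Schwarz justification for the covariance identity, which the paper treats as routine and omits.
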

\begin{proof}
	It suffices to show that $((P*\xi)(\bz))_{\bz\in \T^d}$ is a Gaussian process. Let $\bz_1,\bz_2,\ldots,\bz_n\in \T^d$ and we shall show that
	\[
	\sum_{k=1}^n \alpha_k (P*\xi)(\bz_k)=\int_{\T^d} P(\bw)\cdot \bigg(\sum_{k=1}^n \alpha_k \xi(\bz_k\bar{\bw} )\bigg)\,\boldsymbol{m}(\mathrm{d}\bw)
	\]
	is still a Gaussian random variable for any $\alpha_1,\alpha_2,\ldots,\alpha_n\in \R$. But now, $(\sum_{k=1}^n \alpha_k \xi(\bz_k\bw))_{\bw\in \T^d}$ forms a new continuous Gaussian process on $\T^d$. Therefore, it is sufficient to prove that $(P*\xi)(\boldsymbol{1})$ is Gaussian.
	
	Let $\mathcal{P}_n$ be a partition of $[-1/2,1/2)^d$ into cubes of side length $1/n$. For each $\boldsymbol{I}\in \mathcal{P}_n$, we arbitrary pick a point $\bw_{\boldsymbol{I}}\in \mathcal{E}(\boldsymbol{I})$. Denote the volume of $\boldsymbol{I}$ in Lebesgue measure by $\operatorname{Vol}(\boldsymbol{I})$. Then the Riemann sum
	\[
	\sum_{\boldsymbol{I}\in \mathcal{P}_n} P(\bw_{\boldsymbol{I}})\xi(\boldsymbol{1}\cdot \bar{\bw}_{\boldsymbol{I}})\cdot \operatorname{Vol}(\boldsymbol{I})=n^{-d}\sum_{\boldsymbol{I}\in \mathcal{P}_n} P(\bw_{\boldsymbol{I}})\xi(\boldsymbol{1}\cdot\bar{\bw}_{\boldsymbol{I}})
	\]
	converges to $(P*\xi)(\boldsymbol{1})$ as $n\to \infty$. Since $\xi(\bz)$ is a Gaussian process, the Riemann sum is a Gaussian random variables. Therefore, the limit random variable $(P*\xi)(\boldsymbol{1})$ is Gaussian.
\end{proof}

\subsubsection{Pisier's martingale type inequality}\label{sssec:Pisier-mar-p-ineq}

Our global estimates rely on a powerful result from the theory of Banach space geometry, Pisier's martingale type inequality, which allows us to control the norm of a vector-valued martingale by the sum of the norms of its increments.

We shall use the following well-known fact in the theory of Banach space geometry (see~\cite[Proposition~10.36 and Definition~10.41]{Pis16}): 
\begin{center}
	{\it  For any  $2\le q<\infty$, the Banach space $\ell^q$ has martingale type $p$ for all $1<p\le 2$. }
\end{center}
More precisely,  for any  $1<p\le 2\le q<\infty$, there exists a constant $C(p, q)>0$ such that any $\ell^q$-vector-valued martingale $(F_m)_{m\ge 0}$ in $L^p(\PP; \ell^q)$ satisfies
\begin{align}\label{eq:def-Mtype}
	\E [\| F_m\|_{\ell^q}^p] \le C(p,q) \sum_{k=0}^m \E[\|F_k- F_{k-1}\|_{\ell^q}^p],
\end{align}
with the convention $F_{-1}\equiv 0$. 	The inequality \eqref{eq:def-Mtype} implies in particular that for any family of independent and  centered  $\ell^q$-valued random variables $(G_k)_{k=0}^m$  in $L^p(\PP; \ell^q)$, 
\begin{align}\label{eq:def-ind-Mtype}
	\E\Big[\Big\|\sum_{k=0}^m G_k\Big\|_{\ell^q}^p\Big] \le C(p,q)\sum_{k =0}^m \E[\|G_k \|_{\ell^q}^p].  
\end{align} 

For further reference, note that when $1<p\le 2$, if $(T_m)_{m\ge 0}$ is a scalar martingale, the inequality \eqref{eq:def-Mtype} follows from the classical Burkholder's martingale inequality and reads as 
\[
\E[|T_m|^p]\le C(p)\sum_{k=0}^m\E[|T_k-T_{k-1}|^p],
\]
with the convention $T_{-1}\equiv 0$.  Moreover,  the inequality \eqref{eq:def-ind-Mtype} implies that for any family of independent and  centered random variables $(Y_k)_{k=0}^m$  in $L^p(\PP)$, 
\begin{align}\label{eq:def-ind-Mtype-scalar}
	\E\Big[\Big|\sum_{k=0}^m Y_k \Big|^p\Big]\le C(p)\sum_{k=0}^m \E[|Y_k|^p].  
\end{align} 

\subsubsection{Smooth partition of unity on $\T^d$}\label{sssec:pou}

Our local analysis requires a smooth localization tool that is adapted to the dyadic scaling. This is achieved by the following smooth partition of unity, whose construction is detailed in Appendix~\ref{appendix:pou}.
\begin{lemma}[Smooth partition of unity on $\T^d$]\label{lemma:pou-scaling-Td}
	Let $d\ge 1$ be an integer. There exists a sequence of $C^\infty(\T^d)$-functions $\{\varphi_k\}_{k\in \N\setminus\{0\}}$ such that for any $k\ge 1$, the following properties hold:  
	\begin{itemize}
		\item[(1)] The translations of $\varphi_k$ form a partition of unity on $\T^d$, that is,
		\begin{equation}\label{eq:pou-Td-euclidean-coordinate}
			\sum_{\boldsymbol{I}\in \mathscr{D}_k} \varphi_{k}(\mathcal{E}(\bt-\boldsymbol{c_I}))=1,\quad \forall \bt\in [-1/2,1/2)^d.
		\end{equation}
		\item[(2)] The support of $\varphi_k$ is contained in $\mathcal{E}([-2^{-k},2^{k}]^d)\coloneq \{\mathcal{E}(\bt):\bt\in [-2^{-k},2^{k}]^d \}$.
		\item[(3)] For each $\boldsymbol{\alpha}\in \N^d$, the derivatives of $\varphi_k$ satisfy the scaling property
		\[
		\|D^{\boldsymbol{\alpha}} \varphi_k\|_{L^\infty(\T^d)}\le C(|\boldsymbol{\alpha}|) 2^{k|\boldsymbol{\alpha}|},
		\]
		where the constant $C(|\boldsymbol{\alpha}|)$ is independent of $k$.
	\end{itemize}
\end{lemma}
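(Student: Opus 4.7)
The plan is to construct $\varphi_k$ from a single fixed smooth bump on $\R$ via tensor product and dyadic rescaling, and then transport the construction to $\T^d$ through the parameterization $\mathcal{E}$. In this way the three properties reduce to one-dimensional facts plus a careful indexing of the cube centers.

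First, I would fix a smooth even bump $\rho \in C_c^\infty(\R)$ supported in $(-1/2, 1/2)$ with $\int \rho = 1$ and set $\chi := \rho * \mathbf{1}_{[-1/2,1/2]}$. This standard mollification produces $\chi \in C^\infty(\R)$ with $\supp(\chi) \subset [-1,1]$, and it automatically inherits the partition-of-unity identity $\sum_{n\in\Z}\chi(s-n)=1$, because $\sum_n \mathbf{1}_{[-1/2,1/2]}(s-n)=1$ a.e.\ and convolution with the unit-mass kernel $\rho$ preserves this constant function. I would then form the tensor product and rescale dyadically, setting
\[
\Psi_k(\bt) \;:=\; \prod_{i=1}^d \chi(2^k t_i).
\]
This is $C^\infty$, supported in $[-2^{-k},2^{-k}]^d$, and satisfies $\sum_{\boldsymbol{n}\in\Z^d}\Psi_k(\bt - 2^{-k}\boldsymbol{n})=1$ for every $\bt\in\R^d$. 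For $k\ge 1$ the support sits strictly inside $(-1/2,1/2)^d$, so $\Psi_k$ descends unambiguously through $\mathcal{E}$ to a well-defined element $\varphi_k \in C^\infty(\T^d)$ via $\varphi_k(\mathcal{E}(\bt)) := \Psi_k(\bt)$.

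With this construction, properties (2) and (3) come essentially for free. The support statement is inherited directly from $\Psi_k$, and the chain rule gives $(D^{\boldsymbol{\alpha}}\varphi_k)(\mathcal{E}(\bt)) = 2^{k|\boldsymbol{\alpha}|}(\partial^{\boldsymbol{\alpha}}\Psi)(2^k\bt)$ with $\Psi(\bs):=\prod_i\chi(s_i)$, so one may take $C(|\boldsymbol{\alpha}|) := \|\partial^{\boldsymbol{\alpha}}\Psi\|_{L^\infty(\R^d)}$, independent of $k$. For property (1), the set of centers $\{\boldsymbol{c_I}\}_{\boldsymbol{I}\in\mathscr{D}_k}$ is a single system of representatives of the cosets of $2^{-k}\Z^d/\Z^d$ in $[-1/2,1/2)^d$, shifted by the half-cube offset $2^{-k-1}\boldsymbol{1}$, so the finite sum $\sum_{\boldsymbol{I}\in\mathscr{D}_k}\varphi_k(\mathcal{E}(\bt-\boldsymbol{c_I}))$ reindexes, modulo $\Z^d$, to the full lattice sum $\sum_{\boldsymbol{n}\in\Z^d}\Psi_k(\bt - 2^{-k}\boldsymbol{n} - 2^{-k-1}\boldsymbol{1})$, which equals $1$ by the one-dimensional identity applied coordinate-wise.

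The only genuine obstacle is the bookkeeping in property (1): one must carefully match the finite sum indexed by $\mathscr{D}_k$ with a $\Z^d$-lattice sum on $\R^d$, ensuring the support of $\Psi_k$ is small enough (which requires $k\ge 1$) that the periodization introduces no overlap, and tracking the half-integer shift separating the lattice $2^{-k}\Z^d$ from the actual centers of the dyadic cubes. Everything else is routine mollification and scaling.
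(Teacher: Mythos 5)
Your construction is correct and follows essentially the same route as the paper: build a one-dimensional smooth partition of unity on $\R$, rescale it dyadically, transport to $\T$ via $\mathcal{E}$, and tensorize to $\T^d$, with properties (2) and (3) following from the support and scaling of the fixed profile. The only variation is the source of the 1D ingredient — you mollify the indicator $\mathbf{1}_{[-1/2,1/2]}$ against a unit-mass bump, so $\sum_n\chi(\cdot-n)=1$ is automatic, whereas the paper normalizes a compactly supported bump $\vartheta$ by its periodization $W=\sum_h\vartheta(\cdot-h)$; these are interchangeable and yield the same downstream argument.
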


Define $\varphi_{\boldsymbol{I}}(\mathcal{E}(\bt))\coloneq\varphi_k(\mathcal{E}(\bt-\boldsymbol{c_I}))$ for each $\boldsymbol{I}\in \mathscr{D}_k$. Then \eqref{eq:pou-Td-euclidean-coordinate} can be rewritten as
\[
\sum_{\boldsymbol{I}\in \mathscr{D}_k} \varphi_{\boldsymbol{I}}(\bt)=1,\quad \forall \bt\in \T^d,
\]
and the support of $\varphi_{\boldsymbol{I}}$
\[
\supp \varphi_{\boldsymbol{I}}\subset \mathcal{E}(2\boldsymbol{I})\coloneq \{\mathcal{E}(\bt):\bt\in 2\boldsymbol{I}\}.
\]
Since 
\[
[D^{\boldsymbol{\alpha}}\varphi_k(\mathcal{E}(\cdot-\bs))](\bt)=(D^{\boldsymbol{\alpha}}\varphi_k)(\mathcal{E}(\bt-\bs)),\quad \forall \bt,\bs\in \T^d,
\]
we have $\|D^{\boldsymbol{\alpha}}\varphi_{\boldsymbol{I}}\|_{L^\infty(\T^d)}=\|D^{\boldsymbol{\alpha}}\varphi\|_{L^\infty(\T^d)}$. Hence the scaling property of the derivatives of $\varphi_{\boldsymbol{I}}$ is also satisfied, that is,
\[
\|D^{\boldsymbol{\alpha}} \varphi_{\boldsymbol{I}}\|_{L^\infty(\T^d)}\le C(|\boldsymbol{\alpha}|) 2^{k|\boldsymbol{\alpha}|}.
\]

\section{Existence of log-correlated Gaussian field with smooth decomposition}\label{sec:smooth-decomposition}

In this section, we construct the sequence of smooth Gaussian processes that forms the foundation of our work. Let $(\psi_j(\bz))_{\bz\in \T^d}$ be a sequence of stationary, centered Gaussian processes, indexed by $j=1, 2, \ldots$. We define their covariance kernel as
\[
  K_j(\bz, \bw) \coloneq \E[\psi_j(\bz) \psi_j(\bw)].
\]
Due to stationarity, this kernel depends only on the product $\bz\bar{\bw}$. For convenience in later arguments, we also define the associated one-variable function
\[
  \mathcal{K}_j(\bz) \coloneq K_j(\bz, \boldsymbol{1}).
\]
The following proposition establishes the existence of a sequence of processes whose covariance kernels satisfy a crucial combination of regularity properties (such as smoothness of paths and uniform derivative bounds) and structural properties (such as finite dependence and a logarithmic covariance sum).
\begin{proposition}\label{prop:smooth-decomposition-log-correlated-field}
There exists a sequence of independent, stationary, centered Gaussian processes, denoted by $(\psi_j(\bz))_{\bz \in \T^d}$ for each $j\in \N\setminus\{0\}$, whose associated covariance kernels $\mathcal{K}_j$ and functions $K_j$ (as defined above) satisfy the following properties:
\begin{enumerate}
    \item[(P0)] (Smoothness). The sample paths of all the Gaussian processes $(\psi_j(\bz))_{\bz\in \T^d}$ belong to $C^\infty(\T^d)$ almost surely.

    \item[(P1)] (Finite dependence). If $d_{\T^d}(\bz, \bw) \ge 3 \cdot 2^{-j}$, then the kernel $K_j(\bz, \bw) = 0$.
    
    \item[(P2)] (Sum to log-correlation). The sum of the covariance kernels is of logarithmic type:
    \[
    \sum_{j=1}^{\infty} K_j(\bz, \bw) = \log_+ \frac{1}{d_{\T^d}(\bz, \bw)} + g(\bz\bar{\bw}),    \quad \text{with $g \in C(\T^d)$.}
    \]

    \item[(P3)] (Limiting variance).  The variances of the processes, given by $\mathcal{K}_j(\mathbf{1})$, have the limit:
    \[
    \lim_{j \to \infty} \mathcal{K}_j(\boldsymbol{1}) =\lim\limits_{j\to \infty}\mathbb{E}[\psi_j(\bt)^2]=\log 2.
    \]
    
    \item[(P4)] (Uniform Derivative Bounds). There exists an increasing function $\Theta: (0, \infty) \to [1, \infty)$ such that for all $p > 0$,
    \begin{align}\label{incr-p-moment}
        \sup_{j} \sup_{|\boldsymbol{\alpha}| \le d} \sup_{\bz\in \T^d} \frac{\E[|D^{\boldsymbol{\alpha}}\psi_j(\bz)|^p]}{j^{2|\boldsymbol{\alpha}|p}2^{j|\boldsymbol{\alpha}|p}} \le \Theta(p).
    \end{align}
\end{enumerate}
\end{proposition}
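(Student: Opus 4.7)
The plan is to construct each $\psi_j$ by smearing an independent white noise on $\T^d$ against a $C^\infty$ kernel of characteristic scale $2^{-j}$, obtained by dyadic rescaling of a single fixed radial bump. The normalizations are chosen so as to produce the exact value $\log 2$ in (P3) and the exact finite-dependence radius $3\cdot 2^{-j}$ in (P1). Once the construction is set, (P0), (P1), (P3), (P4) follow from standard Gaussian scaling, while (P2) reduces to a telescoping estimate for the logarithmic asymptotics near $\boldsymbol{1}$.

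For the construction, I would fix a radial $C^\infty$ function $\phi\colon \R^d\to\R$ supported in $\overline{B}(\boldsymbol{0},3/4)$ with $\|\phi\|_{L^2(\R^d)}^2=\log 2$, and set $f\coloneq \phi*\phi$. Since $\phi$ is radial, $f$ is radial, $C^\infty$, supported in $\overline{B}(\boldsymbol{0},3/2)$, with $f(\boldsymbol{0})=\log 2$ and $\nabla f(\boldsymbol{0})=\boldsymbol{0}$. For each $j\geq 1$ define the scaled torus kernel
\[
P_j(\mathcal{E}(\bt))\coloneq 2^{jd/2}\phi(2^j\bt),\qquad \bt\in[-1/2,1/2)^d,
\]
a $C^\infty$ function on $\T^d$ supported in $\overline{B}_{\T^d}(\boldsymbol{1},3\cdot 2^{-j-2})$, which fits inside the fundamental domain for every $j\geq 1$. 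On a probability space carrying independent white noises $\{W_j\}_{j\geq 1}$ on $(\T^d,\boldsymbol{m})$, set
\[
\psi_j(\bz)\coloneq\int_{\T^d}P_j(\bz\bar{\bw})\,W_j(\mathrm{d}\bw).
\]
This is a stationary centered Gaussian process with covariance $K_j(\bz,\bw)=F_j(\bz\bar{\bw})$, where $F_j\coloneq P_j*P_j$ on the torus (using that $P_j$ is real and even by radiality of $\phi$); a change of variables gives $F_j(\mathcal{E}(\bt))=f(2^j\bt)$ locally near $\boldsymbol{0}$.

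Properties (P0), (P1), (P3), (P4) are then immediate or routine. (P0) follows from $K_j\in C^\infty(\T^d\times\T^d)$ via standard Gaussian theory: each derivative process has continuous covariance and hence a continuous modification, and the modifications can be chosen compatibly across multi-indices. (P1) is the support bound $\supp F_j\subset\overline{B}_{\T^d}(\boldsymbol{1},3\cdot 2^{-j-1})\subset\overline{B}_{\T^d}(\boldsymbol{1},3\cdot 2^{-j})$. (P3) is exact, not asymptotic: $\mathcal{K}_j(\boldsymbol{1})=\|P_j\|_{L^2(\T^d)}^2=\|\phi\|_{L^2(\R^d)}^2=\log 2$ for every $j\geq 1$. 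For (P4), stationarity and the chain rule give the $\bz$-independent variance
\[
\Var(D^{\boldsymbol{\alpha}}\psi_j(\bz))=\|D^{\boldsymbol{\alpha}}P_j\|_{L^2(\T^d)}^2=2^{2j|\boldsymbol{\alpha}|}\|\partial^{\boldsymbol{\alpha}}\phi\|_{L^2(\R^d)}^2,
\]
so Gaussianity yields $\E[|D^{\boldsymbol{\alpha}}\psi_j(\bz)|^p]\leq c_p\, 2^{jp|\boldsymbol{\alpha}|}$ uniformly in $\bz$, which is stronger than the bound required in \eqref{incr-p-moment}.

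The main technical step is (P2). Away from $\boldsymbol{1}$, property (P1) ensures that only finitely many $F_j$ are non-zero at any point, so $\sum_j K_j$ is locally a finite sum of $C^\infty$ functions and automatically continuous. Near $\boldsymbol{1}$, radiality gives $\sum_{j\geq 1}F_j(\mathcal{E}(\bt))=H(|\bt|)$ with $H(r)\coloneq\sum_{j\geq 1}f(2^j r)$; writing $h(r)\coloneq H(r)-\log(1/r)$, the identity $H(r/2)=f(r)+H(r)$ yields the telescoping
\[
h(r/2)-h(r)=f(r)-\log 2=f(r)-f(\boldsymbol{0})=O(r^2),
\]
where the $O(r^2)$ bound uses $\nabla f(\boldsymbol{0})=\boldsymbol{0}$. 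Summing a geometric series gives $|h(r)-h(r/2^N)|=O(r^2)$ uniformly in $N$, so $\lim_{r\to 0^+}h(r)$ exists and $h$ extends continuously across $r=0$; combined with $d_{\T^d}(\mathcal{E}(\bt),\boldsymbol{1})=|\bt|$ in a neighborhood of $\boldsymbol{0}$ and with the local smoothness away from $\boldsymbol{1}$, this delivers the decomposition $\sum_j K_j=\log_+(1/d_{\T^d})+g$ with $g\in C(\T^d)$. The hardest point is precisely this gluing step: the local telescoping must yield exactly the coefficient $1$ in front of $\log(1/|\bt|)$, which is what forces the normalization $\|\phi\|_{L^2}^2=\log 2$ (and which simultaneously delivers (P3) on the nose). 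Once this matching is arranged, the continuity of $g$ across $\boldsymbol{1}$ and its assembly into a single function on $\T^d$ is a routine synthesis of the local and global pieces.
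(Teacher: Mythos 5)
The construction itself (white noise smeared against a dyadically rescaled radial bump, with exact normalization giving $\mathcal{K}_j(\boldsymbol{1})=\log 2$) is a clean and genuinely different route from the paper, and your treatment of (P0), (P1), (P3), (P4) is correct; in fact your (P3) is exact rather than merely asymptotic and your (P4) comes without the $j^{2|\boldsymbol{\alpha}|p}$ loss. However, the key step---the verification of (P2)---has a genuine gap, and the construction as specified will in general \emph{not} satisfy (P2).

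The problem is the passage from the telescoping estimate to the conclusion that $\lim_{r\to 0^+}h(r)$ exists. You correctly derive $h(r/2)-h(r)=f(r)-\log 2=O(r^2)$, and summing this shows that the \emph{dyadic} limit $\ell(s)\coloneq\lim_{N\to\infty}h(s/2^N)=h(s)+\sum_{m\ge 0}\bigl(f(s/2^m)-\log 2\bigr)$ exists for each fixed $s$. But this only controls the behavior of $h$ along geometric sequences $s/2^N$. The function $\ell$ satisfies $\ell(s)=\ell(s/2)$, i.e.\ it is $\log$-periodic in $s$, and nothing forces it to be constant on a fundamental interval $(R_0/2,R_0]$. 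When $\ell$ is non-constant, $h$ has no limit as $r\to 0^+$ and $g$ fails to be continuous at $\boldsymbol{1}$. To see that $\ell$ is generically non-constant, take the caricature where $f\equiv\log 2$ on $[0,1]$, decreases to $0$ on $[1,3/2]$, and vanishes beyond. For $r=2^{-k}\rho$ with $\rho\in[1,2]$ one computes $H(r)=(k-1)\log 2+f(\rho)$ and hence $h(r)=-\log 2+f(\rho)+\log\rho$, a non-constant $\log$-periodic function of $r$: it equals $0$ at $\rho=1,2$ but equals $\log(3/4)<0$ at $\rho=3/2$. Smoothing $f$ does not remove the oscillation, so for a generic $\phi$ your $\sum_j f(2^j\bt)$ equals $\log(1/\|\bt\|)$ plus a bounded but $\log$-periodic, non-convergent remainder.

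This is exactly the obstacle the paper's construction is designed to avoid. In the paper one sets $\mathcal{L}_j(\bt)=\int_1^2\Phi(2^j u\bt)\,u^{-1}\,\mathrm{d}u$ rather than $\Phi(2^j\bt)$; the continuous average over scales $u\in[1,2]$ turns the dyadic sum into the integral $\sum_{j\ge 1}\mathcal{L}_j(\bt)=\int_2^\infty\Phi(v\bt)\,v^{-1}\,\mathrm{d}v$, whose remainder $\int_2^{1/\|\bt\|}\bigl(\Phi(v\bt)-1\bigr)v^{-1}\,\mathrm{d}v$ converges continuously as $\|\bt\|\to 0$ (by $|\Phi(v\bt)-1|\lesssim(v\|\bt\|)^2$). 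No $\log$-periodic oscillation survives. So your approach needs an analogous averaging over scales (or an explicit choice of $\phi$ that forces $\ell$ constant, which is a nontrivial extra constraint), and without that the proof of (P2) does not go through. The other divergence from the paper---working directly with white noise and getting $C^\infty$ paths in one step, versus the paper's two-stage construction of a merely continuous $\xi_j$ followed by mollification at scale $j^{-2}2^{-j}$---is legitimate and would simplify the later moment estimates, but it does not help with the (P2) issue.
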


\begin{remark*}
The increasing condition on  the function $\Theta$ in \eqref{incr-p-moment} will be convenient for us later.   We may only assume that $\Theta(p)<\infty$ for all $p>0$  and drop the increasing  condition.  Indeed, if 
\[
\widetilde{\Theta}(p): = \sup_{j} \sup_{|\boldsymbol{\alpha}| \le d} \sup_{\bz\in \T^d} \frac{\E[|D^{\boldsymbol{\alpha}}\psi_j(\bz)|^p]}{j^{2|\boldsymbol{\alpha}|p}2^{j|\boldsymbol{\alpha}|p}} <\infty \quad \text{for all $p>0$},
\]
then by noting that 
$
p\mapsto (\E[|D^{\boldsymbol{\alpha}}\psi_j(\bz)|^p]/ (j^{2|\boldsymbol{\alpha}|p}2^{j|\boldsymbol{\alpha}|p}))^{1/p}
$ is increasing on $p$,  we may obtain an increasing function $\Theta$ satisfying \eqref{incr-p-moment} by setting 
\[
\Theta(p)=  \sup_{0<q\le p} \widetilde{\Theta}(q)<\infty. 
\]
\end{remark*}

\subsection{Construction of smooth processes in Proposition~\ref{prop:smooth-decomposition-log-correlated-field}}\label{ssec:construc-smooth-decom}
Let $\Phi\colon \R^d\to \R$ be a $C^\infty$ non-negative, positive definite, isotropic function, in the sense that there exists a function $f:[0,\infty)\to \R$ such that $\Phi(\bt)=f(\|\bt\|)$. Furthermore, we also require $\Phi$ to satisfy
\begin{equation}\label{eq:properties-Phi}
\supp \Phi\subset \overline{B}(\boldsymbol{0},1),\quad \Phi(\boldsymbol{0})=1 \anand C_\Phi\coloneq\sup_{\bt\in \R^d\setminus \{\boldsymbol{0}\}} \frac{|\Phi(\bt) - \Phi(\boldsymbol{0})|}{\|\bt\|^{2}}<\infty.
\end{equation}
The existence of such function was confirmed in~\cite[Lemma~3.9]{LQT25}. For any integer $b\ge 2$, define
\begin{equation}\label{eq:j-kernel-Rd}
\mathcal{L}_j(\bt)=\int_1^2 \frac{\Phi(2^j u\bt)}{u}\,\mathrm{d}u,\quad \forall \bt\in \R^d, j\in \N.
\end{equation}
Then $\mathcal{L}_j$ is a positive definite function with $\supp \mathcal{L}_j\subset \overline{B}(\boldsymbol{0},2^{-j})$ for each $j\in \N$. Moreover, by~\cite[Proposition~3.8]{LQT25}, we know that
\begin{equation}\label{eq:property-Lj}
\sum_{j=1}^\infty \mathcal{L}_j(\bt)=\log_+\frac{1}{\|\bt\|}+g_0(\|\bt\|),\quad \forall \bt\in \overline{B}(\boldsymbol{0},\sqrt{d})\setminus\{\boldsymbol{0}\},
\end{equation}
where $g_0\in C^1([0,\sqrt{d}])$. 

We will construct the desired processes in Proposition~\ref{prop:smooth-decomposition-log-correlated-field} within two steps. We first construct independent Gaussian processes $(\xi_j(\bz))_{\bz\in \T^d}$ with the ``kernel function'' $\mathcal{L}_j$ in a proper sense. In the second step, we shall  mollify $\xi_j$ via convolution with a smooth function. 
This mollification step is essential to endow the processes with the $C^\infty$-regularity required by property (P0) of Proposition~\ref{prop:smooth-decomposition-log-correlated-field}. The flowchart of our construction is exhibit in Figure~\ref{fig:construct-smooth-decom}.

\begin{figure}[htpb]
\centering
\begin{tikzpicture}[
    node distance=2.7cm and 3.5cm,
    every node/.style={font=\small, align=center},
    box/.style={draw, rounded corners, minimum width=3.5cm, minimum height=1.2cm, fill=gray!10},
    arrow/.style={-{Latex[length=3mm]}, thick, shorten >=6pt, shorten <=6pt},
	longarrow/.style={-{Latex[length=3mm]}, thick, shorten >=10pt, shorten <=20pt},
    uparrow/.style={-{Latex[length=3mm]}, thick, shorten >=6pt, shorten <=6pt},
    dashedarrow/.style={-{Latex[length=3mm]}, thick, dashed, shorten >=10pt, shorten <=20pt}
  ]

\node[box] (Lj) {$\mathcal{L}_j$\\p.d. function on $\R^d$};
\node[box, right= 4em of Lj] (Hj) {$\mathcal{H}_j$\\p.d. function on $\T^d$};
\node[box, below= of Hj] (xij) {$\xi_j$\\non-smooth process\\on $\T^d$};

\node[box, right= 12em of Hj] (Kj) {$\mathcal{K}_j$\\p.d. function on $\T^d$};
\node[box, below=of Kj] (psij) {$\psi_j$\\smooth process\\on $\T^d$};
\node[right= 1.5em of xij] (P1) {};

\draw[arrow] (Lj) -- (Hj);
\draw[arrow] (Hj) -- (xij);
\draw[dashedarrow] (Hj) -- node[above, yshift=2pt] {$\mathcal{K}_j=\mathcal{H}_j*P_j*\widetilde{P}_j$} (Kj);
\draw[longarrow] (xij) -- node[below, yshift=-2pt] {convolution with $P_j$} (psij);
\draw[uparrow] (psij) -- (Kj);

\node[draw, rounded corners, thick, dashed, fit=(Lj) (Hj) (xij), inner sep=0.2cm, label=below:{\strut \bf Step 1}] {};

\node[draw, rounded corners, thick, dashed, fit=(Kj) (psij) (P1), inner xsep=0.2cm, inner ysep=0.2cm, label=below:{\strut \bf Step 2}] {};

\end{tikzpicture}
\caption{Construction of smooth processes in Proposition~\ref{prop:smooth-decomposition-log-correlated-field} (p.d. = positive definite).}
\label{fig:construct-smooth-decom}
\end{figure}
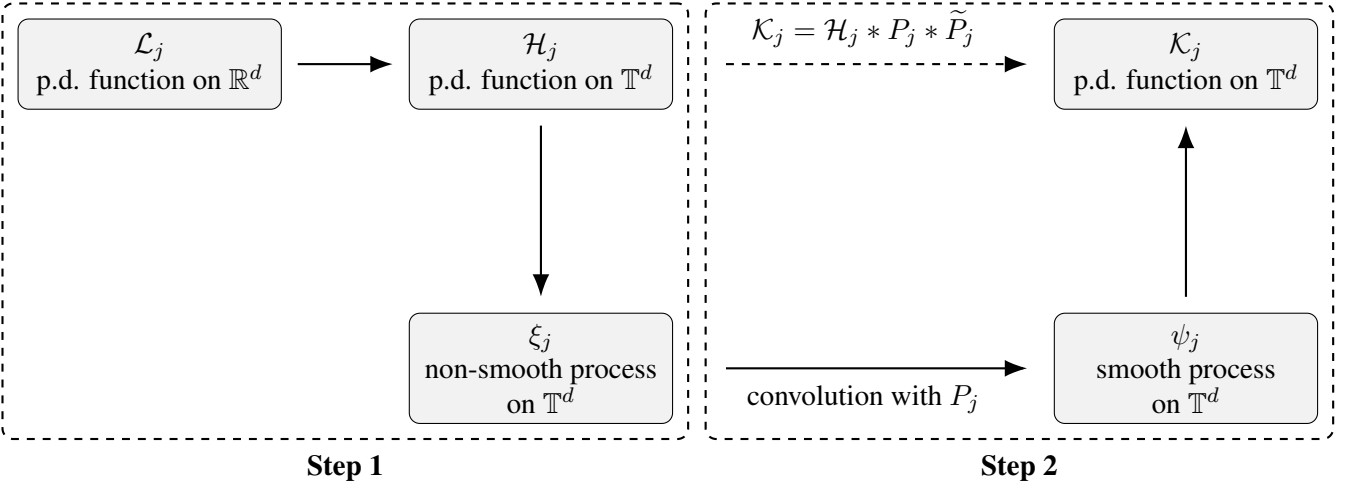

\medskip
{\flushleft \bf Step1. Construct Gaussian processes $(\xi_j(\bz))_{\bz\in \T^d}$.}
\medskip

We begin with constructing kernel functions $\mathcal{H}_j$ on $\T^d$ by $\mathcal{L}_j$. 
\begin{lemma}\label{lemma:pd-function-Rd -Td}
	Let $\mathcal{L}$ be a positive definite function on $\R^d$ with $\supp \mathcal{L}\subset [-1/2,1/2]^d$. Define 
	\[
	\mathcal{H}(\mathcal{E}(\bt))\coloneq \mathcal{L}(\bt),\quad \forall \bt\in [-1/2,1/2)^d.
	\]
	 Then $\mathcal{H}$ is a positive definite function on $\T^d$.
\end{lemma}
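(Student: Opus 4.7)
The plan is to verify positive definiteness of $\mathcal{H}$ on $\T^d$ by computing its Fourier coefficients and showing that every one of them is non-negative, which is equivalent to positive definiteness on the compact abelian group $\T^d$. The whole proof thereby reduces to a single computation linking the Euclidean Fourier transform of $\mathcal{L}$ with the Fourier series coefficients of $\mathcal{H}$.

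First, I would observe that $\mathcal{L}$ is automatically continuous (as a positive definite function on $\R^d$) and, together with its compact support $\supp\mathcal{L}\subset[-1/2,1/2]^d$, lies in $L^1(\R^d)\cap L^\infty(\R^d)$. Bochner's theorem applied to $L^1$ positive definite functions then gives the key input
\[
\widehat{\mathcal{L}}(\boldsymbol{\xi})\coloneq \int_{\R^d}\mathcal{L}(\bt)\,e^{-2\pi i\boldsymbol{\xi}\cdot\bt}\,\mathrm{d}\bt\ge 0 \quad \text{for every } \boldsymbol{\xi}\in\R^d.
\]
Moreover, since $\mathcal{L}$ is continuous and vanishes outside $[-1/2,1/2]^d$, it must vanish on $\partial[-1/2,1/2]^d$; consequently the definition $\mathcal{H}(\mathcal{E}(\bt))\coloneq\mathcal{L}(\bt)$ given on $[-1/2,1/2)^d$ extends continuously to all of $\T^d$.

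Next, I would compute the Fourier coefficients of $\mathcal{H}$ in the paper's convention: for any $\boldsymbol{n}\in\Z^d$,
\[
\widehat{\mathcal{H}}(\boldsymbol{n})=\int_{[-1/2,1/2)^d}\mathcal{L}(\bt)\,e^{2\pi i\boldsymbol{n}\cdot \bt}\,\mathrm{d}\bt=\int_{\R^d}\mathcal{L}(\bt)\,e^{2\pi i\boldsymbol{n}\cdot \bt}\,\mathrm{d}\bt=\widehat{\mathcal{L}}(-\boldsymbol{n})\ge 0,
\]
where the middle equality uses the support condition on $\mathcal{L}$ (the boundary contributes nothing to the Lebesgue integral, and the values of $\mathcal{L}$ outside $[-1/2,1/2)^d$ are zero).

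Finally, for any $\bz_1,\ldots,\bz_m\in\T^d$ and $c_1,\ldots,c_m\in\C$, I would insert the (convergent) Fourier expansion of $\mathcal{H}$ to obtain
\[
\sum_{k,\ell=1}^m c_k\bar{c_\ell}\,\mathcal{H}(\bz_k\bar{\bz_\ell})=\sum_{\boldsymbol{n}\in\Z^d}\widehat{\mathcal{H}}(\boldsymbol{n})\Big|\sum_{k=1}^m c_k\,\bar{\bz_k}^{\boldsymbol{n}}\Big|^2\ge 0,
\]
which completes the proof. No serious obstacle arises; the only mild technicality is the justification of the termwise manipulation of the Fourier expansion, which is handled either by Fej\'er--Ces\`aro summation (using the continuity of $\mathcal{H}$) or by first approximating $\mathcal{H}$ by trigonometric polynomials in $C(\T^d)$ and then passing to the limit.
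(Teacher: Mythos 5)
Your proposal is correct and follows essentially the same route as the paper: identify the Fourier coefficients of $\mathcal{H}$ with values of the Euclidean Fourier transform of $\mathcal{L}$, which are non-negative by Bochner's theorem on $\R^d$, and conclude positive definiteness on $\T^d$. The only cosmetic difference is that the paper simply invokes Bochner's theorem on $\T^d$ in the reverse direction to finish, whereas you unpack that step with a direct Fej\'er/trigonometric-polynomial argument (and you add the helpful observation that $\mathcal{L}$ vanishes on the boundary, so $\mathcal{H}$ is well defined and continuous).
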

\begin{proof}
	We will use $\mathcal{F}_{\T^d}$ and $\mathcal{F}_{\R^d}$ to distinguish Fourier transforms on $\T^d$ and $\R^d$ for the moment. Since $\mathcal{L}$ is positive definite on $\R^d$, we have $(\mathcal{F}_{\R^d}\mathcal{L})(\boldsymbol{\xi})\ge 0$ for all $\boldsymbol{\xi}\in \R^d$.
	
	By definition, for any $\boldsymbol{n}=(n_1,n_2,\ldots,n_d)\in \Z^d$, we have
	\begin{align*}
(\mathcal{F}_{\T^d}\mathcal{H})(\boldsymbol{n})&=\int_{[-1/2,1/2)^d}\mathcal{H}(\mathcal{E}(\bt))e_{\boldsymbol{n}}(\bt)\,\mathrm{d}\bt\\
	&=\int_{[-1/2,1/2)^d}\mathcal{L}(\bt)e_{\boldsymbol{n}}(\bt)\,\mathrm{d}\bt=(\mathcal{F}_{\R^d}\mathcal{L})(\boldsymbol{n})\ge 0.
	\end{align*}
	Therefore, the Bochner's theorem (see, e.g.,~\cite[Section~1.4.3]{Rud62}) implies that $\mathcal{K}$ is a positive definite function on $\T^d$.
\end{proof}

Let $\mathcal{L}_j$ be the function in \eqref{eq:j-kernel-Rd}. Define
\begin{equation}\label{eq:def-Hj}
\mathcal{H}_j(\mathcal{E}(\bt))=\mathcal{L}_j(\bt),\quad \forall \bt\in [-1/2,1/2)^d.
\end{equation}
Then, by Lemma~\ref{lemma:pd-function-Rd -Td}, $\mathcal{H}_j$ is a positive definite function on $\T^d$. 
\begin{lemma}\label{lemma:properties-Hj}
	The positive definite function $\mathcal{H}_j$ satisfies the following properties:
\begin{itemize}
	\item[(1)] For all $\bz\in \T^d$, $\mathcal{H}_j(\bz)=\mathcal{H}_j(\bar{\bz})$.
    \item[(2)] The support of $\mathcal{H}_j$ is contained in $\overline{B}(\boldsymbol{1},2^{-j})$.
    \item[(3)] The sum of $\mathcal{H}_j$ is of log-type, that is,
    \[
	\sum_{j=1}^\infty \mathcal{H}_j(\bz)=\log_+\frac{1}{\|\bz\|}+g_0(\|\bz\|),\quad \forall \bz\in \T^d\setminus\{\boldsymbol{1}\}.
	\]
\end{itemize}
\end{lemma}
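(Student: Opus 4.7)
The strategy is to read off each of the three properties from the corresponding statement about $\mathcal{L}_j$ on $\R^d$, via the bijection $\mathcal{E}:[-1/2,1/2)^d\to \T^d$, the identity $\|\mathcal{E}(\bt)\|_{\T^d}=\|\bt\|$ valid on the fundamental domain, and the defining formula $\mathcal{H}_j(\mathcal{E}(\bt))=\mathcal{L}_j(\bt)$ from \eqref{eq:def-Hj}. The three items are really corollaries of: (i) the isotropy of $\Phi$, (ii) the support property of $\mathcal{L}_j$, and (iii) the series formula \eqref{eq:property-Lj}, respectively.

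For (1), I would exploit the isotropy of $\Phi$: since $\Phi(\bt)=f(\|\bt\|)$, the formula \eqref{eq:j-kernel-Rd} immediately gives $\mathcal{L}_j(\bt)=F_j(\|\bt\|)$ for a function $F_j:[0,\infty)\to\R$; in particular $\mathcal{L}_j$ is even. For $\bz=\mathcal{E}(\bt)$ with $\bt\in(-1/2,1/2)^d$, one has $\bar{\bz}=\mathcal{E}(-\bt)$ with $-\bt$ still in the fundamental domain, so $\mathcal{H}_j(\bar{\bz})=\mathcal{L}_j(-\bt)=\mathcal{L}_j(\bt)=\mathcal{H}_j(\bz)$. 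The only boundary case occurs when some $t_i=-1/2$; because $e(1/2)=e(-1/2)$, the representative $\bt'\in[-1/2,1/2)^d$ of $\bar{\bz}$ then satisfies $\|\bt'\|=\|\bt\|$, and isotropy of $\mathcal{L}_j$ again gives $\mathcal{H}_j(\bar{\bz})=\mathcal{L}_j(\bt')=\mathcal{L}_j(\bt)=\mathcal{H}_j(\bz)$. Equivalently, $\mathcal{H}_j$ is a real positive definite function on the compact abelian group $\T^d$, so $\mathcal{H}_j(\bz^{-1})=\mathcal{H}_j(\bz)$, and $\bz^{-1}=\bar{\bz}$ on $\T^d$.

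For (2), suppose $\mathcal{H}_j(\bz)\neq 0$ and write $\bz=\mathcal{E}(\bt)$ with $\bt\in[-1/2,1/2)^d$. Then $\mathcal{L}_j(\bt)\neq 0$, so $\bt\in\supp\mathcal{L}_j\subset\overline{B}(\boldsymbol{0},2^{-j})$. Since $\|\bt\|\le 2^{-j}\le 1/2$, the point $\bt$ lies safely inside the interior of the fundamental domain, where $d_{\T^d}(\bz,\boldsymbol{1})=\|\bz\|_{\T^d}=\|\bt\|\le 2^{-j}$, placing $\bz$ in $\overline{B}_{\T^d}(\boldsymbol{1},2^{-j})$. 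For (3), given $\bz\in\T^d\setminus\{\boldsymbol{1}\}$ with unique representative $\bt\in[-1/2,1/2)^d\setminus\{\boldsymbol{0}\}$, one has $\|\bt\|\le \sqrt{d}/2<\sqrt{d}$, so \eqref{eq:property-Lj} applies termwise and delivers
\[
\sum_{j=1}^\infty \mathcal{H}_j(\bz)=\sum_{j=1}^\infty \mathcal{L}_j(\bt)=\log_+\frac{1}{\|\bt\|}+g_0(\|\bt\|)=\log_+\frac{1}{\|\bz\|_{\T^d}}+g_0(\|\bz\|_{\T^d}),
\]
which is the stated identity.

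The only mild obstacle I anticipate is the boundary case in (1), but this is neutralized cleanly by the isotropy of $\Phi$ (and hence of $\mathcal{L}_j$); once that observation is in place, each property is essentially bookkeeping built on the $\R^d$-side facts already established in \S\ref{ssec:construc-smooth-decom}.
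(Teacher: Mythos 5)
Your proposal is correct and follows essentially the same route as the paper: read each property off the corresponding fact about $\mathcal{L}_j$ via the parametrization $\mathcal{E}$ and the identity $\mathcal{H}_j(\mathcal{E}(\bt))=\mathcal{L}_j(\bt)$, using isotropy of $\Phi$, the support inclusion $\supp\mathcal{L}_j\subset\overline{B}(\boldsymbol{0},2^{-j})$, and the series identity \eqref{eq:property-Lj}. If anything, your treatment of the boundary case in (1) is slightly more complete than the paper's, which explicitly handles only the all-coordinates-$-1/2$ case, whereas your observation that any representative $\bt'$ of $\bar{\bz}$ satisfies $\|\bt'\|=\|\bt\|$ (or the alternative via real positive-definiteness) covers the mixed boundary cases as well.
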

\begin{proof}
	(1) For $\bz=(e(t_1),e(t_2),\ldots,e(t_d))\in \T^d$ with $\bt=(t_1,t_2,\ldots,t_d)\in (-1/2,1/2)^d$, we have
	\[
	\mathcal{H}_j(\bz)=\mathcal{L}_j(\bt)=\int_1^2 \frac{\Phi(2^j u\bt)}{u}\,\mathrm{d}u=\int_1^2 \frac{\Phi(2^j u(-\bt))}{u}\,\mathrm{d}u=\mathcal{L}_j(-\bt)=\mathcal{H}_j(\bar{\bz}).
	\]
	The third equality holds since $\Phi$ is isotropic.

	If $\bz=(e(-1/2),e(-1/2),\ldots,e(-1/2))=(-1,-1,\ldots,-1)$, then $\bz=\bar{\bz}$. In this case, $\mathcal{H}_j(\bz)=\mathcal{H}_j(\bar{\bz})$ holds automatically. Therefore, we have shown that $\mathcal{H}_j(\bz)=\mathcal{H}_j(\bar{\bz})$ for all $\bz\in \T^d$.

	(2) For $\bz\in \T^d$ with $d_{\T^d}(\bz,\boldsymbol{1})>2^{-j}$, the canonical representation $(t_1,t_2,\ldots,t_d)$ of $\bz$ in $[-1/2,1/2)$ satisfies
	\[
	\bigg(\sum_{j=1}^d |t_j|^2\bigg)^{1/2}> 2^{-j}.
	\]
	Combining with the fact that $\supp \mathcal{L}_j\subset \overline{B}(\boldsymbol{0},2^{-j})$, we have 
	\[
	\mathcal{H}_j(\bz)=\mathcal{L}_j(t_1,t_2,\ldots,t_d)=0.
	\]
	Therefore, the support of $\mathcal{H}_j$ lies in $\overline{B}_{\T^d}(\boldsymbol{1},2^{-j})$.

	(3) Let $\bz=(e(t_1),e(t_2),\ldots,e(t_d))\in \T^d\setminus\{\boldsymbol{1}\}$ with $\bt=(t_1,t_2,\ldots,t_d)\in [-1/2,1/2)^d$. Then
	\[
	\mathcal{H}_j(\bz)=\mathcal{L}_j(t_1,t_2,\ldots,t_d)=\mathcal{L}_j(\bt).
	\]
	Therefore, by \eqref{eq:property-Lj}, we have
	\[
	\sum_{j=1}^\infty \mathcal{H}_j(\bz)=\sum_{j=1}^\infty \mathcal{L}_j(\bt)=\log_+ \frac{1}{\|\bt\|}+g_0(|\bt|)=\log_+ \frac{1}{\|\bz\|}+g_0(\|\bz\|).
	\]
	This completes the proof.
\end{proof}

\begin{lemma}\label{lemma:Lp-Gaussian-equiv}
	If $\xi$ is a Gaussian random variable, then for any $0<p<\infty$, we have
	\[
	\big(\E[|\xi|^p]\big)^{1/p}=\sqrt{2}\cdot \bigg(\frac{\Gamma((p+1)/2)}{\sqrt{\pi}}\bigg)^{1/p}\cdot \big(\E[|\xi|^2]\big)^{1/2}.
	\]
\end{lemma}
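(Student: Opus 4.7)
The plan is to prove this identity by a direct moment computation for a centered Gaussian, noting that the statement implicitly concerns centered Gaussians (since the identity expresses $\|\xi\|_{L^p}$ solely in terms of $\|\xi\|_{L^2}$).

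First I would set $\sigma^2 \coloneq \E[|\xi|^2]$, so that $\xi$ has density $(2\pi\sigma^2)^{-1/2}\exp(-x^2/(2\sigma^2))$ on $\R$. By symmetry of the integrand and the standard definition of the $p$-th absolute moment, I write
\[
\E[|\xi|^p]=\frac{2}{\sqrt{2\pi}\,\sigma}\int_{0}^{\infty} x^{p}\exp\!\Big(-\frac{x^2}{2\sigma^2}\Big)\,\mathrm{d}x.
\]
The key step is the change of variables $u=x^2/(2\sigma^2)$, which gives $x=\sqrt{2\sigma^2\,u}$ and $\mathrm{d}x=\sigma(2u)^{-1/2}\,\mathrm{d}u$, converting the integral to a Gamma integral. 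After collecting constants one obtains
\[
\E[|\xi|^p]=\frac{(2\sigma^2)^{p/2}}{\sqrt{\pi}}\,\Gamma\!\Big(\frac{p+1}{2}\Big).
\]
Taking $p$-th roots and identifying $\sigma=(\E[|\xi|^2])^{1/2}$ then yields the desired formula.

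There is no real obstacle here; the lemma is a textbook computation. The only mild care is the substitution and the bookkeeping of constants, which is routine. (One could alternatively derive the identity by homogeneity: since $\xi/\sigma$ is a standard Gaussian, one reduces the claim to the single identity $\E[|N(0,1)|^p]=2^{p/2}\Gamma((p+1)/2)/\sqrt{\pi}$, which again is a one-line Gamma-integral computation.)
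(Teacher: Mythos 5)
Your computation is correct; the paper itself states this lemma without proof, as it is a textbook Gamma-integral identity, and your direct moment calculation (or the equivalent scaling reduction to $N(0,1)$) is exactly the standard derivation. Your parenthetical observation that the statement implicitly requires $\xi$ to be \emph{centered} is apt — the paper does apply the lemma only to centered Gaussians, so no harm arises, but strictly speaking the hypothesis as written should say ``centered Gaussian.''
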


\begin{corollary}\label{corollary:original-process}
	For each $j\in \N\setminus\{0\} $, there exists a continuous centered Gaussian process $(\xi_j(\bz))_{\bz\in \T^d}$ whose kernel function is exactly $\mathcal{H}_j$. 
\end{corollary}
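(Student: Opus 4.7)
The plan is to produce $\xi_j$ by Kolmogorov's extension theorem and to verify continuity through Kolmogorov's continuity criterion. All the needed ingredients are already on the page: positive-definiteness of $\mathcal{H}_j$ on $\T^d$ from Lemma~\ref{lemma:pd-function-Rd -Td}, the quadratic modulus of continuity of $\Phi$ at the origin from \eqref{eq:properties-Phi}, and the Gaussian moment identity of Lemma~\ref{lemma:Lp-Gaussian-equiv}.

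First I would define the candidate covariance $K_j(\bz,\bw) \coloneq \mathcal{H}_j(\bz\bar{\bw})$ on $\T^d \times \T^d$. Symmetry is immediate from property~(1) of Lemma~\ref{lemma:properties-Hj}, and positive semi-definiteness of $K_j$ as a kernel is exactly the content of Lemma~\ref{lemma:pd-function-Rd -Td}. Kolmogorov's extension theorem then yields a centered Gaussian family $(\xi_j(\bz))_{\bz\in \T^d}$ with $\E[\xi_j(\bz)\xi_j(\bw)]=K_j(\bz,\bw)$, which is exactly the claimed kernel.

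Second, I would verify the Kolmogorov continuity criterion. Since $\xi_j(\bz)-\xi_j(\bw)$ is Gaussian, its variance equals
\[
\E[(\xi_j(\bz)-\xi_j(\bw))^2]=2\bigl(\mathcal{H}_j(\boldsymbol{1})-\mathcal{H}_j(\bz\bar{\bw})\bigr)=2\bigl(\mathcal{L}_j(\boldsymbol{0})-\mathcal{L}_j(\bt)\bigr),
\]
where $\bt\in[-1/2,1/2)^d$ is the canonical representative of $\bz\bar{\bw}$, so $\|\bt\|=d_{\T^d}(\bz,\bw)$. Inserting the quadratic bound $|\Phi(\boldsymbol{0})-\Phi(\bs)|\le C_\Phi\|\bs\|^2$ from \eqref{eq:properties-Phi} into the definition \eqref{eq:j-kernel-Rd} gives, after an elementary integration in $u\in[1,2]$, an estimate of the form $|\mathcal{L}_j(\boldsymbol{0})-\mathcal{L}_j(\bt)|\lesssim 4^j\|\bt\|^2$. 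Lemma~\ref{lemma:Lp-Gaussian-equiv} then promotes this variance control to $\E[|\xi_j(\bz)-\xi_j(\bw)|^p]\le C(j,p,\Phi)\, d_{\T^d}(\bz,\bw)^p$ for every $p\ge 2$. Choosing any $p>d$ produces an exponent strictly greater than $d$, which is precisely the hypothesis of Kolmogorov's continuity theorem; this yields a continuous modification of $\xi_j$, which we declare to be our process.

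No serious obstacle is expected: once the positive-definiteness of $\mathcal{H}_j$ is in hand, the argument is essentially routine. The only mild point to watch is that the parameterization $\mathcal{E}$ identifies the Euclidean norm $\|\bt\|$ with the torus distance $d_{\T^d}(\bz,\bw)$ on the support of $\mathcal{H}_j$, which by Lemma~\ref{lemma:properties-Hj}~(2) is confined to a small neighbourhood of $\boldsymbol{1}$; this localization prevents the periodic wrapping of the torus from causing any ambiguity when expressing the increment variance through $\mathcal{L}_j$. The resulting continuous process $\xi_j$ is exactly the input required for the Step~2 mollification by convolution with $P_j$ indicated in Figure~\ref{fig:construct-smooth-decom}.
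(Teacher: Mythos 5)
Your proposal is correct and follows essentially the same path as the paper: existence of a centered Gaussian family from the positive-definiteness of $\mathcal{H}_j$, the increment variance bound $\E[|\xi_j(\bz)-\xi_j(\bw)|^2]\lesssim 2^{2j}d_{\T^d}(\bz,\bw)^2$ via the quadratic control on $\Phi$, promotion to higher moments through Lemma~\ref{lemma:Lp-Gaussian-equiv}, and Kolmogorov's continuity theorem. The paper specializes to the exponent $p=2d$ while you observe more generally that any $p>d$ works, but this is only a cosmetic difference.
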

\begin{proof}
	Let $\eta_j$ be a centered Gaussian process with kernel function $\mathcal{H}_j$.  By Kolmogorov's continuity theorem (see, e.g.,~\cite[Chapter~4, Theorem~4.23]{Kal21}), it is sufficient to verify that 
	\[
	\E[|\eta_j(\bz)-\eta_j(\bw)|^{2d}]\le C(j,d,C_\Phi) (d_{\T^d}(\bz,\bw))^{2d},\quad \forall \bz,\bw\in \T^d,
	\]
	where $C(j,d,C_\Phi)$ is a constant depending on $j$, $d$ and $C_\Phi$.

    For $\bz,\bw\in \T^d$, let $\bt=(t_1,t_2,\ldots,t_d)\in [-1/2,1/2)^d$, such that $\bz\bar{\bw}=(e(t_1),e(t_2),\ldots,e(t_d))$. Then
	\begin{align*}
		\E[|\eta_j(\bz)-\eta_j(\bw)|^2]=2\mathcal{H}_j(\boldsymbol{1})-2\mathcal{H}_j(\bz\bar{\bw})=2\mathcal{L}_j(\boldsymbol{0})-2\mathcal{L}_j(\bt).
	\end{align*}
	By the definition of $\mathcal{L}_j$ and \eqref{eq:properties-Phi}, we have
	\[
		\E[|\eta_j(\bz)-\eta_j(\bw)|^2]=2\int_1^2 \frac{\Phi(\boldsymbol{0})-\Phi(2^ju\bt)}{u}\,\mathrm{d}u\le 2C_\Phi 2^{2j}\|\bt\|^2\int_1^2 u\,\mathrm{d}u=3C_\Phi 2^{2j}\cdot (d_{\T^d}(\bz,\bw))^2.
	\]
    Therefore, by Lemma~\ref{lemma:Lp-Gaussian-equiv},
    \begin{align*}
    \E[|\eta_j(\bz)-\eta_j(\bw)|^{2d}]&= 2^d\cdot  \bigg(\frac{\Gamma((2d+1)/2)}{\sqrt{\pi}}\bigg)\cdot \bigg(\E[|\eta_j(\bz)-\eta_j(\bw)|^2]\bigg)^d\\
    &\le C_\Phi^d\cdot 6^d\cdot \bigg(\frac{\Gamma((2d+1)/2)}{\sqrt{\pi}}\bigg)\cdot 2^{2jd}\cdot (d_{\T^d}(\bz,\bw))^{2d}.
    \end{align*}
    This shows that $\eta_j$ admits a continuous version denoted by $\xi_j$.
\end{proof}

We always assume that $\{\xi_j\}_{j\in \N\setminus \{0\}}$ are independent.

\medskip
{\flushleft \bf Step2. Mollify the processes $(\xi_j(\bz))_{\bz\in \T^d}$ and calculation of kernel functions. }
\medskip

Let $Q\in C^\infty(\R^d)$ be a non-negative isotropic function with $\supp Q\subset \overline{B}(\boldsymbol{0},1)$. For each $j\in \N\setminus\{0\}$, define
\[
Q_j(\bt)=\varepsilon_j^{-d}Q\bigg(\frac{\bt}{\varepsilon_j}\bigg),\quad \forall \bt\in \R^d,\quad \text{where}~\varepsilon_j\coloneq j^{-2}2^{-j}.
\]
 The specific decay rate of $\varepsilon_j$ is carefully chosen to satisfy two distinct requirements in the subsequent proofs.
\begin{itemize}
    \item \textit{Exponential component.} The term $2^{-j}$ is chosen to match the natural scale of the $j$-th process. 
    \item \textit{Polynomial component.} The term $j^{-2}$ ensures a sufficiently rapid mollification. This allows the resulting smooth process to closely approximate the original one, a fact that is made precise in Lemma~\ref{lemma:convergence-difference}.
\end{itemize}

The mollifier $P_j \in C^\infty(\T^d)$ is defined by $P_j(\mathcal{E}(\mathbf{t})) \coloneqq Q_j(\bt)$ for all $\bt \in [-1/2, 1/2)^d$. Its support is contained within a small ball centered at the identity $\boldsymbol{1}$. For convenience, we denote this ball by $\mathcal{B}_j$, defined as
\begin{equation}\label{eq:def-mathcalBj}
	\mathcal{B}_j \coloneqq \overline{B}_{\mathbb{T}^d}(\mathbf{1}, j^{-2}2^{-j}).
\end{equation}
Thus, we can write the support condition as
\begin{equation}\label{eq:supp-Pj}
	\supp P_j \subset \mathcal{B}_j.
\end{equation}

Define
\[
\psi_j(\bz)\coloneq (P_j*\xi_j)(\bz)=\int_{\T^d} P_j(\bw)\xi_j(\bz\bar{\bw})\,\boldsymbol{m}(\mathrm{d}\bw),\quad \forall \bz\in \T^d.
\]
We claim that $\psi_j$ is the desirable process in Proposition~\ref{prop:smooth-decomposition-log-correlated-field}. 

We now compute the covariance kernel of $\psi_j$, which we denote by $K_j(\bz_1, \bz_2)$: 
\begin{align*}
K_j(\bz_1,\bz_2)\coloneq \E[\psi_j(\bz_1)\psi_j(\bz_2)]&=\iint_{\T^d\times \T^d} P_j(\bw_1)P_j(\bw_2)\E\big[\xi_j(\bz_1\bar{\bw}_1)\xi_j(\bz_2\bar{\bw}_2)\big]\,\boldsymbol{m}(\mathrm{d}\bw_1)\boldsymbol{m}(\mathrm{d}\bw_2)\\
&=\iint_{\T^d\times \T^d} P_j(\bw_1)P_j(\bw_2)\mathcal{H}_j(\bz_1\bar{\bz}_2\cdot \bar{\bw}_1\bw_2)\,\boldsymbol{m}(\mathrm{d}\bw_1)\boldsymbol{m}(\mathrm{d}\bw_2).
\end{align*}
We adopt the standard notationin in harmonic analysis:
\[
\widetilde{P}_j(\bz)\coloneq P_j(\bar{\bz}),\quad \forall \bz\in \T^d.
\]
Then we can rewrite the covariance function as
\begin{align*}
\mathcal{K}_j(\bz_1,\bz_2)&=\int_{\T^d}\widetilde{P}_j(\bar{\bw}_2)\int_{\T^d}P_j(\bw_1)\mathcal{H}_j(\bz_1\bar{\bz}_2\bw_2\cdot \bar{\bw}_1)\,\boldsymbol{m}(\mathrm{d}\bw_1)\boldsymbol{m}(\mathrm{d}\bw_2)\\
&=\int_{\T^d} \widetilde{P}_j(\bar{\bw}_2)(\mathcal{H}_j*P_j)(\bz_1\bar{\bz}_2\cdot \bw_2)\,\boldsymbol{m}(\mathrm{d}\bw_2)\\
&=\int_{\T^d} \widetilde{P}_j(\bw_2)(\mathcal{H}_j*P_j)(\bz_1\bar{\bz}_2\cdot \bar{\bw}_2)\,\boldsymbol{m}(\mathrm{d}\bw_2)\\
&=((\mathcal{H}_j*P_j)*\widetilde{P}_j)(\bz_1\bar{\bz}_2).
\end{align*}
Therefore, the kernel function of $\psi_j$ is given by
\begin{equation}\label{eq:def-Kj}
\mathcal{K}_j(\bz)\coloneq \mathcal{K}_j(\bz,\boldsymbol{1})=(\mathcal{H}_j*P_j*\widetilde{P}_j)(\bz),\quad \forall \bz\in \T^d.
\end{equation}

\subsection{Proof of Proposition~\ref{prop:smooth-decomposition-log-correlated-field}}

We need to verify the properties (P0)--(P4) in Proposition~\ref{prop:smooth-decomposition-log-correlated-field} for the process $(\psi_j(\bz))_{\bz\in \T^d}$.

\medskip
{\flushleft \bf Verification of (P0).} Since $\{\eta_j\}_{j\in \N\setminus\{0\}}$ are required to be independent, it follows that the mollified processes $\{\psi_j\}_{j\in \N\setminus\{0\}}$ are independent as well. By Lemma~\ref{lemma:mollification-Gaussian-process}, we know that the process $\psi_j(\bz)$ has $C^\infty$-path almost surely for each $j\in \N\setminus\{0\}$. Hence (P0) in Proposition~\ref{prop:smooth-decomposition-log-correlated-field} is satisfied.

\medskip
{\flushleft \bf Verification of (P1).}
To verify (P1), we should analyze the support of the function $\mathcal{K}_j$, which was derived in \S\ref{ssec:construc-smooth-decom} as $\mathcal{K}_{j}=\mathcal{H}_{j}*(P_{j}*\widetilde{P}_{j})$.

\begin{lemma}
Let $f,g\in L^2(\T^d)$. Then $\supp(f*g)\subset (\supp f)\cdot (\supp g)$, where
\[
(\supp f)\cdot (\supp g)\coloneq \{\bw_1\cdot \bw_2:\bw_1\in \supp f,\bw_2\in \supp g\}.
\]
\end{lemma}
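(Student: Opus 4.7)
The plan is to argue by contrapositive: I would show that if $\bz \in \T^d$ lies outside the product set $A \coloneq (\supp f)\cdot(\supp g)$, then $(f*g)(\bz) = 0$, and then take closures to conclude $\supp(f*g) \subset A$.

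First, I would record that $A$ is closed (in fact compact). Since $\T^d$ is compact, the closed sets $\supp f$ and $\supp g$ are compact, and $A$ is the image of the compact set $\supp f \times \supp g$ under the continuous group multiplication map $(\bw_1,\bw_2) \mapsto \bw_1\bw_2$, hence compact and in particular closed. This is the geometric input needed for the closure step at the end.

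Next, for the pointwise vanishing, I would write out the convolution as
\[
(f*g)(\bz) \;=\; \int_{\T^d} f(\bw)\,g(\bz\bar\bw)\,\boldsymbol{m}(\mathrm{d}\bw),
\]
and note that, by the measure-theoretic definition of support, $f(\bw) = 0$ for a.e.\ $\bw \notin \supp f$ and $g(\bw') = 0$ for a.e.\ $\bw' \notin \supp g$. Thus the integrand is supported (up to a null set) on $E_\bz \coloneq \{\bw \in \supp f : \bz\bar\bw \in \supp g\}$. Now if $\bz \notin A$, then $E_\bz$ is empty: for any $\bw \in \supp f$, having $\bz\bar\bw \in \supp g$ would give $\bz = \bw \cdot (\bz\bar\bw) \in (\supp f)\cdot(\supp g) = A$, a contradiction. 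Consequently $(f*g)(\bz) = 0$ for every $\bz \in \T^d \setminus A$.

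Finally, this shows $\{\bz \in \T^d : (f*g)(\bz) \ne 0\} \subset A$, and since $A$ is closed by the first step, taking closures yields $\supp(f*g) \subset A = (\supp f)\cdot(\supp g)$, as desired. There is no real obstacle here; the only minor care is to handle the a.e.\ nature of the support definition properly (so that the vanishing of the integrand on the complement of $E_\bz$ is not disturbed by null sets), which is automatic from Fubini applied to the product measure on $\T^d \times \T^d$.
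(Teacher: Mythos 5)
Your proof is correct. The paper states this lemma without proof, treating it as a standard fact, so there is no in-paper argument to compare against; your argument, showing that $(f*g)(\bz)$ vanishes pointwise for $\bz$ outside the (compact, hence closed) product set and then taking closures, is the natural one. Two small remarks. First, the closing appeal to Fubini is unnecessary: for each fixed $\bz$, the facts that $f=0$ a.e.\ outside $\supp f$ and that $\bw\mapsto\bz\bar\bw$ is a measure-preserving bijection of $\T^d$ already give that the integrand $f(\bw)g(\bz\bar\bw)$ vanishes for a.e.\ $\bw$ outside $E_\bz$; since a finite union of null sets is null, no product-measure argument is needed. Second, it is worth stating explicitly that the convolution integral converges absolutely for every $\bz$ by Cauchy--Schwarz (using $f,g\in L^2(\T^d)$), which is what makes $f*g$ a genuine continuous function and legitimizes the pointwise evaluation and the identification of $\supp(f*g)$ with the closure of $\{\bz : (f*g)(\bz)\ne 0\}$.
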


By Lemma~\ref{lemma:properties-Hj}, we have $\supp \mathcal{H}_j\subset \overline{B}(\boldsymbol{1},2^{-j})$. Since $\supp P_j\subset \mathcal{B}_j$, where $\mathcal{B}_j=\overline{B}_{\T^d}(\boldsymbol{1},j^{-2}2^{-j})$ as we define in \eqref{eq:def-mathcalBj}, we have
\[
\supp \mathcal{K}_j=\supp (\mathcal{H}_j*P_j*\widetilde{P}_j)\subset \overline{B}_{\T^d}(\boldsymbol{1},2^{-j})\cdot \overline{B}_{\T^d}(\boldsymbol{1},j^{-2}2^{-j})\cdot \overline{B}_{\T^d}(\boldsymbol{1},j^{-2}2^{-j}).
\]
By the triangle inequality, we have
\[
\overline{B}_{\T^d}(\boldsymbol{1},2^{-j})\cdot \overline{B}_{\T^d}(\boldsymbol{1},j^{-2}2^{-j})\cdot \overline{B}_{\T^d}(\boldsymbol{1},j^{-2}2^{-j})\subset \overline{B}_{\T^d}(\boldsymbol{1},(1+2j^{-2})2^{-j})\subset \overline{B}_{\T^d}(\boldsymbol{1},3\cdot 2^{-j}).
\]
Hence $\supp \mathcal{K}_j\subset \overline{B}_{\T^d}(\boldsymbol{1},3\cdot2^{-j})$ for all $j\in \N\setminus\{0\}$. Note that
\[
\mathcal{K}_j(\bz,\bw)=\mathcal{K}_j(\bz\bar{\bw}),\quad \forall \bz,\bw\in \T^d.
\]
The continuity of $\mathcal{K}_j$ implies that, if $d_{\T^d}(\bz,\bw)\ge 3\cdot2^{-j}$, then $\mathcal{K}_j(\bz,\bw)=0$. This proves (P1).

\medskip
{\flushleft \bf Verification of (P2). }
The following lemma is the key to verifying (P2) and (P3), as it establishes a summable error bound between the kernels $\mathcal{K}_j$ and $\mathcal{H}_j$, ensuring the properties of their sum are preserved.
\begin{lemma}\label{lemma:convergence-difference}
	Let $\mathcal{H}_j$ and $\mathcal{K}_j$ be the positive definite functions defined in \eqref{eq:def-Hj} and \eqref{eq:def-Kj}.
	Then we have
	 \[
	 \sum_{j=1}^\infty \|\mathcal{K}_j-\mathcal{H}_j\|_{L^\infty(\T^d)}<\infty.
	 \]
\end{lemma}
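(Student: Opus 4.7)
The plan is to exploit the fact that, under the implicit normalization $\int_{\R^d} Q = 1$, the smoothed kernel $R_j \coloneq P_j * \widetilde{P}_j$ is a non-negative smooth approximate identity on $\T^d$ with total mass $1$ and support contained in $\overline{B}_{\T^d}(\boldsymbol{1}, 2\varepsilon_j)$, where $\varepsilon_j = j^{-2}2^{-j}$. Using $\mathcal{K}_j = \mathcal{H}_j * R_j$, I would write
\[
(\mathcal{K}_j - \mathcal{H}_j)(\bz) = \int_{\T^d} R_j(\bw)\,[\mathcal{H}_j(\bz\bar{\bw}) - \mathcal{H}_j(\bz)]\,\boldsymbol{m}(\mathrm{d}\bw),
\]
which reduces the problem to a uniform modulus of continuity bound for $\mathcal{H}_j$ at scale $2\varepsilon_j$.

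Next I would establish an $O(2^j)$ Lipschitz bound for $\mathcal{H}_j$. Differentiating the representation $\mathcal{L}_j(\bt) = \int_1^2 u^{-1}\Phi(2^j u \bt)\,\mathrm{d}u$ yields
\[
\nabla \mathcal{L}_j(\bt) = 2^j \int_1^2 (\nabla \Phi)(2^j u \bt)\,\mathrm{d}u,\qquad \|\nabla \mathcal{L}_j\|_{L^\infty(\R^d)} \le 2^j \|\nabla \Phi\|_{L^\infty(\R^d)}.
\]
Because $\supp \mathcal{H}_j \subset \overline{B}_{\T^d}(\boldsymbol{1}, 2^{-j})$ and $\supp R_j$ is concentrated even more tightly around $\boldsymbol{1}$, only pairs $(\bz, \bz\bar{\bw})$ lying in a small neighborhood of $\boldsymbol{1}$ can yield a nonzero contribution to the integrand; on such a neighborhood the exponential map $\mathcal{E}$ is a local isometry, so the Euclidean Lipschitz bound transfers to a $d_{\T^d}$-Lipschitz bound for $\mathcal{H}_j$ with the same constant.

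Combining the two ingredients, for any $\bz \in \T^d$ and $\bw \in \supp R_j$ we have $d_{\T^d}(\bz\bar{\bw}, \bz) = d_{\T^d}(\bw, \boldsymbol{1}) \le 2\varepsilon_j$, hence
\[
|\mathcal{H}_j(\bz\bar{\bw}) - \mathcal{H}_j(\bz)| \le 2^j \|\nabla \Phi\|_\infty \cdot 2\varepsilon_j = \frac{2\|\nabla \Phi\|_\infty}{j^2}.
\]
Integrating against $R_j$ and using $\|R_j\|_{L^1(\T^d)} = 1$ yields $\|\mathcal{K}_j - \mathcal{H}_j\|_{L^\infty(\T^d)} \le 2\|\nabla \Phi\|_\infty \cdot j^{-2}$, which is summable in $j$ by the $p$-series test.

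The main obstacle is essentially bookkeeping: confirming that the Lipschitz comparison between $\R^d$ and $\T^d$ is legitimate by reducing to the region where $\mathcal{H}_j$ fails to vanish, and verifying the $L^1$-normalization of $R_j$. In retrospect, the polynomial prefactor $j^{-2}$ in the definition $\varepsilon_j = j^{-2}2^{-j}$ is tuned precisely so that $2^j \varepsilon_j = j^{-2}$ is a summable gain, which is the whole reason for the mollification scale selected in \S\ref{ssec:construc-smooth-decom}.
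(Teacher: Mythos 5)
Your proof is correct and follows the same overall architecture as the paper: write $\mathcal{K}_j - \mathcal{H}_j$ as an average of $\mathcal{H}_j(\bz\bar{\bw}) - \mathcal{H}_j(\bz)$ against the mass-one mollifier $R_j = P_j * \widetilde{P}_j$ supported at scale $2\varepsilon_j$, then invoke a Lipschitz bound $\mathrm{Lip}(\mathcal{H}_j) = O(2^j)$ to collect a factor $2^j \cdot \varepsilon_j = j^{-2}$, which is summable. The one genuine difference is how the Lipschitz bound is obtained: you differentiate the explicit formula $\mathcal{L}_j(\bt) = \int_1^2 u^{-1}\Phi(2^j u\bt)\,\mathrm{d}u$ directly to get $\|\nabla\mathcal{L}_j\|_\infty \le 2^j\|\nabla\Phi\|_\infty$, a pure-calculus argument relying only on $\Phi \in C^\infty$ with compact support; the paper (Lemma~\ref{lemma:Lipschitz-continuity-kernel}) instead views $\mathcal{H}_j$ as a covariance, writes $\mathcal{H}_j(\bz)-\mathcal{H}_j(\bw) = \E[\xi_j(\boldsymbol{1})(\xi_j(\bz)-\xi_j(\bw))]$, and applies Cauchy--Schwarz together with the quadratic estimate $|\Phi(\bt)-\Phi(\boldsymbol{0})| \le C_\Phi\|\bt\|^2$ from \eqref{eq:properties-Phi}. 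Your route is more elementary and does not need the constant $C_\Phi$, whereas the paper's route reuses the same variance estimate that already appears in Corollary~\ref{corollary:original-process}, keeping the proof self-contained within the probabilistic framework. Both paths are equally valid, and your observations that $\int_{\R^d} Q = 1$ is an implicit normalization and that the Euclidean-to-torus Lipschitz transfer is justified by localizing to the support of $\mathcal{H}_j$ are both accurate and worth having spelled out.
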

\begin{lemma}\label{lemma:Lipschitz-continuity-kernel}
    Let $\mathcal{H}_j$ be the kernel function in \eqref{eq:def-Hj}. Then
	\[
	|\mathcal{H}_j(\bz)-\mathcal{H}_j(\bw)|\le C 2^jd_{\T^d}(\bz,\bw),\quad \forall \bz,\bw\in \T^d,
	\]
	where $C$ is a constant independent of $j$.
\end{lemma}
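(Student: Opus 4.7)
The plan is to reduce the desired estimate to a Lipschitz bound for $\mathcal{L}_j$ on $\R^d$ (where the mean value theorem applies directly) and then transfer it to $\T^d$ via a minimum-norm lift of $\bz\bar{\bw}$.

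First, I would differentiate under the integral sign in the definition \eqref{eq:j-kernel-Rd}. The chain rule yields, for every $\bt\in\R^d$,
\[
\nabla\mathcal{L}_j(\bt)=2^j\int_1^2\nabla\Phi(2^j u\bt)\,\mathrm{d}u.
\]
Since $\Phi\in C^\infty(\R^d)$ is compactly supported, $M\coloneq\|\nabla\Phi\|_{L^\infty(\R^d)}$ is finite, and integrating over $u\in[1,2]$ gives the uniform gradient bound $\sup_{\bt\in\R^d}|\nabla\mathcal{L}_j(\bt)|\le M\,2^j$ for every $j\in\N\setminus\{0\}$.

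Next, I would pass this bound from $\R^d$ to $\T^d$ through the periodic extension $\tilde{\mathcal{L}}_j(\bt)\coloneq\mathcal{H}_j(\mathcal{E}(\bt))$, $\bt\in\R^d$. Because $\supp\mathcal{L}_j\subset\overline{B}(\boldsymbol{0},2^{-j})\subset\overline{B}(\boldsymbol{0},1/2)$, the translates $\{\mathcal{L}_j(\,\cdot-\boldsymbol{k})\}_{\boldsymbol{k}\in\Z^d}$ have pairwise disjoint interiors of support, and $\mathcal{L}_j$ vanishes to infinite order on $\partial\overline{B}(\boldsymbol{0},2^{-j})$ since it is a $C_c^\infty$ function. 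Consequently, $\tilde{\mathcal{L}}_j=\sum_{\boldsymbol{k}\in\Z^d}\mathcal{L}_j(\,\cdot-\boldsymbol{k})$ is smooth on the whole of $\R^d$, and at any point at most one summand contributes, so the gradient bound persists: $|\nabla\tilde{\mathcal{L}}_j(\bt)|\le M\,2^j$ for every $\bt\in\R^d$.

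Finally, given $\bz,\bw\in\T^d$, I would fix any lift $\bt_0\in\R^d$ of $\bz$ and a minimum-norm lift $\boldsymbol{r}^*\in\R^d$ of $\bz\bar{\bw}$, so that by the definition \eqref{eq:def-distance-Td} of $d_{\T^d}$ one has $\|\boldsymbol{r}^*\|=d_{\T^d}(\bz,\bw)$. Setting $\bs\coloneq\bt_0-\boldsymbol{r}^*$ gives $\mathcal{E}(\bs)=\bw$, whence $\mathcal{H}_j(\bz)=\tilde{\mathcal{L}}_j(\bt_0)$ and $\mathcal{H}_j(\bw)=\tilde{\mathcal{L}}_j(\bs)$. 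The mean value theorem applied to $\tilde{\mathcal{L}}_j$ along the segment $[\bt_0,\bs]$ then yields
\[
|\mathcal{H}_j(\bz)-\mathcal{H}_j(\bw)|\le M\,2^j\,\|\bt_0-\bs\|=M\,2^j\,d_{\T^d}(\bz,\bw),
\]
which is the claimed inequality with $C=M$ independent of $j$.

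The only mildly delicate step is verifying the smoothness of the periodic extension $\tilde{\mathcal{L}}_j$ in the borderline case where the support of $\mathcal{L}_j$ touches the boundary of the fundamental cube (for instance $d=1$, $j=1$); but this is settled by the standard fact that a $C_c^\infty$ function vanishes to all orders on the boundary of its support, so gluing the translates causes no loss of regularity.
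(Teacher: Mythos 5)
Your proof is correct, but it takes a genuinely different route from the paper. The paper's argument is probabilistic: it identifies $\mathcal{H}_j$ as the covariance kernel of the continuous Gaussian process $\xi_j$ from Corollary~\ref{corollary:original-process}, writes $\mathcal{H}_j(\bz)-\mathcal{H}_j(\bw)=\E[\xi_j(\boldsymbol{1})(\xi_j(\bz)-\xi_j(\bw))]$, and applies Cauchy--Schwarz together with the increment estimate $\E[|\xi_j(\bz)-\xi_j(\bw)|^2]\le 3C_\Phi 2^{2j}d_{\T^d}(\bz,\bw)^2$ already derived there; the resulting constant is $(\log 2)^{1/2}(3C_\Phi)^{1/2}$, tied to the quadratic bound $C_\Phi$ in \eqref{eq:properties-Phi}. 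Your argument is purely analytic: you differentiate the explicit formula \eqref{eq:j-kernel-Rd} under the integral sign to get the gradient bound $|\nabla\mathcal{L}_j|\le M\,2^j$ with $M=\|\nabla\Phi\|_{L^\infty}$ (finite since $\Phi\in C_c^\infty$), verify that the $\Z^d$-periodization $\tilde{\mathcal{L}}_j$ is smooth with the same gradient bound because the translates have essentially disjoint supports, and then apply the mean value theorem along a segment realizing the torus distance. Both routes are valid and yield constants independent of $j$; yours is arguably more elementary and avoids invoking the Gaussian machinery, at the small cost of having to justify the smoothness of the periodization. (That justification is actually immediate: the sum defining $\tilde{\mathcal{L}}_j$ is locally finite with $C^\infty$ summands, so the ``vanishing to all orders on the boundary'' remark, while true, is not really needed.) One side observation: the paper's probabilistic route only needs the weaker hypothesis $C_\Phi<\infty$ on $\Phi$, whereas your route uses $\nabla\Phi\in L^\infty$; in the present setting $\Phi$ is $C^\infty$ with compact support, so both hypotheses hold, but this is worth noting if one ever wanted to relax the smoothness of $\Phi$.
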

\begin{proof}
	Let $\xi_j$ be the Gaussian process in Corollary~\ref{corollary:original-process} and $\bz,\bw\in \T^d$. Then, 
	\[
	|\mathcal{H}_j(\bz)-\mathcal{H}_j(\bw)|=|\E[\xi_j(\bz)\xi_j(\boldsymbol{1})]-\E[\xi_j(\bw)\xi_j(\boldsymbol{1})]|=
	|\E[\xi_j(\boldsymbol{1})(\xi_j(\bz)-\xi_j(\bw))]|.
	\]
	By Cauchy-Schwarz inequality, we have
	\[
	|\mathcal{H}_j(\bz)-\mathcal{H}_j(\bw)|\le (\E[|\xi_j(\boldsymbol{1})|^2])^{1/2}\cdot (\E[|\xi_j(\bz)-\xi_j(\bw)|^2])^{1/2}.
	\]
	Following the same line in the proof of Proposition~\ref{corollary:original-process}, we have
	\[
	\E[|\xi_j(\bz)-\xi_j(\bw)|^2]\le 3C_\Phi 2^{2j}\cdot (d_{\T^d}(\bz,\bw))^2.
	\]
	Therefore,
	\[
	|\mathcal{H}_j(\bz)-\mathcal{H}_j(\bw)|\le (\log 2)^{1/2}\cdot (3C_\Phi )^{1/2}\cdot 2^j d_{\T^d}(\bz,\bw).
	\]
	This proves the lemma.
\end{proof}

\begin{proof}[Proof of Lemma~\ref{lemma:convergence-difference}]
	Fix $\bz=(e(t_1),e(t_2),\ldots,e(t_d))\in \T^d$. By definition,
	\begin{align*}
	\mathcal{K}_j(\bz)=(\mathcal{H}_j*P_j*\widetilde{P_j})(\bz)=\iint_{\T^d\times \T^d} P(\bw_1)P_j(\bar{\bw}_2)\mathcal{H}_j(\bz\cdot\overline{\bw_1\bw_2})\,\boldsymbol{m}(\mathrm{d}\bw_1)\boldsymbol{m}(\mathrm{d}\bw_2).
	\end{align*}
    Since $\supp P_j\subset \mathcal{B}_j$, 
    \begin{align*} 
    \big|\mathcal{K}_j(\bz)-\mathcal{H}_j(\bz)\big|&=\bigg|\iint_{\T^d\times \T^d} P(\bw_1)P_j(\bar{\bw}_2)(\mathcal{H}_j(\bz\cdot\overline{\bw_1\bw_2})-\mathcal{H}_j(\bz))\,\boldsymbol{m}(\mathrm{d}\bw_1)\boldsymbol{m}(\mathrm{d}\bw_2)\bigg|\\
    &\le \iint_{\mathcal{B}_j\times \mathcal{B}_j}P_j(\bw_1)P_j(\bar{\bw}_2)|\mathcal{H}_j(\bz\cdot\overline{\bw_1\bw_2})-\mathcal{H}_j(\bz)|\,\boldsymbol{m}(\mathrm{d}\bw_1)\boldsymbol{m}(\mathrm{d}\bw_2).
    \end{align*}
    By Lemma~\ref{lemma:Lipschitz-continuity-kernel}, we have
    \begin{align*}
   |\mathcal{H}_j(\bz\cdot \overline{\bw_1\bw_2})-\mathcal{H}_j(\bz)|&\le C2^j d_{\T^d}(\bz\cdot \overline{\bw_1\bw_2},\bz)\\
   &=C2^jd_{\T^d}(\bw_1\bw_2,\boldsymbol{1})
   \\
   & \le C2^j\cdot (2j^{-2}2^{-j})=2Cj^{-2}.
	\end{align*}
    Therefore,
	\begin{align*}
	|\mathcal{K}_j(\bz)-\mathcal{H}_j(\bz)|&\le 2Cj^{-2}\cdot \iint_{\mathcal{B}_j\times \mathcal{B}_j} P_j(\bw_1)P_j(\bar{\bw}_2)\,\boldsymbol{m}(\mathrm{d}\bw_1)\boldsymbol{m}(\mathrm{d}\bw_2)\\
	&=2Cj^{-2}\cdot \iint_{[-1/2,1/2)^d\times [-1/2,1/2)^d} Q_j(\boldsymbol{x})Q_j(-\boldsymbol{y})\,\mathrm{d}\boldsymbol{x}\mathrm{d}\boldsymbol{y}\\
	&=2Cj^{-2}\cdot \bigg(\int_{\R^d} Q_j(\boldsymbol{x})\,\mathrm{d}\boldsymbol{x}\bigg)^2=2Cj^{-2}.
	\end{align*}
   Since the choice of $\bz$ is arbitrary, we have
    \[
    \sum_{j=1}^\infty \|\mathcal{K}_j-\mathcal{H}_j\|_{L^\infty(\T^d)}\le 2C\cdot \sum_{j=1}^\infty j^{-2}<\infty.
    \]
    This completes the proof of Lemma~\ref{lemma:convergence-difference}.
\end{proof}
Define
\[
g_1(\bz)\coloneq \sum_{j=1}^\infty (\mathcal{K}_j-\mathcal{H}_j)(\bz),\quad \forall \bz\in \T^d.
\]
Then, by Lemma~\ref{lemma:convergence-difference}, the function $g_1$ is continuous on $\T^d$. By Lemma~\ref{lemma:properties-Hj}, we have
\[
\sum_{j=1}^\infty \mathcal{H}_j(\bz)=\log \frac{1}{\|\bz\|_{\T^d}}+g_0(\|\bz\|_{\T^d}),\quad \forall \bz\in \T^d\setminus\{\boldsymbol{1}\},
\]
where $g_0\in C^1([0,\sqrt{d}])$. Let $g(\bz)\coloneq g_1(\bz)+g_0(\|\bz\|_{\T^d})$, then $g$ is continuous on $\T^d$. Thus, for all $\bz\in \T^d\setminus\{\boldsymbol{1}\}$, we have
\begin{align*}
\sum_{j=1}^\infty \mathcal{K}_j(\bz)&=\sum_{j=1}^\infty (\mathcal{K}_j-\mathcal{H}_j)(\bz)+\sum_{j=1}^\infty \mathcal{H}_j(\bz)\\
&=g_1(\bz)+\log_+ \frac{1}{\|\bz\|_{\T^d}}+g_0(\|\bz\|_{\T^d})\\
&=\log_+ \frac{1}{\|\bz\|_{\T^d}}+g(\bz).
\end{align*}
This completes the verification of (P2).

\medskip
{\flushleft \bf Verification of (P3). }
By definition of $\mathcal{H}_j$ and \eqref{eq:properties-Phi}, we have
\[
\mathcal{H}_j(\boldsymbol{1})=\mathcal{L}_j(\boldsymbol{0})=\int_1^2 \frac{\Phi(\boldsymbol{0})}{u}\,\mathrm{d}u=\log 2.
\]
Since $\lim_{j\to \infty}\|\mathcal{K}_j-\mathcal{H}_j\|_{L^\infty(\T^d)}=0$, we obtain the convergence at $\boldsymbol{1}$, that is,
\[
\lim_{j\to \infty}\mathcal{K}_j(\boldsymbol{1})=\lim_{j\to \infty}\mathcal{H}_j(\boldsymbol{1})=\log 2.
\]

\medskip
{\flushleft \bf Verification of (P4). }
 Inspired by \eqref{eq:diff-convolution} in Lemma~\ref{lemma:convolution-deterministic}, it is necessary to control the $L^\infty$-norm of derivatives of $P_j$.
\begin{lemma}
	For any multi-index $\boldsymbol{\alpha}$ with $|\boldsymbol{\alpha}|\le d$ and $j\ge 1$, we have
	\[
	\|D^{\boldsymbol{\alpha}} P_j(\bz)\|_{L^\infty(\T^d)}\le j^{2(|\boldsymbol{\alpha}|+d)}2^{j(|\boldsymbol{\alpha}|+d)}\|Q\|_{C^d(\R^d)}.
	\]
\end{lemma}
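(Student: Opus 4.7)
The plan is to perform a direct scaling computation, reducing the problem on $\mathbb{T}^d$ to a derivative scaling on $\mathbb{R}^d$ via the parametrization $\mathcal{E}$. By the definition of the derivatives $D^{\boldsymbol{\alpha}}$ on $\mathbb{T}^d$ given in the Notations subsection, one has the identity
\[
(D^{\boldsymbol{\alpha}} P_j)(\mathcal{E}(\bt)) = (\partial^{\boldsymbol{\alpha}} Q_j)(\bt), \quad \forall \bt \in (-1/2,1/2)^d,
\]
since $P_j(\mathcal{E}(\bt)) = Q_j(\bt)$ by construction. Thus the problem reduces entirely to bounding $\|\partial^{\boldsymbol{\alpha}} Q_j\|_{L^\infty(\mathbb{R}^d)}$.

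Next I would apply the standard dilation identity for $Q_j(\bt) = \varepsilon_j^{-d} Q(\bt/\varepsilon_j)$, namely
\[
\partial^{\boldsymbol{\alpha}} Q_j(\bt) = \varepsilon_j^{-d-|\boldsymbol{\alpha}|}(\partial^{\boldsymbol{\alpha}} Q)(\bt/\varepsilon_j),
\]
which follows by iterated application of the chain rule. Taking the supremum in $\bt$ gives
\[
\|\partial^{\boldsymbol{\alpha}} Q_j\|_{L^\infty(\mathbb{R}^d)} \le \varepsilon_j^{-d-|\boldsymbol{\alpha}|}\, \|\partial^{\boldsymbol{\alpha}} Q\|_{L^\infty(\mathbb{R}^d)}.
\]

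Finally, I would insert the specific choice $\varepsilon_j = j^{-2} 2^{-j}$, which yields $\varepsilon_j^{-d-|\boldsymbol{\alpha}|} = j^{2(d+|\boldsymbol{\alpha}|)} 2^{j(d+|\boldsymbol{\alpha}|)}$, and use the trivial bound $\|\partial^{\boldsymbol{\alpha}} Q\|_{L^\infty(\mathbb{R}^d)} \le \|Q\|_{C^{|\boldsymbol{\alpha}|}(\mathbb{R}^d)} \le \|Q\|_{C^d(\mathbb{R}^d)}$, which is valid under the hypothesis $|\boldsymbol{\alpha}| \le d$. Combining these three steps gives the claimed bound. There is no real obstacle here; the lemma is a routine scaling estimate designed so that the polynomial factor $j^{2(d+|\boldsymbol{\alpha}|)}$ introduced by the polynomial component $j^{-2}$ of $\varepsilon_j$ will be innocuous at the later stages where this bound is applied to control $\mathbb{E}[|D^{\boldsymbol{\alpha}} \psi_j(\bz)|^p]$ and yield property (P4).
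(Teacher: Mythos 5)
Your proof is correct and follows essentially the same route as the paper: pass to the parametrization $\mathcal{E}$ to reduce to $\partial^{\boldsymbol{\alpha}}Q_j$ on $\mathbb{R}^d$, apply the dilation identity $\partial^{\boldsymbol{\alpha}}Q_j = \varepsilon_j^{-d-|\boldsymbol{\alpha}|}(\partial^{\boldsymbol{\alpha}}Q)(\cdot/\varepsilon_j)$, substitute $\varepsilon_j = j^{-2}2^{-j}$, and majorize $\|\partial^{\boldsymbol{\alpha}}Q\|_{L^\infty}$ by $\|Q\|_{C^d(\mathbb{R}^d)}$ using $|\boldsymbol{\alpha}|\le d$. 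If anything, your presentation is slightly cleaner: the paper splits the prefactor $\varepsilon_j^{-d}$ off from $Q_j$ before applying the chain rule (and in doing so writes the intermediate identity in a way that momentarily overloads the symbol $Q_j$), whereas you carry the full $\varepsilon_j^{-d-|\boldsymbol{\alpha}|}$ through in a single step, which avoids that small notational wrinkle.
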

\begin{proof}
	By the definition of $P_j$, we have
	\[
	D^{\boldsymbol{\alpha}} P_j(\bz)=j^{2d}2^{jd}D^{\boldsymbol{\alpha}} Q_j(\bt),
	\]
	where $\bt=(t_1,t_2,\ldots,t_d)$ is the canonical representation of $\bz$ in $[-1/2,1/2)^d$ and $\varepsilon_j=j^{-2}2^{-j}$. By the chain rule, we have
	\[
	\bigg(\partial^{\boldsymbol{\alpha}} Q\bigg(\frac{\cdot}{\varepsilon_j}\bigg)\bigg)(\bt)=\varepsilon_j^{-|\boldsymbol{\alpha}|}(\partial^{\boldsymbol{\alpha}} Q)\bigg(\frac{\bt}{\varepsilon_j}\bigg)=j^{2|\boldsymbol{\alpha}|}2^{j|\boldsymbol{\alpha}|}(\partial^{\boldsymbol{\alpha}} Q)\bigg(\frac{\bt}{\varepsilon_j}\bigg).
	\]
	Therefore, 
	\[
	\|D^{\boldsymbol{\alpha}} P_j(\bz)\|_{L^\infty(\T^d)}=j^{2(|\boldsymbol{\alpha}|+d)}2^{j(\boldsymbol{\alpha}|+d)}\|\partial^{\boldsymbol{\alpha}} Q\|_{L^\infty(\R^d)}\le j^{2(|\boldsymbol{\alpha}|+d)}2^{j(|\boldsymbol{\alpha}|+d)}\cdot\|Q\|_{C^d(\R^d)}.
	\]
	This proves the lemma.
\end{proof}

By Lemma~\ref{lemma:mollification-Gaussian-process}, we know
\[
(D^{\boldsymbol{\alpha}} \psi_j)(\bz)=D^{\boldsymbol{\alpha}} (P_j*\xi_j)(\bz)=((D^{\boldsymbol{\alpha}} P_j)*\xi_j)(\bz).
\]
Recalling that $\supp P_j\subset \mathcal{B}_j$ in \eqref{eq:supp-Pj}, we have
\begin{align*}
|D^{\boldsymbol{\alpha}} \psi_j(\bz)|&\le \int_{\mathcal{B}_j}\|D^{\boldsymbol{\alpha}} P_j\|_{L^\infty(\T^d)}\cdot |\xi_j(\bz)|\,\boldsymbol{m}(\mathrm{d}\bz)\\
&\le \|Q\|_{C^d(\R^d)}\cdot j^{2(|\boldsymbol{\alpha}|+d)}2^{j(|\boldsymbol{\alpha}|+d)}\cdot \int_{\mathcal{B}_j}|\xi_j(\bz)|\,\boldsymbol{m}(\mathrm{d}\bz).
\end{align*}
Let $0<p<\infty$. The $p$-moment of $D^{\boldsymbol{\alpha}}\psi_j$ can be estimated by
\[
\E[|D^{\boldsymbol{\alpha}} \psi_j(\bz)|^p]\le \|Q\|_{C^d(\R^d)}^p\cdot j^{2(|\boldsymbol{\alpha}|+d)p}2^{j(|\boldsymbol{\alpha}|+d)p}\cdot \E\bigg[\bigg|\int_{\mathcal{B}_j}|\xi_j(\bz)|\,\boldsymbol{m}(\mathrm{d}\bz)\bigg|^p\bigg].
\]
By Minkowski's inequality and Lemma~\ref{lemma:Lp-Gaussian-equiv}, we have	
\begin{align*}
\E\bigg[\bigg|\int_{\mathcal{B}_j}|\xi_j(\bz)|\,\boldsymbol{m}(\mathrm{d}\bz)\bigg|^p\bigg]&\le \bigg\{\int_{\mathcal{B}_j}\big(\E[|\xi_j(\bz)|^p]\big)^{1/p}\,\boldsymbol{m}(\mathrm{d}\bz)\bigg\}^p\\
&=2^{p/2}\bigg(\frac{\Gamma((p+1)/2)}{\sqrt{\pi}}\bigg)\cdot \bigg\{\int_{\mathcal{B}_j}\big(\E[|\xi_j(\bz)|^2]\big)^{1/2}\,\boldsymbol{m}(\mathrm{d}\bz)\bigg\}^p\\
&=2^{p/2}\bigg(\frac{\Gamma((p+1)/2)}{\sqrt{\pi}}\bigg)\cdot \bigg\{\int_{\mathcal{B}_j}\big(\mathcal{H}_j(\boldsymbol{1})\big)^{1/2}\,\boldsymbol{m}(\mathrm{d}\bz)\bigg\}^p\\
&=(2\log 2)^{p/2}\bigg(\frac{\Gamma((p+1)/2)}{\sqrt{\pi}}\bigg)\cdot (\Omega_d j^{-2d}2^{-jd})^p,
\end{align*}
where $\Omega_d$ is the volume of unit ball in $\R^d$.

Denote
\[
\Theta(p)=\max\bigg\{1,\|Q\|_{C^d(\R^d)}^p\cdot (2\log 2)^{p/2}\cdot \bigg(\frac{\Gamma((p+1)/2)}{\sqrt{\pi}}\bigg)\cdot \Omega_d^p\bigg\}.
\]
Then for all $0<p<\infty$, we have
\[
\sup_{j}\sup_{|\boldsymbol{\alpha}|\le d}\sup_{z\in \T^d}\frac{\E\big[|D^{\boldsymbol{\alpha}} \psi_j(\bz)|^p\big]}{j^{2|\boldsymbol{\alpha}|p}2^{j|\boldsymbol{\alpha}|p}}\le \Theta(p).
\]
We claim that $\Theta$ is increasing in $p$. Let $f(p)=\big(\frac{\Gamma((p+1)/2)}{\sqrt{\pi}}\big)^{1/p}$. By Lemma~\ref{lemma:Lp-Gaussian-equiv} and Lyapunov's inequality, we can see that $f(p)$ is increasing in $p$. It follows that
\[
\Theta(p)=\max\Big\{1,\big(\|Q\|_{C^d(\R^d)}\cdot (2\log2)^{1/2}\cdot \Omega_d\cdot f(p)\big)^p\Big\}
\]
is increasing in $p$ as well. Hence the constructed processes satisfy (P4).

\section{Fourier decay of Gaussian multiplicative chaos on torus}\label{sec:proof-main-thm}
This section is devoted to proving our main result, Theorem~\ref{thm:Fourier-decay-GMC}. The key to the proof is establishing a sufficiently sharp lower bound for the Fourier dimension of the GMC measure $\mu_\infty$. This is a direct result of Proposition~\ref{prop:uniform-boundedness-Fourier-Lebesgue-norm}. 

Given that the GMC measure is constructed via a scale-by-scale, multiplicative process, a standard, global Fourier analysis is inadequate to capture its fine, intrinsic structure. Therefore, our proof strategy relies on a \textit{time-frequency analysis} that is tailored to this multi-scale nature of GMC.

The core of this method is a \textit{multi-resolution strategy}, which combines scale-dependent geometric localization with fixed frequency analysis.

\begin{itemize}
\item \textit{Spatial localization}: We employ a smooth partition of unity (Lemma~\ref{lemma:pou-scaling-Td}) to create scale-adapted window functions, with support of size $\sim 2^{-k}$. This allows the ``resolution'' of our analysis to match the scale of the measure’s component
being investigated.
\item \textit{Frequency analysis}: On each localized piece, we probe its frequency content by taking inner products with the Fourier basis elements $\{\bz^{\boldsymbol{n}}\}_{\boldsymbol{n}\in \Z^d}$.
\end{itemize}

This intuitive, multi-resolution strategy is formally encapsulated in the local estimate in Proposition~\ref{prop:localization} and the global moment bound in Proposition~\ref{prop:uniform-boundedness-Fourier-Lebesgue-norm}. 

Now, we show how to derive Theorem~\ref{thm:Fourier-decay-GMC} by Proposition~\ref{prop:uniform-boundedness-Fourier-Lebesgue-norm} and Proposition~\ref{prop:smooth-decomposition-log-correlated-field}.

\begin{lemma}\label{lemma:upper-bound}
	Let $d\ge 1$ be an integer. Assume that $K$ is a Gaussian kernel on $\T^d$ of the form:
	\[
	K(\bz,\bw)=\log_+\frac{1}{d_{\T^d}(\bz,\bw)}+g(\bz,\bw),\quad \bz,\bw\in \T^d,
	\]
	where $g$ is bounded continuous on $\T^d\times \T^d$. Then for each $\gamma\in (0,\sqrt{2d})$, almost surely, we have
	\[
	\dim_F (\GMC_K^\gamma)\le D_{\gamma,d}.
	\]
\end{lemma}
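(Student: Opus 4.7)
The plan is to derive Lemma~\ref{lemma:upper-bound} from two ingredients: (i) the general comparison $\dim_F(\mu)\le \dim_2(\mu)$, valid for every finite Borel measure $\mu$ on $\T^d$, and (ii) the known identity $\dim_2(\GMC_K^\gamma)=D_{\gamma,d}$ almost surely for kernels $K$ of the form $\log_+(1/d_{\T^d}(\bz,\bw))+g(\bz,\bw)$ with bounded continuous remainder. Ingredient (ii) is already recorded in the second Remark following Theorem~\ref{thm:Fourier-decay-GMC} (with references to Bertacco, Rhodes--Vargas, Garban--Vargas, and Lin--Qiu--Tan), and applies verbatim under the hypothesis of the lemma. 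The task therefore reduces to establishing (i).

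For (i), the idea is a Parseval comparison between polynomial Fourier decay and the correlation integral. Fix $s\in (0,d)$ with $s<\dim_F(\mu)$, so that $|\widehat{\mu}(\boldsymbol{n})|^2\le C(1+|\boldsymbol{n}|)^{-s}$ for every $\boldsymbol{n}\in \Z^d$. Pick a non-negative, real, even $\phi\in C_c^\infty(\R^d)$ with $\supp \phi\subset \overline{B}(\boldsymbol{0},1/2)$ and $\phi\ge c_0>0$ on $\overline{B}(\boldsymbol{0},1/4)$; for $r\in (0,1/4)$ set $\phi_r(\bt)\coloneq r^{-d}\phi(\bt/r)$, push it onto $\T^d$ by $P_r(\mathcal{E}(\bt))\coloneq \phi_r(\bt)$, and write $\widetilde{P}_r(\bz)\coloneq P_r(\bar{\bz})$. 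Plancherel on $\T^d$ then yields
\[
\iint_{\T^d\times \T^d}(P_r*\widetilde{P}_r)(\bz\bar{\bw})\,\mu(\dd\bz)\mu(\dd\bw)=\sum_{\boldsymbol{n}\in \Z^d}|\widehat{P_r}(\boldsymbol{n})|^2\,|\widehat{\mu}(\boldsymbol{n})|^2.
\]
On one hand, $P_r*\widetilde{P}_r\ge c_1 r^{-d}$ on a ball of radius $c_2 r$ around $\boldsymbol{1}$, so the left-hand side is bounded below by $c_1 r^{-d}\int_{\T^d}\mu(\overline{B}_{\T^d}(\bz,c_2 r))\,\mu(\dd\bz)$. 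On the other hand, $\widehat{P_r}(\boldsymbol{n})=\widehat{\phi}(r\boldsymbol{n})$ has Schwartz decay in $r|\boldsymbol{n}|$, so combined with the Fourier-decay hypothesis on $\mu$ (and a trivial tail bound for $|\boldsymbol{n}|>1/r$) one obtains
\[
\sum_{\boldsymbol{n}\in \Z^d}|\widehat{P_r}(\boldsymbol{n})|^2\,|\widehat{\mu}(\boldsymbol{n})|^2\lesssim \sum_{|\boldsymbol{n}|\le 1/r}(1+|\boldsymbol{n}|)^{-s}+r^{s-d}\lesssim r^{s-d},
\]
using $s<d$.

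Combining the two bounds gives $\int \mu(\overline{B}_{\T^d}(\bz,c_2 r))\,\mu(\dd\bz)\lesssim r^s$. For any disjoint family $\{\overline{B}_{\T^d}(\bz_i,r')\}$, one has $\sum_i \mu(\overline{B}_{\T^d}(\bz_i,r'))^2\le \int \mu(\overline{B}_{\T^d}(\bz,2r'))\,\mu(\dd\bz)$, so choosing $r'\sim r$ yields $\sup\sum_i \mu(\overline{B}_{\T^d}(\bz_i,r'))^2\lesssim (r')^s$ and hence $\dim_2(\mu)\ge s$ from the definition recalled in the second Remark after Theorem~\ref{thm:Fourier-decay-GMC}. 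Letting $s\uparrow \dim_F(\mu)$ concludes (i), and combining with (ii) proves the lemma. The argument is a textbook Parseval comparison; the only bookkeeping concerns transferring the $\R^d$ mollifier cleanly to $\T^d$ (trivial via $\mathcal{E}$ for small $r$) and controlling the frequency tail through the rapid decay of $\widehat{\phi}$, so no substantive obstacle is anticipated.
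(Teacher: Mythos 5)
Your proposal is correct and follows the same high-level strategy the paper announces in the paragraph just before Proposition~\ref{prop:uniform-boundedness-Fourier-Lebesgue-norm}: combine the general comparison $\dim_F(\mu)\le\dim_2(\mu)$ with the known identity $\dim_2(\GMC_K^\gamma)=D_{\gamma,d}$. The paper, however, does not actually prove Lemma~\ref{lemma:upper-bound}; it simply points to the references \cite{Ber23,RV14,GV23} and calls the lemma a direct adaptation of \cite[Lemma~3.6]{LQT25}. What your proposal adds is a self-contained proof of the comparison $\dim_F\le\dim_2$ by the standard Parseval/mollifier argument, and the steps check out: with the paper's Fourier convention one has $\widehat{P_r*\widetilde P_r}(\boldsymbol{n})=|\widehat{P_r}(\boldsymbol{n})|^2$ and $\widehat{\mu}(-\boldsymbol{n})\widehat{\mu}(\boldsymbol{n})=|\widehat{\mu}(\boldsymbol{n})|^2$ for the real measure $\mu$, so the Plancherel identity is valid; the pointwise lower bound $P_r*\widetilde P_r\gtrsim r^{-d}$ near $\boldsymbol 1$, the low-frequency count $\sum_{|\boldsymbol n|\le 1/r}(1+|\boldsymbol n|)^{-s}\sim r^{s-d}$ for $s<d$, and the Schwartz decay of $\widehat{\phi}$ for the tail together give $\int \mu(\overline B_{\T^d}(\bz,c_2 r))\,\mu(\dd\bz)\lesssim r^s$; and the passage from this to the disjoint-ball sum in the definition of $\dim_2$ is the usual covering comparison. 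One small wording point: ingredient (i) uses the \emph{positivity} of $\mu$ (to drop the nonnegative contribution of $P_r*\widetilde P_r$ away from the near-diagonal region), so ``every finite Borel measure'' should read ``every finite positive Borel measure''; since GMC measures are positive, this does not affect the application.
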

Lemma~\ref{lemma:upper-bound} states a well-known upper bound for the Fourier dimension of GMC, see for instance Bertacco \cite[Theorem~3.1 and Formula~(3.2)]{Ber23}, Rhodes--Vargas \cite[Section~4.2]{RV14}, and Garban--Vargas \cite[Remark~2]{GV23}. The clear formulation we use here is a direct adaptation of \cite[Lemma~3.6]{LQT25} which was stated for the setting of $[0,1]^d$.

\begin{proof}[Proof of Theorem~\ref{thm:Fourier-decay-GMC}]
By (P3) in Proposition~\ref{prop:smooth-decomposition-log-correlated-field}, the remainder $g$ of kernel function $K$ is a bounded continuous function. Hence we can apply Lemma~\ref{lemma:upper-bound} to obtain the upper bound of the Fourier dimension of $\mu_\infty$, which states that 
\[
\operatorname{dim}_F(\mu_\infty)\le D_{\gamma,d},\quad \text{a.s.},
\]
for $\gamma\in (0,\sqrt{2d})$. 

By Proposition~\ref{prop:uniform-boundedness-Fourier-Lebesgue-norm} and the vector-valued martingale theory, we can obtain
\[
\E\big[\|\mu_\infty\|_{\mathcal{F}L^{\tau/2,q}}^p\big]=\sup_{m\ge 1}\E\big[\|\mu_m\|_{\mathcal{F}L^{\tau/2,q}}^p\big]<\infty
\]
for any $\tau\in (0,D_{\gamma,d})$. Hence, we have, almost surely,
\[
|\widehat{\mu_\infty}(\boldsymbol{n})|^2=O(|\boldsymbol{n}|^{-\tau}),\quad \text{as}~|\boldsymbol{n}|\to \infty.
\] 
Therefore, the GMC measure $\mu_\infty$ satisfies
\[
\operatorname{dim}_F(\mu_\infty)\ge D_{\gamma,d},\quad \text{a.s.}
\]
This completes the proof of Theorem~\ref{thm:Fourier-decay-GMC}.
\end{proof}

The remainder of this section is dedicated to the proof of the crucial Proposition~\ref{prop:uniform-boundedness-Fourier-Lebesgue-norm}.

\subsection{Proof of Proposition~\ref{prop:uniform-boundedness-Fourier-Lebesgue-norm}}\label{ssec:proof-uniform-bdn}

The natural filtration $(\mathscr{G}_{m})_{m\ge 1}$ is defined by
\[
\mathscr{G}_m\coloneq \sigma\{X_1,X_2,\ldots,X_m\},
\]
that is, the $\sigma$-algebra generated by the random processes $X_1,X_2,\ldots,X_m$ introduced in \eqref{eq:def-Xj}.
\medskip
{\flushleft \bf Step 1. Decoupling of different scales.}
\medskip

\begin{lemma}
The sequence of random vectors $\{(\langle \boldsymbol{n}\rangle^{\tau/2}\widehat{\mu}_m(\boldsymbol{n}))_{\boldsymbol{n}\in \Z^d}:m\ge 1\}$ is an $\ell^q$-vector-valued martingale.
\end{lemma}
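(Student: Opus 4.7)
The plan is to verify the three defining conditions for the sequence $F_m := (\langle\boldsymbol{n}\rangle^{\tau/2}\widehat{\mu}_m(\boldsymbol{n}))_{\boldsymbol{n}\in\Z^d}$ to be an $\ell^q$-vector-valued martingale adapted to $(\mathscr{G}_m)_{m\ge 1}$: measurability, integrability in $L^1(\PP;\ell^q)$, and the conditional expectation identity $\E[F_{m+1}\mid\mathscr{G}_m]=F_m$ in $\ell^q$. Measurability is immediate, since each coordinate $F_m(\boldsymbol{n})$ is a continuous functional of $(X_1,\ldots,X_m)$.

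I would first establish the coordinate-wise martingale identity $\E[\widehat{\mu}_{m+1}(\boldsymbol{n})\mid\mathscr{G}_m]=\widehat{\mu}_m(\boldsymbol{n})$ using three ingredients: the independence of $\psi_{m+1}$ from $\mathscr{G}_m$ granted by the construction in Proposition~\ref{prop:smooth-decomposition-log-correlated-field}, the normalization $\E[X_{m+1}(\bz)]=1$ from the standard Gaussian moment generating function computation applied to~\eqref{eq:def-Xj}, and a conditional Fubini exchange justified by the positivity of $X_j$ and $\E[\mu_{m+1}(\T^d)]=1$. Combining these,
\[
\E\big[\widehat{\mu}_{m+1}(\boldsymbol{n})\,\big|\,\mathscr{G}_m\big]=\int_{\T^d}\prod_{j=1}^m X_j(\bz)\cdot\E[X_{m+1}(\bz)]\cdot\bz^{\boldsymbol{n}}\,\boldsymbol{m}(\mathrm{d}\bz)=\widehat{\mu}_m(\boldsymbol{n}).
\]
Since coordinate projections are bounded linear functionals on $\ell^q$, they commute with the Bochner conditional expectation, and this coordinate-wise identity lifts to the $\ell^q$-valued identity $\E[F_{m+1}\mid\mathscr{G}_m]=F_m$ as soon as the appropriate integrability is in place.

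To establish $F_m\in L^1(\PP;\ell^q)$ for each fixed $m$, I would exploit Property~(P0) of Proposition~\ref{prop:smooth-decomposition-log-correlated-field}: almost surely the density $\prod_{j=1}^m X_j$ belongs to $C^\infty(\T^d)$. Repeated integration by parts against the Fourier basis via Green's identity (no boundary terms appear on $\T^d$) gives, for every integer $k\ge 0$,
\[
|\widehat{\mu}_m(\boldsymbol{n})|\le (4\pi^2|\boldsymbol{n}|^2)^{-k}\,\Big\|\Delta^k\textstyle\prod_{j=1}^m X_j\Big\|_{L^1(\boldsymbol{m})}.
\]
Expanding the Laplacian via Leibniz's rule and substituting the pointwise moment bounds of Property~(P4), I expect a bound of the form $\E[|\widehat{\mu}_m(\boldsymbol{n})|^q]\le C(m,k,q)\langle\boldsymbol{n}\rangle^{-2kq}$. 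Choosing $k$ large enough so that $2kq-\tau q/2>d$ makes $\sum_{\boldsymbol{n}}\langle\boldsymbol{n}\rangle^{\tau q/2}\E[|\widehat{\mu}_m(\boldsymbol{n})|^q]$ converge by Tonelli, and Jensen's inequality then yields $F_m\in L^1(\PP;\ell^q)$.

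The only genuine difficulty lies in the last step: the Leibniz expansion of $\Delta^k(\prod_{j=1}^m X_j)$ produces a combinatorial sum of products of mixed partials of the $\psi_j$'s, and controlling its $L^1(\boldsymbol{m})$-moments requires interlacing Fubini with the pointwise bounds of~(P4). Crucially, however, this lemma only needs finiteness for each fixed $m$; the delicate uniform-in-$m$ control is the content of Proposition~\ref{prop:uniform-boundedness-Fourier-Lebesgue-norm} and is deferred to the multi-resolution analysis. Consequently, crude constants depending on $m$ are acceptable at this stage, and the verification of the martingale structure is essentially routine.
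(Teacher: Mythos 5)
Your proposal is correct and follows essentially the same route as the paper: the martingale identity is obtained by conditioning, using that $X_{m}$ (or $X_{m+1}$ in your indexing) is independent of the earlier $\sigma$-algebra, that $\E[X_{m+1}(\bz)]=1$, and a Fubini exchange. The one place where you go beyond the paper is the integrability step. The paper simply asserts $\E[\|\mathcal{M}_m\|_{\ell^q}]<\infty$ ``due to the $C^\infty$-smoothness of the sample paths,'' which by itself only yields $\|\mathcal{M}_m\|_{\ell^q}<\infty$ almost surely, not finiteness of the expectation; your argument via repeated Green's identity, the Leibniz expansion, and the moment bounds of Property~(P4), then summing with Tonelli and applying Jensen, is the honest quantitative justification of that claim. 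So you have not changed the approach, but you have filled in a detail the paper leaves implicit.
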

\begin{proof}

The integrability condition $\E[\|\mathcal{M}_m\|_{l^q}] < \infty$ holds for any finite $m$ due to the $C^\infty$-smoothness of the sample paths of $\mu_m$ (a consequence of (P0) in Proposition~\ref{prop:smooth-decomposition-log-correlated-field}).

For each $m\ge 2$ and $\boldsymbol{n}\in \Z^d$, we have
\begin{align*}
\E[\widehat{\mu}_m(\boldsymbol{n})-\widehat{\mu}_{m-1}(\boldsymbol{n})|\mathscr{G}_{m-1}]=\int_{\T^d}\E\Big[\big[\prod_{j=1}^m X_j(\bz)-\prod_{j=1}^{m-1}X_j(\bz)\big]\Big| \mathscr{G}_{m-1}\Big]\bz^{\boldsymbol{n}}\,\boldsymbol{m}(\mathrm{d}\bz).
\end{align*}
Since $X_m$ is independent of $\mathscr{G}_{m-1}$ and $X_j$ is $\mathscr{G}_{m-1}$-measurable for each $1\le j\le m-1$, we have
\begin{align*}
\E\Big[\big[\prod_{j=1}^m X_j(\bz)-\prod_{j=1}^{m-1}X_j(\bz)\big]\Big| \mathscr{G}_{m-1}\Big]&=\big(\E[X_m|\mathscr{G}_{m-1}]-1\big)\cdot \prod_{j=1}^{m-1}X_j(\bz)\\
&=\big(\E[X_m]-1\big)\cdot \prod_{j=1}^{m-1}X_j(\bz)=0.
\end{align*}
Therefore, we have
\[
\E[\widehat{\mu}_m(\boldsymbol{n})-\widehat{\mu}_{m-1}(\boldsymbol{n})|\mathscr{G}_{m-1}]=0,
\]
which implies that $\{(\langle \boldsymbol{n}\rangle^{\tau/2}\widehat{\mu}_m(\boldsymbol{n}))_{\boldsymbol{n}\in \Z^d}:m\ge 1\}$ is an $\ell^q$-vector-valued martingale with respect to the filtration $\{\mathscr{G}_m\}_{m\ge 1}$.
\end{proof}
Denote the $\ell^q$-vector-valued martingale $\{(\langle \boldsymbol{n}\rangle^{\tau/2}\widehat{\mu}_m(\boldsymbol{n}))_{\boldsymbol{n}\in \Z^d}:m\ge 1\}$ by $(\mathcal{M}_m)_{m\ge 1}$. Then we apply \eqref{eq:def-Mtype} on $\E[\|\mathcal{M}_m\|_{\ell^q}^p]$ to decouple components with different scales of $\mu_m$ by
\begin{equation}\label{eq:decoupling-martingale}
\E[\|\mathcal{M}_m\|_{\ell^q}^p]\lesssim \sum_{k=1}^m \E[\|\mathcal{M}_{k}-\mathcal{M}_{k-1}\|_{\ell^q}^p].
\end{equation}

\medskip
{\flushleft \bf Step 2. Decoupling the spatially localized components.}
\medskip

For each $k\ge 1$ and $\boldsymbol{I}\in \mathscr{D}_k$, define
\[
\mathfrak{D}_{\boldsymbol{I}}^\varphi(\boldsymbol{n})\coloneq \langle \boldsymbol{n}\rangle^{\tau/2}\int_{\T^d}\varphi_{\boldsymbol{I}}(\bz)\bigg[\prod_{j=1}^{k-1}X_j(\bz)\bigg]\mathring{X}_k(\bz)\cdot \bz^{\boldsymbol{n}}\,\boldsymbol{m}(\mathrm{d}\bz),
\]
where $\boldsymbol{n}\in \Z^d\setminus\{\boldsymbol{0}\}$ and 
\[
\mathring{X}_k(\bz)=X_k(\bz)-\E[X_k(\bz)]=X_k(\bz)-1.
\]

Recall that 
\[
\mathcal{M}_k-\mathcal{M}_{k-1}=\bigg(\langle \boldsymbol{n}\rangle^{\tau/2}\int_{\T^d}\bigg[\prod_{j=1}^{k-1}X_j(\bz)\bigg]\mathring{X}_k(\bz)\bz^{\boldsymbol{n}}\,\boldsymbol{m}(\mathrm{d}\bz)\bigg)_{\boldsymbol{n}\in \Z^d}.
\]
We insert the partition of unity $\{\varphi_{\boldsymbol{I}}\}_{\boldsymbol{I}\in \mathscr{D}_k}$ into the integral above to obtain
\begin{align*}
\langle \boldsymbol{n}\rangle^{\tau/2}\int_{\T^d}\bigg[\prod_{j=1}^{k-1}X_j(\bz)\bigg]\mathring{X}_k(\bz)\bz^{\boldsymbol{n}}\,\boldsymbol{m}(\mathrm{d}\bz)&=\langle \boldsymbol{n}\rangle^{\tau/2}\int_{\T^d}\bigg[\prod_{j=1}^{k-1}X_j(\bz)\bigg]\mathring{X}_k(\bz)\Big(\sum_{\boldsymbol{I}\in \mathscr{D}_k}\varphi_{\boldsymbol{I}}(\bz)\Big)\bz^{\boldsymbol{n}}\,\boldsymbol{m}(\mathrm{d}\bz)\\
&=\sum_{\boldsymbol{I}\in \mathscr{D}_k}\langle \boldsymbol{n}\rangle^{\tau/2}\int_{\T^d}\bigg[\prod_{j=1}^{k-1}X_j(\bz)\bigg]\mathring{X}_k(\bz)\varphi_{\boldsymbol{I}}(\bz)\bz^{\boldsymbol{n}}\,\boldsymbol{m}(\mathrm{d}\bz)\\
&=\sum_{\boldsymbol{I}\in \mathscr{D}_k}\mathfrak{D}_{\boldsymbol{I}}^\varphi(\boldsymbol{n}).
\end{align*}

It turns out that we can divide $\sum_{\boldsymbol{I}\in \mathscr{D}_k}\mathfrak{D}_{\boldsymbol{I}}^\varphi(\boldsymbol{n})$ into finite parts. The summands in each part are conditionally independent, which allows us to apply \eqref{eq:def-ind-Mtype} to obtain the estimate of $\E[\|\mathcal{M}_k-\mathcal{M}_{k-1}\|_{\ell^q}^p]$.

Now we make the assertion above clear. Define an equivalence relation on $\mathscr{D}_k$ by
\[
\boldsymbol{I}\sim \boldsymbol{I}'\quad \text{if and only if}\quad \boldsymbol{c}_{\boldsymbol{I}}-\boldsymbol{c}_{\boldsymbol{I}'}\in (5\cdot 2^{-k})\Z^d.
\]
Thereby, we can partition $\mathscr{D}_k$ into disjoint subfamilies $\{\mathscr{D}_{k,s}\}_{s=1}^{S_k}$. These unordered subfamilies can be uniquely determined by the following two features.
\begin{itemize}
	\item[(1)] For any $\boldsymbol{I}\in \mathscr{D}_{k,s}$, we have
	\[
	\mathscr{D}_{k,s}=\{\boldsymbol{I}'\in \mathscr{D}_k:\boldsymbol{I}'\sim \boldsymbol{I}\}.
	\]
	\item[(2)] The family of $k$-th generation dyadic cubes $\mathscr{D}_k$ can be decomposed as
	\[
	\mathscr{D}_k=\bigsqcup_{s=1}^{S_k}\mathscr{D}_{k,s}.
	\]
\end{itemize}
\begin{lemma} Let $\mathscr{D}_{k,s}$ be the $s$-th subfamily of $\mathscr{D}_k$ defined above. Then the following properties hold:
\begin{itemize}
    \item[(1)]  For any $s\in \{1,2,\ldots,S_k\}$ and different $\boldsymbol{I},\boldsymbol{I}'\in \mathscr{D}_{k,s}$, $\{X_k(\bz)\}_{\bz\in 2\boldsymbol{I}}$ and $\{X_k(\bz)\}_{\bz\in 2\boldsymbol{I}'}$ are independent. 
    \item[(2)]  The number of partitions $S_k$ can be bounded by a constant $S(d)\coloneq 5^d$.
\end{itemize}
\end{lemma}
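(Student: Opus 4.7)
The strategy is to treat the two assertions independently. The counting statement in (2) is a combinatorial fact about residue classes, while the independence in (1) is a consequence of the disjoint-support property (P1) from Proposition~\ref{prop:smooth-decomposition-log-correlated-field} combined with the standard fact that uncorrelated centered Gaussian vectors are independent.

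For (2), I would parametrize the cubes of $\mathscr{D}_k$ by writing each $\boldsymbol{I} \in \mathscr{D}_k$ as $\boldsymbol{I} = [0, 2^{-k})^d + 2^{-k}\boldsymbol{h}_{\boldsymbol{I}}$ with $\boldsymbol{h}_{\boldsymbol{I}} \in \{-2^k,\ldots,2^k-1\}^d$, so that $\boldsymbol{c}_{\boldsymbol{I}} = 2^{-k-1}\boldsymbol{1} + 2^{-k}\boldsymbol{h}_{\boldsymbol{I}}$. The defining relation $\boldsymbol{c}_{\boldsymbol{I}} - \boldsymbol{c}_{\boldsymbol{I}'} \in (5 \cdot 2^{-k}) \Z^d$ then becomes $\boldsymbol{h}_{\boldsymbol{I}} \equiv \boldsymbol{h}_{\boldsymbol{I}'} \pmod{5}$ componentwise. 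The equivalence classes are therefore in bijection with a subset of the residue classes of $(\Z/5\Z)^d$, and $S_k \le 5^d$ follows immediately.

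For (1), I would first recall that because $(\psi_k(\bz))_{\bz \in \T^d}$ is a centered Gaussian process, uncorrelated families are independent, and (P1) says $K_k(\bz, \bw) = 0$ whenever $d_{\T^d}(\bz, \bw) \ge 3\cdot 2^{-k}$. Thus, writing $X_k(\bz) = \exp(\gamma \psi_k(\bz) - (\gamma^2/2)\mathcal{K}_k(\boldsymbol{1}))$, it suffices to verify that for distinct $\boldsymbol{I}, \boldsymbol{I}' \in \mathscr{D}_{k,s}$ one has $d_{\T^d}(\bz, \bz') \ge 3\cdot 2^{-k}$ for every $\bz \in \mathcal{E}(2\boldsymbol{I})$ and $\bz' \in \mathcal{E}(2\boldsymbol{I}')$. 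The separation argument goes as follows: being distinct and equivalent, the centers satisfy $\boldsymbol{c}_{\boldsymbol{I}} - \boldsymbol{c}_{\boldsymbol{I}'} = 5\cdot 2^{-k}\boldsymbol{m}$ for some nonzero $\boldsymbol{m} \in \Z^d$, giving a coordinate $i^\ast$ with $|c_{\boldsymbol{I},i^\ast} - c_{\boldsymbol{I}',i^\ast}| \ge 5 \cdot 2^{-k}$. Since any $\bt \in 2\boldsymbol{I}$ lies within $L^\infty$-distance $2^{-k}$ of $\boldsymbol{c}_{\boldsymbol{I}}$ (similarly for $2\boldsymbol{I}'$), the coordinate-wise separation of lifted representatives is at least $3 \cdot 2^{-k}$, which implies the claimed torus-distance lower bound.

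The delicate part of the argument is to choose the lifts of $\bz, \bz'$ to $[-1/2,1/2)^d$ in such a way that the Euclidean separation does indeed project to a torus separation of the same magnitude. This is ensured by the fact that the spacing constant $5$ in the equivalence relation is chosen so that $5\cdot 2^{-k}\boldsymbol{m}$ stays strictly away from $\Z^d$ for the range of $\boldsymbol{m}$ arising from actual pairs $\boldsymbol{h}_{\boldsymbol{I}}, \boldsymbol{h}_{\boldsymbol{I}'} \in \{-2^k,\ldots,2^k-1\}^d$; here the factor of $5$ comfortably absorbs the extent $2 \cdot 2^{-k}$ of the doubled cubes plus the $3 \cdot 2^{-k}$ decorrelation radius. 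Once the torus separation $d_{\T^d}(2\boldsymbol{I}, 2\boldsymbol{I}') \ge 3 \cdot 2^{-k}$ is in hand, Gaussian independence yields independence of $\{\psi_k(\bz)\}_{\bz \in 2\boldsymbol{I}}$ and $\{\psi_k(\bz)\}_{\bz \in 2\boldsymbol{I}'}$, and hence of the corresponding $X_k$-families, completing the proof.
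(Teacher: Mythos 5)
Your overall route matches the paper's: for (2), count residue classes modulo $5$; for (1), deduce torus separation of the doubled cubes and conclude independence from the finite-dependence property (P1) together with Gaussianity. Your argument for (2) is correct and essentially identical to the paper's (the paper phrases it as an injection into $\{0,1,2,3,4\}^d$, you phrase it as reduction of the indices $\boldsymbol{h}_{\boldsymbol{I}}$ modulo $5$; same content). For (1), however, there is a genuine gap, and it is worth noting that you correctly flag the subtle step that the paper merely asserts: the passage from Euclidean separation of centers to the torus separation $d_{\T^d}(\bz,\bz')\ge 3\cdot 2^{-k}$. Your proposed resolution---that ``$5\cdot 2^{-k}\boldsymbol{m}$ stays strictly away from $\Z^d$'' for the relevant $\boldsymbol{m}$---is false. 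Take $d=1$, $k=4$, and the cubes indexed by $h=-8$ and $h'=7$ (boundary cubes of $[-1/2,1/2)$). Then $h-h'=-15\in 5\Z$, so these cubes lie in the same subfamily $\mathscr{D}_{k,s}$, yet the Euclidean difference of centers is $-15\cdot 2^{-4}=-15/16$, which is within $1/16=2^{-k}$ of $-1\in\Z$. The torus distance between the two centers is therefore only $2^{-k}$, and the doubled intervals $\mathcal{E}(2\boldsymbol{I})$ and $\mathcal{E}(2\boldsymbol{I}')$, each of half-width $2^{-k}$, in fact \emph{overlap} across the seam at $\pm\tfrac12$. The decorrelation radius $3\cdot 2^{-k}$ is nowhere near attained, so independence fails for that pair.

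This wrap-around failure is not a peculiarity of your write-up; it is latent, unaddressed, in the paper's own proof. The root cause is that $5$ is coprime to $2^k$, so $(5\cdot 2^{-k})\Z$ together with the torus period $\Z$ generates all of $2^{-k}\Z$, and the Euclidean equivalence does not descend to a well-separated equivalence on the torus. The cleanest repair is to replace the spacing constant $5$ by a power of two, say $8$. For $k\ge 3$ one has $8\mid 2^k$, so $h\equiv h'\pmod 8$ is compatible with the torus identification $h\sim h+2^k$; any two distinct cubes in the same class then have torus center distance at least $8\cdot 2^{-k}$, giving $d_{\T^d}(2\boldsymbol{I},2\boldsymbol{I}')\ge 6\cdot 2^{-k}>3\cdot 2^{-k}$. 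The class count becomes $8^d$ rather than $5^d$, which is still a dimensional constant and is all that the Step 2 decoupling actually uses.
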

\begin{proof}
(1). Let $\boldsymbol{I},\boldsymbol{I}'\in \mathscr{D}_{k,s}$ and $\boldsymbol{I}\ne \boldsymbol{I}'$. Then for any $\bz\in 2\boldsymbol{I}$, $\bz'\in 2\boldsymbol{I}'$, we have $d_{\T^d}(\bz,\bz')\ge 3\cdot 2^{-k}$.
By (P1) in Proposition~\ref{prop:smooth-decomposition-log-correlated-field}, we know that if $d_{\T^d}(\bz,\bz')\ge 3\cdot 2^{-k}$, then 
\[
\E[X_k(\bz)X_k(\bz')]=K_k(\bz\cdot \overline{\bz'})=0,
\]
that is, $X_k(\bz)$ and $X_k(\bz')$ are independent. This proves (1). 

(2). Fix a cube $\boldsymbol{I}\in \mathscr{D}_{k,1}$, whose center $\boldsymbol{c}_{\boldsymbol{I}}=(c_1,c_2,\ldots,c_d)$. Then for any $\mathscr{D}_{k,s}$, we can pick a unique cube $\boldsymbol{I}_s\in \mathscr{D}_{k,s}$, such that the center of $\boldsymbol{c}_{\boldsymbol{I}_s}=(c_{s,1},c_{s,2},\ldots,c_{s,d})$ satisfying $c_{s,j}-c_{j}\in \{b\cdot 2^{-k}:b=0,1,\ldots,4\}$. 
In other words, the center of $\boldsymbol{I}_s$ can be written as
\[
\boldsymbol{c}_{\boldsymbol{I}_s}=\boldsymbol{c}_{\boldsymbol{I}}+2^{-k}\boldsymbol{b}_s,
\]
for some $\boldsymbol{b}_s=(b_{s,1},b_{s,2},\ldots,b_{s,d})\in \{0,1,2,3,4\}^d$. 

This gives an injection from $\{\mathscr{D}_{k,s}:s=1,2,\ldots,R_k\}$ to $\{0,1,2,3,4\}^d$, which provides us the upper bound of the number of partitions $S_k\le 5^d$.
\end{proof}
We can now estimate $\E[\|\mathcal{M}_k-\mathcal{M}_{k-1}\|_{\ell^q}^p]$ by
\begin{align*}
\E[\|\mathcal{M}_k-\mathcal{M}_{k-1}\|_{\ell^q}^p]\le |S_k|^{p-1} \sum_{s=1}^{S_k}\E\bigg[\bigg\|\sum_{\boldsymbol{I}\in \mathscr{D}_{k,s}}\mathfrak{D}_{\boldsymbol{I}}^\varphi\bigg\|_{\ell^q}^p\bigg]\lesssim \sum_{s=1}^{S_k}\sum_{\boldsymbol{I}\in \mathscr{D}_{k,s}}\E[\|\mathfrak{D}_{\boldsymbol{I}}^\varphi\|_{\ell^q}^p]=\sum_{\boldsymbol{I}\in \mathscr{D}_k}\E[\|\mathfrak{D}_{\boldsymbol{I}}^\varphi\|_{\ell^q}^p],
\end{align*}
where the second inequality follows from \eqref{eq:def-ind-Mtype-scalar} and the implicit constant is independent of $k$.

\medskip
{\flushleft \bf Step 3. Applying the local estimate and conclusion.}
\medskip

\begin{proposition}\label{prop:localization}
For each $k\ge 1$ and $\boldsymbol{I}\in \mathscr{D}_k$, we have the estimate 
\[
\E[\|\mathfrak{D}_{\boldsymbol{I}}^\varphi\|_{\ell^q}^p]\lesssim k^{4dp} 2^{k(-dp+dp/q+\tau p/2)}\sup_{\bz\in \T^d}\bigg[\prod_{j=1}^k \big(\E[|X_j(\bz)|^p]\big)\bigg].
\]
\end{proposition}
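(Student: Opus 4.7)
\medskip

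The plan is to attack the $\ell^q$-norm by splitting at the natural frequency scale $|\boldsymbol{n}|\asymp 2^k$ dictated by the spatial support of $\varphi_{\boldsymbol{I}}$. Writing
\[
F(\bz)\coloneq \varphi_{\boldsymbol{I}}(\bz)\Bigl[\prod_{j=1}^{k-1}X_j(\bz)\Bigr]\mathring{X}_k(\bz),\qquad \mathfrak{D}_{\boldsymbol{I}}^\varphi(\boldsymbol{n})=\langle\boldsymbol{n}\rangle^{\tau/2}\widehat{F}(\boldsymbol{n}),
\]
the $\ell^q$-triangle inequality gives $\|\mathfrak{D}_{\boldsymbol{I}}^\varphi\|_{\ell^q}^p\lesssim \|\mathfrak{D}_{\boldsymbol{I}}^\varphi\|_{\ell^q(|\boldsymbol{n}|<2^k)}^p+\|\mathfrak{D}_{\boldsymbol{I}}^\varphi\|_{\ell^q(|\boldsymbol{n}|\ge 2^k)}^p$. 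The low block will use the trivial size of $\widehat{F}(\boldsymbol{n})$, and the high block will convert spatial regularity into polynomial frequency decay via Green's identity. Both regimes will ultimately be reduced, through H\"older in $\bz$ together with the independence of the processes $\{X_j\}$, to the single quantity $\sup_{\bz}\prod_{j=1}^{k}\mathbb{E}[X_j(\bz)^p]$.

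For the low-frequency block, the pointwise bound $|\widehat{F}(\boldsymbol{n})|\le \int_{2\boldsymbol{I}}|F|\,\boldsymbol{m}(\mathrm{d}\bz)$ is independent of $\boldsymbol{n}$, so that
\[
\|\mathfrak{D}_{\boldsymbol{I}}^\varphi\|_{\ell^q(|\boldsymbol{n}|<2^k)}^p\le \Bigl(\sum_{|\boldsymbol{n}|<2^k}\langle\boldsymbol{n}\rangle^{\tau q/2}\Bigr)^{p/q}\Bigl(\int_{2\boldsymbol{I}}|F|\,\boldsymbol{m}(\mathrm{d}\bz)\Bigr)^p\lesssim 2^{k(dp/q+\tau p/2)}\Bigl(\int_{2\boldsymbol{I}}|F|\,\boldsymbol{m}(\mathrm{d}\bz)\Bigr)^p.
\]
Applying H\"older with the volume bound $\boldsymbol{m}(2\boldsymbol{I})\asymp 2^{-kd}$ yields $\mathbb{E}\bigl[(\int_{2\boldsymbol{I}}|F|)^p\bigr]\lesssim 2^{-kdp}\sup_{\bz}\mathbb{E}[|F(\bz)|^p]$, and expanding $|F|^p$ by independence of the $X_j$, together with the elementary bound $\mathbb{E}[|\mathring{X}_k|^p]\lesssim \mathbb{E}[X_k^p]$, contributes exactly the desired factor $\sup_{\bz}\prod_{j=1}^{k}\mathbb{E}[X_j(\bz)^p]$ with exponent $2^{k(-dp+dp/q+\tau p/2)}$ (and no $k$-power).

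For the high-frequency block, in the $\bt$-coordinates the Fourier character satisfies $\Delta(\bz^{\boldsymbol{n}})=-4\pi^2|\boldsymbol{n}|^2\bz^{\boldsymbol{n}}$, so Green's identity applied $d$ times, using that $F$ is $C^\infty$ with compact support in $2\boldsymbol{I}$, gives
\[
|\widehat{F}(\boldsymbol{n})|\le (4\pi^2|\boldsymbol{n}|^2)^{-d}\int_{2\boldsymbol{I}}|\Delta^d F|\,\boldsymbol{m}(\mathrm{d}\bz).
\]
The right-hand side is again $\boldsymbol{n}$-independent, hence
\[
\|\mathfrak{D}_{\boldsymbol{I}}^\varphi\|_{\ell^q(|\boldsymbol{n}|\ge 2^k)}^p\lesssim \Bigl(\sum_{|\boldsymbol{n}|\ge 2^k}\langle\boldsymbol{n}\rangle^{\tau q/2}|\boldsymbol{n}|^{-2dq}\Bigr)^{p/q}\Bigl(\int_{2\boldsymbol{I}}|\Delta^d F|\,\boldsymbol{m}(\mathrm{d}\bz)\Bigr)^p.
\]
Since $\tau<d$ and $q\ge 2$, the sum converges and comparing to an integral gives $\lesssim 2^{k(dp/q+\tau p/2-2dp)}$. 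It then remains to show $\mathbb{E}[(\int_{2\boldsymbol{I}}|\Delta^d F|)^p]\lesssim k^{4dp}\,2^{(2d-d)kp}\sup_{\bz}\prod_{j=1}^{k}\mathbb{E}[X_j(\bz)^p]$, which combined with the frequency sum produces the claimed factor $k^{4dp}2^{k(-dp+dp/q+\tau p/2)}$.

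The main obstacle is this last moment bound on $\Delta^d F$. Expanding by Leibniz yields a $k$-independent finite sum of terms $D^{\boldsymbol{\alpha}_0}\varphi_{\boldsymbol{I}}\cdot\prod_{j=1}^{k-1}D^{\boldsymbol{\alpha}_j}X_j\cdot D^{\boldsymbol{\alpha}_k}\mathring{X}_k$ with $\sum|\boldsymbol{\alpha}_i|=2d$. For the deterministic factor, Lemma~\ref{lemma:pou-scaling-Td} gives $\|D^{\boldsymbol{\alpha}_0}\varphi_{\boldsymbol{I}}\|_\infty\lesssim 2^{k|\boldsymbol{\alpha}_0|}$. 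For the random factors, the key input is Lemma~\ref{lemma:p-moment-higher-derivatives-Xj}, which I will assume delivers
\[
\mathbb{E}[|D^{\boldsymbol{\alpha}_j}X_j(\bz)|^p]\lesssim (j^{2}2^{j})^{|\boldsymbol{\alpha}_j|p}\,\mathbb{E}[X_j(\bz)^p],
\]
obtained by writing $D^{\boldsymbol{\alpha}}X_j$ as a polynomial in derivatives of $\psi_j$ times $X_j$, then using Cauchy--Schwarz together with (P4) and (P3) to trade Gaussian factors against moments of $X_j$ while keeping the constant uniform in $j$. Independence of the $X_j$ across $j$ multiplies these bounds; using $j\le k$ to replace $(j^{2}2^{j})$ by $(k^{2}2^{k})$ in every slot turns $\sum|\boldsymbol{\alpha}_i|=2d$ into a uniform factor $(k^{2}2^{k})^{2dp}=k^{4dp}2^{2dkp}$, which is precisely the budget needed. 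A final application of H\"older in $\bz$, using $\boldsymbol{m}(2\boldsymbol{I})\asymp 2^{-kd}$, absorbs the remaining $2^{-kdp}$ and closes the estimate.
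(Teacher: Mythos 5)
Your proposal follows essentially the same route as the paper: the same high--low split at $|\boldsymbol{n}|\asymp 2^k$, the same trivial low-frequency bound via the volume of $2\boldsymbol{I}$, and the same high-frequency argument via $\Delta^d$ and Green's identity followed by a Leibniz expansion controlled by the moment estimates for $D^{\boldsymbol{\alpha}}X_j$ (Lemma~\ref{lemma:p-moment-higher-derivatives-Xj}, whose form you correctly anticipate up to the harmless bounded factor $\E[X_j^p]$) and the scaling bound for $\varphi_{\boldsymbol{I}}$ (Lemma~\ref{lemma:pou-scaling-Td}). All the exponent bookkeeping you report is correct.

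One step is stated more loosely than it can bear. You describe the Leibniz expansion of $\Delta^d F$ as ``a $k$-independent finite sum of terms'' and then ``replace $(j^2 2^j)$ by $(k^2 2^k)$ in every slot'' to land on $(k^2 2^k)^{2d}$. The number of ordered decompositions $\sum_{j=0}^{k}\boldsymbol{\alpha}_j=\boldsymbol{\alpha}$ with $|\boldsymbol{\alpha}|=2d$ is \emph{not} $k$-independent --- it grows like $k^{2d}$ --- so the crude per-term bound followed by a term count would produce an extra $k^{2dp}$ beyond the stated $k^{4dp}$. The way the paper (and the only clean way) gets $(k^2 2^k)^{2d}$ is to keep the multinomial coefficients in place and apply the multinomial theorem:
\[
\sum_{\sum_{j}\boldsymbol{\alpha}_j=\boldsymbol{\alpha}}\frac{\boldsymbol{\alpha}!}{\prod_j\boldsymbol{\alpha}_j!}\,2^{k|\boldsymbol{\alpha}_0|}\prod_{j=1}^{k}(j^2 2^j)^{|\boldsymbol{\alpha}_j|}
=\Bigl(2^k+\sum_{j=1}^{k}j^2 2^j\Bigr)^{2d}\lesssim (k^2 2^k)^{2d},
\]
where the last inequality uses that the geometric-type sum $\sum_{j\le k}j^2 2^j$ is dominated by its top term. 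For the purposes of Theorem~\ref{thm:Fourier-decay-GMC} the extra $k^{2dp}$ would be harmless (any polynomial in $k$ is absorbed by the geometric decay in $k$), but if you want to prove Proposition~\ref{prop:localization} as stated, with the $k^{4dp}$ factor, you need the multinomial-theorem accounting rather than slot-by-slot replacement plus term counting.
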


\begin{lemma}\label{lemma:p-moment-Xj}
For any $0<p<\infty$, we have
\[
\lim_{j\to \infty}\sup_{\bz\in \T^d}\E[|X_j(\bz)|^p]=2^{p(p-1)\gamma^2/2}.
\]
\end{lemma}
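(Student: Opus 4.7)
\textbf{Proof plan for Lemma~\ref{lemma:p-moment-Xj}.} The plan is to compute $\E[|X_j(\bz)|^p]$ exactly, observe that by stationarity it does not depend on $\bz$, and then invoke property (P3) of Proposition~\ref{prop:smooth-decomposition-log-correlated-field} to identify the limit. No supremum or covering arguments will be required: both nontrivial estimates reduce to a single scalar computation.

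First, I would unpack the definition in \eqref{eq:def-Xj}. Since $X_j(\bz) > 0$, we have
\[
|X_j(\bz)|^p = \exp\!\Big(p\gamma\, \psi_j(\bz) - \tfrac{p\gamma^2}{2}\E[\psi_j(\bz)^2]\Big).
\]
Because $(\psi_j(\bz))_{\bz\in \T^d}$ is a stationary centered Gaussian process, its variance is constant in $\bz$; concretely $\E[\psi_j(\bz)^2] = K_j(\bz,\bz) = \mathcal{K}_j(\boldsymbol{1})$. Applying the Gaussian Laplace transform to the centered Gaussian random variable $p\gamma\,\psi_j(\bz)$, whose variance equals $p^2\gamma^2\mathcal{K}_j(\boldsymbol{1})$, yields
\[
\E\!\big[\exp(p\gamma\,\psi_j(\bz))\big] = \exp\!\Big(\tfrac{p^2\gamma^2}{2}\mathcal{K}_j(\boldsymbol{1})\Big).
\]
Combining the two displays gives the closed form
\[
\E[|X_j(\bz)|^p] = \exp\!\Big(\tfrac{p(p-1)\gamma^2}{2}\mathcal{K}_j(\boldsymbol{1})\Big),
\]
which is independent of $\bz$, so the supremum over $\T^d$ is achieved trivially and equals the same quantity.

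The lemma then follows immediately by passing to the limit $j\to\infty$ and using property (P3) of Proposition~\ref{prop:smooth-decomposition-log-correlated-field}, which asserts $\mathcal{K}_j(\boldsymbol{1})\to \log 2$. Continuity of $x\mapsto \exp(\tfrac{p(p-1)\gamma^2}{2}x)$ delivers
\[
\lim_{j\to\infty}\sup_{\bz\in\T^d}\E[|X_j(\bz)|^p] = \exp\!\Big(\tfrac{p(p-1)\gamma^2}{2}\log 2\Big) = 2^{p(p-1)\gamma^2/2}.
\]
There is no real obstacle here: the entire content of the lemma is packaged into the already-established limiting variance (P3), and the only ingredient beyond it is the classical Gaussian exponential moment formula together with stationarity. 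Accordingly the proof should fit in a few lines.
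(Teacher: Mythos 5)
Your proof is correct and follows essentially the same route as the paper: compute the exponential moment in closed form, observe it equals $\exp\big(\tfrac{p(p-1)\gamma^2}{2}\mathcal{K}_j(\boldsymbol{1})\big)$ independently of $\bz$, and pass to the limit via the limiting-variance property. (The paper's proof in fact cites property (P2) at this last step, which appears to be a typo for (P3), the one you correctly invoke.)
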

\begin{proof}
Fix $\bz\in \T^d$. We have
\[
\E[|X_j(\bz)|^p]=\E\big[\exp\big(p\gamma \psi_j(\bz)-\frac{p\gamma^2}{2}\E[\psi_j(\bz)^2]\big)\big]=\exp\bigg(\frac{p(p-1)\gamma^2}{2}\E[\psi_j(\bz)^2]\bigg).
\]
Recalling that the kernel function of $\psi_j$ is $\mathcal{K}_j$, hence
\[
\E[|X_j(\bz)|^p]=\exp\bigg(\frac{p(p-1)\gamma^2}{2}\mathcal{K}_j(\boldsymbol{1})\bigg),
\]
which is independent of $\bz$. Combining (P2) in Proposition~\ref{prop:smooth-decomposition-log-correlated-field}, we have
\[
\lim_{j\to \infty}\E[|X_j(\bz)|^p]=\lim_{j\to \infty}\exp\bigg(\frac{p(p-1)\gamma^2}{2}\mathcal{K}_j(\boldsymbol{1})\bigg)=\exp\bigg(\frac{p(p-1)\gamma^2}{2}\log 2\bigg)=2^{p(p-1)\gamma^2/2}.
\]
This proves the desired limit.
\end{proof}

Lemma~\ref{lemma:p-moment-Xj} implies that for any $\varepsilon>0$, there exists a constant $M_{\varepsilon}>0$, such that 
\[
\sup_{\bz\in \T^d}\bigg[\prod_{j=1}^k\E[|X_j(\bz)|^p]\bigg]\le M_{\varepsilon}2^{kp(p-1)\gamma^2/2+\varepsilon},\quad \forall k\ge 1.
\]

By Proposition~\ref{prop:localization}, we have
\begin{align*}
\sum_{\boldsymbol{I}\in \mathscr{D}_k}\E[\|\mathfrak{D}_{\boldsymbol{I}}^\varphi\|_{\ell^q}^p]&\lesssim 2^{kd}\cdot k^{4dp} 2^{k(-dp+dp/q+\tau p/2)}\sup_{\bz\in \T^d}\bigg[\prod_{j=1}^k \big(\E[|X_j(\bz)|^p]\big)\bigg]\\
&\lesssim k^{4dp} 2^{k(-(p-1)d+dp/q+\tau p/2)}\cdot 2^{kp(p-1)\gamma^2/2+\varepsilon}.
\end{align*}
Combining this with \eqref{eq:decoupling-martingale}, we obtain
\begin{align}
\notag \E[\|\mathcal{M}_m\|_{\ell^q}^p]&\lesssim \sum_{k=1}^m k^{4dp} 2^{k(-(p-1)d+dp/q+\tau p/2)}\cdot 2^{kp(p-1)\gamma^2/2+\varepsilon}\\
&\lesssim \sum_{k=1}^\infty k^{4dp} 2^{k(-(p-1)d+dp/q+\tau p/2)}\cdot 2^{kp(p-1)\gamma^2/2+\varepsilon}. \label{eq:series-moment-estimate-Mm}
\end{align}
The series above converges if and only if $-(p-1)d+dp/q+\tau p/2+p(p-1)\gamma^2/2+\varepsilon<0$. Define 
\[
\zeta(p)=2d+\gamma^2-\bigg(\frac{2d}{p}+p\gamma^2\bigg).
\]
Thus, for any $\tau>0$ satisfying
\[
\tau<\sup_{1\le p\le 2}\zeta(p),
\]
we can choose $q$ and $\varepsilon$, such that the series \eqref{eq:series-moment-estimate-Mm} converges. An elementary computation shows that
\begin{itemize}
\item If $0<\gamma<\sqrt{2d}/2$, then $\sup_{1\le p\le 2}\zeta(p)=\zeta(2)=d-\gamma^2$.
\item If $\sqrt{2d}/2\le \gamma<\sqrt{2d}$, then $\sup_{1\le p\le 2}\zeta(p)=\zeta(\sqrt{2d}/\gamma)=(\sqrt{2d}-\gamma)^2$.
\end{itemize}
This proves Proposition~\ref{prop:uniform-boundedness-Fourier-Lebesgue-norm}.

We now turn to the proof of Proposition~\ref{prop:localization}, which spans the next two subsections.

\subsection{Asymptotic estimates for the $p$-moments of derivatives of $X_j$}

This subsection is devoted to the proof of Lemma~\ref{lemma:p-moment-higher-derivatives-Xj}, which establishes the uniform boundedness of the $p$-moments of higher derivatives of the random processes $X_j$ defined in \eqref{eq:def-Xj}. This estimate is essential in the proof of Proposition~\ref{prop:localization}.

\begin{lemma}\label{lemma:p-moment-higher-derivatives-Xj}
For any $0<p<\infty$, we have
\[
\sup_{j}\sup_{\bz\in \T^d}\frac{\E[|D^{\boldsymbol{\alpha}} X_j(\bz)|^p]}{j^{2|\boldsymbol{\alpha}|p}2^{j|\boldsymbol{\alpha}|p}}<\infty.
\]
\end{lemma}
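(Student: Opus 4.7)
\textbf{Proof plan for Lemma~\ref{lemma:p-moment-higher-derivatives-Xj}.}
The plan is to express $D^{\boldsymbol{\alpha}} X_j$ through the multivariate Faà di Bruno formula, then isolate the exponential factor from the polynomial in derivatives of $\psi_j$ via Hölder's inequality, and finally apply the Gaussian moment bound (P4) of Proposition~\ref{prop:smooth-decomposition-log-correlated-field} together with Lemma~\ref{lemma:p-moment-Xj}. I will work under the implicit restriction $|\boldsymbol{\alpha}|\le d$ already in force elsewhere in the paper, which is what (P4) provides and what is needed for the subsequent integration-by-parts in Proposition~\ref{prop:localization}.

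First I would use stationarity of $\psi_j$ to observe that $\E[\psi_j(\bz)^2]=\mathcal{K}_j(\boldsymbol{1})$ is constant in $\bz$, so writing $X_j(\bz)=\exp(Y_j(\bz))$ with $Y_j(\bz)=\gamma\psi_j(\bz)-(\gamma^2/2)\mathcal{K}_j(\boldsymbol{1})$ gives $D^{\boldsymbol{\beta}}Y_j=\gamma D^{\boldsymbol{\beta}}\psi_j$ for every multi-index $\boldsymbol{\beta}$ with $|\boldsymbol{\beta}|\ge 1$. The multivariate Faà di Bruno formula then yields
\[
D^{\boldsymbol{\alpha}}X_j(\bz)=X_j(\bz)\cdot F_j(\bz),\qquad F_j(\bz)=\sum_{\pi\in\mathrm{Part}(\boldsymbol{\alpha})} c_\pi\,\gamma^{|\pi|}\prod_{B\in\pi}D^{\boldsymbol{\alpha}_B}\psi_j(\bz),
\]
where $\mathrm{Part}(\boldsymbol{\alpha})$ is the finite set of partitions of $\boldsymbol{\alpha}$ into blocks $B$ with associated sub-multi-indices $\boldsymbol{\alpha}_B$ satisfying $\sum_{B\in\pi}\boldsymbol{\alpha}_B=\boldsymbol{\alpha}$, and $c_\pi$ are combinatorial constants depending only on $\boldsymbol{\alpha}$. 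By Hölder's inequality with exponents $(2,2)$,
\[
\E\bigl[|D^{\boldsymbol{\alpha}}X_j(\bz)|^p\bigr]\le \bigl(\E[X_j(\bz)^{2p}]\bigr)^{1/2}\bigl(\E[|F_j(\bz)|^{2p}]\bigr)^{1/2}.
\]
Lemma~\ref{lemma:p-moment-Xj} (applied with exponent $2p$) gives that the first factor is uniformly bounded in $j$ and $\bz$, since $\E[X_j(\bz)^{2p}]$ converges to $2^{2p(2p-1)\gamma^2/2}$ independently of $\bz$.

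The main work is the bound on $\E[|F_j(\bz)|^{2p}]$. Since $\mathrm{Part}(\boldsymbol{\alpha})$ is finite, I would use the triangle inequality to get $|F_j(\bz)|^{2p}\le C_{\boldsymbol{\alpha},p,\gamma}\sum_{\pi}\prod_{B\in\pi}|D^{\boldsymbol{\alpha}_B}\psi_j(\bz)|^{2p}$, and then apply Hölder's inequality across the $|\pi|$ blocks of each partition:
\[
\E\Bigl[\prod_{B\in\pi}|D^{\boldsymbol{\alpha}_B}\psi_j(\bz)|^{2p}\Bigr]\le \prod_{B\in\pi}\E\bigl[|D^{\boldsymbol{\alpha}_B}\psi_j(\bz)|^{2p|\pi|}\bigr]^{1/|\pi|}.
\]
By (P4) with exponent $2p|\pi|$, each factor is bounded by $\Theta(2p|\pi|)^{1/|\pi|}\,j^{4p|\boldsymbol{\alpha}_B|}\,2^{2jp|\boldsymbol{\alpha}_B|}$. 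Multiplying over $B\in\pi$ the $\Theta$-factors collapse to $\Theta(2p|\pi|)$, and the identity $\sum_{B\in\pi}|\boldsymbol{\alpha}_B|=|\boldsymbol{\alpha}|$ yields the scaling factor $j^{4p|\boldsymbol{\alpha}|}2^{2jp|\boldsymbol{\alpha}|}$. Summing over the finitely many partitions and taking the square root gives $\E[|F_j(\bz)|^{2p}]^{1/2}\le C_{\boldsymbol{\alpha},p,\gamma}\,j^{2p|\boldsymbol{\alpha}|}2^{jp|\boldsymbol{\alpha}|}$, which combined with the uniform bound on $\E[X_j(\bz)^{2p}]$ produces exactly the claimed scaling.

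The analytic inputs (Gaussian moment control from (P4) and the asymptotics of Lemma~\ref{lemma:p-moment-Xj}) do all the heavy lifting, so the only delicate point is the bookkeeping of the multi-index combinatorics in Faà di Bruno — in particular verifying that the exponents of $j^2$ and $2^j$ assemble into the expected $(j^22^j)^{p|\boldsymbol{\alpha}|}$ after the two Hölder applications. Once this is checked, taking the supremum over $\bz$ and $j$ completes the proof.
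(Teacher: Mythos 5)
Your proposal is correct and follows essentially the same route as the paper: multivariate Fa\`a di Bruno applied to the exponential, H\"older's inequality to separate the exponential from the derivative factors, property (P4) for the derivative moments, and the uniform bound on $\sup_j\mathcal{K}_j(\boldsymbol{1})$ for the exponential moment. The only organizational difference --- you peel off $X_j$ via Cauchy--Schwarz first and then apply H\"older only within the derivative product of each partition, whereas the paper applies a single generalized H\"older with exponents $p_0=p_\ell=(L+1)p$ across $\exp(\gamma\psi_j)$ and all $L$ distinct derivative factors at once --- is cosmetic, and the exponent bookkeeping (with the $\Theta$-factors collapsing and $\sum_B|\boldsymbol{\alpha}_B|=|\boldsymbol{\alpha}|$) lands in the same place.
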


The computation of the higher partial derivatives of $X_j$ relies on a special case of multivariate Faà di Bruno identity as follows (for general case, see e.g.,~\cite[Appendix~A.2]{MV11}).
\begin{lemma}\label{lemma:Faa-di-Bruno}
	Let $U$ be a domain in $\R^d$ and $f\in C^\infty(U)$. Then for any multi-index $\boldsymbol{\alpha}\in \N^d$, we have
	\[
		\partial^{\boldsymbol{\alpha}} e^f=e^f\sum_{\sum_\ell m_\ell \boldsymbol{\beta}_\ell=\boldsymbol{\alpha}} \frac{\boldsymbol{\alpha}!}{\prod_\ell m_\ell!}\prod_{\ell} \bigg(\frac{\partial^{\boldsymbol{\beta}_\ell}f}{\boldsymbol{\beta}_\ell!}\bigg)^{m_\ell}.
	\]
\end{lemma}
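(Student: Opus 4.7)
The plan is to read off the identity from the Taylor expansion of $e^{f(\bt+\bs)}$ in an auxiliary variable $\bs$. Since $e^f\in C^\infty(U)$, its Taylor series at $\bt$ gives
\[
	e^{f(\bt+\bs)}=\sum_{\boldsymbol{\alpha}\in \N^d}\frac{\bs^{\boldsymbol{\alpha}}}{\boldsymbol{\alpha}!}(\partial^{\boldsymbol{\alpha}}e^f)(\bt),
\]
so $(\partial^{\boldsymbol{\alpha}}e^f)(\bt)/\boldsymbol{\alpha}!$ is the coefficient of $\bs^{\boldsymbol{\alpha}}$ on the left-hand side. I aim to expand the same quantity in a second way and match coefficients.

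The first step is to factorize
\[
	e^{f(\bt+\bs)}=e^{f(\bt)}\cdot \exp\!\bigl(f(\bt+\bs)-f(\bt)\bigr)
\]
and Taylor-expand the inner difference,
\[
	f(\bt+\bs)-f(\bt)=\sum_{\boldsymbol{\beta}\in \N^d\setminus\{\boldsymbol{0}\}}\frac{\bs^{\boldsymbol{\beta}}}{\boldsymbol{\beta}!}(\partial^{\boldsymbol{\beta}}f)(\bt).
\]
Next, I expand the outer exponential as $\sum_{n\ge 0}\tfrac{1}{n!}(\,\cdot\,)^n$ and apply the multinomial theorem to each $n$-th power along an enumeration $(\boldsymbol{\beta}_\ell)$ of $\N^d\setminus\{\boldsymbol{0}\}$, obtaining
\[
	\frac{1}{n!}\bigg(\sum_\ell \frac{\bs^{\boldsymbol{\beta}_\ell}}{\boldsymbol{\beta}_\ell!}(\partial^{\boldsymbol{\beta}_\ell}f)(\bt)\bigg)^{\!n}=\sum_{\substack{(m_\ell)\\ \sum_\ell m_\ell=n}}\frac{1}{\prod_\ell m_\ell!}\prod_\ell \bigg(\frac{\bs^{\boldsymbol{\beta}_\ell}}{\boldsymbol{\beta}_\ell!}(\partial^{\boldsymbol{\beta}_\ell}f)(\bt)\bigg)^{m_\ell}.
\]
Summing over $n\ge 0$ removes the constraint $\sum_\ell m_\ell=n$ and yields an unrestricted sum over all finitely-supported tuples of nonnegative multiplicities $(m_\ell)$.

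The final step is to extract the coefficient of $\bs^{\boldsymbol{\alpha}}$: since $\prod_\ell \bs^{m_\ell\boldsymbol{\beta}_\ell}=\bs^{\sum_\ell m_\ell\boldsymbol{\beta}_\ell}$, only those tuples with $\sum_\ell m_\ell\boldsymbol{\beta}_\ell=\boldsymbol{\alpha}$ contribute, giving
\[
	\frac{(\partial^{\boldsymbol{\alpha}}e^f)(\bt)}{\boldsymbol{\alpha}!}=e^{f(\bt)}\sum_{\sum_\ell m_\ell\boldsymbol{\beta}_\ell=\boldsymbol{\alpha}}\frac{1}{\prod_\ell m_\ell!}\prod_\ell\bigg(\frac{(\partial^{\boldsymbol{\beta}_\ell}f)(\bt)}{\boldsymbol{\beta}_\ell!}\bigg)^{m_\ell},
\]
and multiplying through by $\boldsymbol{\alpha}!$ gives the stated identity.

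The main point requiring care is bookkeeping the index set in the final partition-sum: the tuples $(m_\ell)$ must be attached to an enumeration of \emph{distinct} nonzero multi-indices $\boldsymbol{\beta}_\ell$ so that each decomposition $\boldsymbol{\alpha}=\sum_\ell m_\ell\boldsymbol{\beta}_\ell$ is counted exactly once, matching the way the identity is stated, and one must also check that rearrangement of the sums is legitimate (which follows from absolute convergence at $\bs=\boldsymbol{0}$ together with matching finite-order Taylor coefficients, so no genuine analyticity of $f$ is needed). An alternative route is an induction on $|\boldsymbol{\alpha}|$ starting from $\partial_ie^f=(\partial_i f)e^f$ and applying Leibniz's rule, but verifying that the partition structure transforms correctly under $\boldsymbol{\alpha}\mapsto\boldsymbol{\alpha}+\boldsymbol{e}_i$ is noticeably more tedious than the generating-function argument above.
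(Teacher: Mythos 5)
You should know that the paper does not prove this lemma at all: it is quoted as a special case of the multivariate Fa\`a di Bruno identity with a pointer to \cite{MV11} (Appendix~A.2), so your generating-function derivation is a self-contained alternative rather than a parallel of the paper's argument. Its combinatorial core is correct: factoring out $e^{f(\bt)}$, expanding the exponential of the increment, applying the multinomial theorem along an enumeration of the \emph{distinct} nonzero multi-indices, and extracting the coefficient of $\bs^{\boldsymbol{\alpha}}$ reproduces exactly the unordered-partition sum $\sum_{\sum_\ell m_\ell\boldsymbol{\beta}_\ell=\boldsymbol{\alpha}}\frac{\boldsymbol{\alpha}!}{\prod_\ell m_\ell!}\prod_\ell\bigl(\partial^{\boldsymbol{\beta}_\ell}f/\boldsymbol{\beta}_\ell!\bigr)^{m_\ell}$, each decomposition being counted once as you note.

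The one step you must repair is the opening display: for $f$ merely $C^\infty$, the Taylor series of $e^{f(\bt+\cdot)}$ need not converge, and even where it converges it need not equal the function, so the identity $e^{f(\bt+\bs)}=\sum_{\boldsymbol{\alpha}}\frac{\bs^{\boldsymbol{\alpha}}}{\boldsymbol{\alpha}!}(\partial^{\boldsymbol{\alpha}}e^f)(\bt)$ is simply not available, and ``absolute convergence at $\bs=\boldsymbol{0}$'' does not justify anything, since only the behavior of remainders near $\boldsymbol{0}$ matters. Your parenthetical remark about matching finite-order Taylor coefficients is the right idea, but it has to carry the proof: either (i) work throughout with Taylor polynomials of order $N=|\boldsymbol{\alpha}|$ and Peano remainders $o(\|\bs\|^{N})$, in which case only the finitely many $\boldsymbol{\beta}$ with $|\boldsymbol{\beta}|\le N$ and multiplicity vectors with $\sum_\ell m_\ell|\boldsymbol{\beta}_\ell|\le N$ enter, every sum is finite, and uniqueness of the order-$N$ Taylor expansion yields the coefficient identification; or (ii) observe that $\partial^{\boldsymbol{\alpha}}e^f(\bt)=e^{f(\bt)}\,P_{\boldsymbol{\alpha}}\bigl((\partial^{\boldsymbol{\beta}}f(\bt))_{|\boldsymbol{\beta}|\le|\boldsymbol{\alpha}|}\bigr)$ for a universal polynomial $P_{\boldsymbol{\alpha}}$ (immediate by induction from $\partial_i e^f=(\partial_i f)e^f$ and the Leibniz rule), so it suffices to verify the identity for polynomial $f$, where your series manipulations are legitimate by real-analyticity. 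With either patch your argument is complete, and being self-contained it gives more than the paper's bare citation.
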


\begin{proof}[Proof of Lemma~\ref{lemma:p-moment-higher-derivatives-Xj}]
By definition,
\[
D^{\boldsymbol{\alpha}} X_j(\bz)=D^{\boldsymbol{\alpha}}\Big(\exp\big(\gamma \psi_j(\bz)-\frac{\gamma^2}{2}\E[\psi_j(\bz)^2]\big)\Big)=\exp\big(\gamma^2 \mathcal{K}_j(\boldsymbol{1})/2\big)\cdot D^{\boldsymbol{\alpha}}\Big(\exp\big(\gamma\psi_j(\bz)\big)\Big).
\]
By Lemma~\ref{lemma:Faa-di-Bruno}, we have
\[
D^{\boldsymbol{\alpha}}\exp\big(\gamma \psi_j(\bz)\big)=\exp(\gamma\psi_j(\bz))\sum_{\sum_\ell m_\ell\boldsymbol{\beta}_\ell=\boldsymbol{\alpha}} \frac{\boldsymbol{\alpha}!}{\prod_\ell m_\ell!} \prod_\ell \bigg(\frac{D^{\boldsymbol{\beta}_\ell}\psi_j(\bz)}{\boldsymbol{\beta}_\ell!}\bigg)^{m_\ell}.
\]
Let $N(\boldsymbol{\alpha})$ denote the total number of unordered partitions $\sum_\ell m_\ell \boldsymbol{\beta}_\ell=\boldsymbol{\alpha}$. Then
\begin{align*}
\E[|D^{\boldsymbol{\alpha}}X_j(\bz)|^p]&=\E\bigg[\bigg|\exp(\gamma\psi_j(\bz))\sum_{\sum_\ell m_\ell\boldsymbol{\beta}_\ell=\boldsymbol{\alpha}}\frac{\boldsymbol{\alpha}!}{\prod_\ell m_\ell}\prod_\ell \bigg(\frac{D^{\boldsymbol{\beta}_\ell}\psi_j(\bz)}{\boldsymbol{\beta}_\ell!}\bigg)^{m_\ell}\bigg|^p\bigg]\\
&\le N(\boldsymbol{\alpha})^{p-1}\sum_{\sum_\ell m_\ell \boldsymbol{\beta}_\ell=\boldsymbol{\alpha}}\frac{(\boldsymbol{\alpha}!)^p}{\prod_\ell (m_\ell!)^p} \E\bigg[\bigg|\exp(\gamma\psi_j(\bz))\prod_{\ell} \bigg(\frac{D^{\boldsymbol{\beta}_\ell}\psi_j(\bz)}{\boldsymbol{\beta}_\ell!}\bigg)^{m_\ell}\bigg|^p\bigg].
\end{align*}

Applying the rough estimate $\prod_\ell m_\ell!\ge 1$ and $\boldsymbol{\beta}_\ell!\ge 1$, we have
\begin{align*}
\E[|D^{\boldsymbol{\alpha}}X_j(\bz)|^p]&\le N(|\boldsymbol{\alpha}|)^p (\boldsymbol{\alpha}!)^p \sup_{\sum_\ell m_\ell \boldsymbol{\beta}_\ell=\boldsymbol{\alpha}} \E\bigg[\bigg|\exp(\gamma\psi_j(\bz))\prod_{\ell} \bigg(\frac{D^{\boldsymbol{\beta}_\ell}\psi_j(\bz)}{\boldsymbol{\beta}_\ell!}\bigg)^{m_\ell}\bigg|^p\bigg]\\
&\le N(|\boldsymbol{\alpha}|)^p (\boldsymbol{\alpha}!)^p \sup_{\sum_\ell m_\ell \boldsymbol{\beta}_\ell=\boldsymbol{\alpha}} \E\Big[\big|\exp(\gamma\psi_j(\bz))\prod_{\ell} |D^{\boldsymbol{\beta}_\ell}\psi_j(\bz)|^{m_\ell}\big|^p\Big].
\end{align*}

For each fixed unordered partition $\sum_\ell m_\ell \boldsymbol{\beta}_\ell=\boldsymbol{\alpha}$, we apply the generalized H\"{o}lder's inequality. Let $L$ be the number of distinct multi-indices $\{\boldsymbol{\beta}_\ell\}_{\ell}$ in the partition. We make the specific choice
\[
p_0 = p_\ell = (L+1)p \quad \text{for all } \ell.
\]
This choice of exponents satisfies the condition 
\[
\frac{1}{p_0} + \sum_{\ell} \frac{1}{p_\ell} = \frac{1}{p}.
\]
We then have
\[
\E\Big[\big|\exp(\gamma\psi_j(\bz))\prod_{\ell} |D^{\boldsymbol{\beta}_\ell}\psi_j(\bz)|^{m_\ell}\big|^p\Big]\le \E[\exp(p_0\gamma\psi_j(\bz))]^{p/p_0}\cdot\prod_\ell \E[|D^{\boldsymbol{\beta}_\ell}\psi_j(\bz)|^{m_\ell p_\ell}]^{p/p_\ell}.
\]
Since $L$ can be bounded by $|\boldsymbol{\alpha}|$ and $p_0=(L+1)p$, we have
\begin{align*}
\E[\exp(p_0\gamma\psi_j(\bz))]^{p/p_0}=\exp\Big(p_0p\gamma^2\mathcal{K}_j(\boldsymbol{1})/2\Big)&=\exp\Big((L+1)p^2\gamma^2\mathcal{K}_j(\boldsymbol{1})/2\Big)\\
&\le \exp\Big((|\boldsymbol{\alpha}|+1)p^2\gamma^2\mathcal{K}_j(\boldsymbol{1})/2\Big).
\end{align*}
Additionally, we know that $\lim\limits_{j\to \infty}\mathcal{K}_j(\boldsymbol{1})=\log 2$, so $\sup_j \mathcal{K}_j(\boldsymbol{1})\le C$ for some constant $C>0$. Thus
\[
\E[\exp(p_0\gamma\psi_j(\bz))]^{p/p_0}\le \exp\Big(C(|\boldsymbol{\alpha}|+1)p^2\gamma^2/2\Big).
\]

Now we estimate the product term $\prod_\ell \E[|D^{\boldsymbol{\beta}_\ell}\psi_j(\bz)|^{m_\ell p_\ell}]^{p/p_\ell}$. By (P4) in Proposition~\ref{prop:smooth-decomposition-log-correlated-field}, there exists an increasing function $\Theta$, such that
\[
\E\big[|D^{\boldsymbol{\beta}_j}\psi_j(\bz)|^{m_\ell p_\ell}\big]\le \Theta(m_\ell p_\ell) j^{2|\boldsymbol{\beta}_\ell |m_\ell p_\ell}2^{j|\boldsymbol{\alpha}|m_\ell p_\ell}.
\]
Since $\sum_\ell m_\ell |\boldsymbol{\beta}_\ell|=|\boldsymbol{\alpha}|$, we have
\[
\prod_{\ell}\Big(\E\big[\big|D^{\boldsymbol{\beta}_{\ell}}\psi_j(\bz)\big|^{m_\ell p_\ell}\big]\Big)^{p/p_\ell}\le \bigg[\prod_\ell \Theta(m_\ell p_\ell)\bigg]^{p/p_\ell} j^{2|\boldsymbol{\alpha}|p}2^{j|\boldsymbol{\alpha}|p}.
\]
Therefore,
\[
\frac{\E[|D^{\boldsymbol{\alpha}} X_j(\bz)|^p]}{j^{2|\boldsymbol{\alpha}|p}2^{j|\boldsymbol{\alpha}|p}}\le N(\boldsymbol{\alpha})^p(\boldsymbol{\alpha}!)^p\sup_{\sum_\ell m_\ell \boldsymbol{\beta}_\ell=\boldsymbol{\alpha}}\exp(p_0p\gamma^2\E[\psi_j(\bz)^2]/2)\cdot\bigg[\prod_\ell \Theta(m_\ell p_\ell)\bigg]^{p/p_\ell}.
\]
Again, recalling that $p_\ell=(L+1)p\le (|\boldsymbol{\alpha}|+1)p$ and $m_\ell\le |\boldsymbol{\alpha}|$, we have
\[
\prod_{\ell}\Big(\E\big[\big|D^{\boldsymbol{\beta}_{\ell}}\psi_j(\bz)\big|^{m_\ell p_\ell}\big]\Big)^{p/p_\ell}\le \big(\Theta(|\boldsymbol{\alpha}|\cdot (|\boldsymbol{\alpha}|+1)p)\big)^{p\cdot \sum_\ell p_\ell^{-1}}j^{2|\boldsymbol{\alpha}|p}2^{j|\boldsymbol{\alpha}|p},
\]
where we use the fact that $\Theta$ is increasing. Since $p\cdot \sum_{\ell}p_\ell^{-1}=p\cdot \frac{L}{(L+1)p}\le 1$ and $I\ge 1$, we have
\[
\prod_{\ell}\Big(\E\big[\big|D^{\boldsymbol{\beta}_{\ell}}\psi_j(\bz)\big|^{m_\ell p_\ell}\big]\Big)^{p/p_\ell}\le \Theta(|\boldsymbol{\alpha}|\cdot (|\boldsymbol{\alpha}|+1)p)j^{2|\boldsymbol{\alpha}|p}2^{j|\boldsymbol{\alpha}|p}.
\]
Therefore,
\[
\frac{\E[|D^{\boldsymbol{\alpha}}X_j(\bz)|^p]}{j^{2|\boldsymbol{\alpha}|p}2^{j|\boldsymbol{\alpha}|p}}\le N(\boldsymbol{\alpha})^p(\boldsymbol{\alpha}!)^p \exp\Big(C(|\boldsymbol{\alpha}|+1)p^2\gamma^2/2\Big)\cdot \Theta(|\boldsymbol{\alpha}|\cdot (|\boldsymbol{\alpha}|+1)p),
\]
where the right hand side of the inequality is a constant independent of $j$ and $\bz$. This finishes our proof.
\end{proof}

\subsection{Proof of Proposition~\ref{prop:localization}} To motivate the strategy of the forthcoming proof, we begin by discussing an one dimensional deterministic model example that explains the key analytic mechanism.

{\flushleft \bf Heuristic.} Consider an oscillatory integral of the form
\[
\int_I f(x) e^{2\pi i n x} \, \mathrm{d}x,
\]
where $I \subset \R$ is an interval of length $|I| = 2^{-k}$ and $n \in \Z$ denotes the frequency parameter. When $|n| \ll 2^k$, the exponential factor $e^{2\pi i n x}$ varies slowly over $I$ and can be regarded as approximately constant. In this regime, the integral is mainly determined by the average value of $f$ on $I$. In contrast, when $|n| \gg 2^k$, the rapid oscillations of $e^{2\pi i n x}$ within $I$ cause the integral to decay. In the high-frequency regime, the decay of the integral can be effectively estimated by repeated integration by parts, which is a standard technique for oscillatory integrals. 

Recall that
\[
\mathfrak{D}_{\boldsymbol{I}}^\varphi(\boldsymbol{n})=\langle \boldsymbol{n}\rangle^{\tau/2}\int_{\T^d}\varphi_{\boldsymbol{I}}(\bz)\bigg[\prod_{j=1}^{k-1}X_j(\bz)\bigg]\mathring{X}_k(\bz)\cdot \bz^{\boldsymbol{n}}\,\mathrm{d}\bz,
\]
where $\mathring{X}_k(\bz)=X_k(\bz)-1,\forall \bz\in \T^d$. For simplicity of our analysis, we can bound the centered process $\mathring{X}_k$ by the original process $X_k$. The validity of this approach, up to a multiplicative constant, is justified by the following lemma.
\begin{lemma}\label{lemma:substitue-Xk-ring}
	Let $k\ge 1$ be integer. For any $1\le p<\infty$, there exists a constant $C(p)>0$ depending only on $p$, such that 
	\[
	\E[|(D^{\boldsymbol{\alpha}}\mathring{X}_k)(\mathcal{E}(\bt+\boldsymbol{c_I}))|^p]\le C(p)\E[|(D^{\boldsymbol{\alpha}}X_k)(\mathcal{E}(\bt+\boldsymbol{c_I}))|^p].
	\]
\end{lemma}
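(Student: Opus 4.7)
The argument naturally splits by the order of the multi-index $\boldsymbol{\alpha}$, and the case $|\boldsymbol{\alpha}|\ge 1$ is actually trivial: since $\mathring{X}_k(\bz)=X_k(\bz)-1$ differs from $X_k(\bz)$ only by a constant, we have
\[
D^{\boldsymbol{\alpha}}\mathring{X}_k(\bz)=D^{\boldsymbol{\alpha}}X_k(\bz), \quad \forall \bz\in \T^d, \quad |\boldsymbol{\alpha}|\ge 1,
\]
so the inequality holds with $C(p)=1$ in this range. Thus the real content lies in the case $|\boldsymbol{\alpha}|=0$, which reduces to showing
\[
\E[|X_k(\bz)-1|^p]\le C(p)\E[|X_k(\bz)|^p]
\]
with a constant $C(p)$ independent of $k$ and $\bz$.

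For this, I would first use the elementary convexity inequality $|a-b|^p\le 2^{p-1}(|a|^p+|b|^p)$ (equivalently, the triangle inequality in $L^p$ raised to the $p$-th power) to obtain
\[
\E[|X_k(\bz)-1|^p]\le 2^{p-1}\bigl(\E[|X_k(\bz)|^p]+1\bigr).
\]
The main point is then to absorb the additive constant $1$ into $\E[|X_k(\bz)|^p]$. For this, I would invoke the key normalization property of the multiplicative chaos factor: by construction, $X_k(\bz)\ge 0$ a.s.\ and $\E[X_k(\bz)]=1$ (this is the Wick-exponential renormalization built into the definition \eqref{eq:def-Xj}). Jensen's inequality applied to the convex function $x\mapsto x^p$ with $p\ge 1$ then gives
\[
\E[|X_k(\bz)|^p]=\E[X_k(\bz)^p]\ge (\E[X_k(\bz)])^p=1.
\]

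Combining the two estimates,
\[
\E[|X_k(\bz)-1|^p]\le 2^{p-1}\bigl(\E[|X_k(\bz)|^p]+1\bigr)\le 2^{p-1}\bigl(\E[|X_k(\bz)|^p]+\E[|X_k(\bz)|^p]\bigr)=2^p\E[|X_k(\bz)|^p],
\]
which proves the lemma with $C(p)=2^p$ (uniformly in $k$, $\boldsymbol{\alpha}$, and the base point $\mathcal{E}(\bt+\boldsymbol{c_I})$). There is essentially no obstacle here; the only subtlety worth flagging is the need to exploit $\E[X_k(\bz)]=1$, which is precisely where the Gaussian Wick normalization intervenes to produce a constant uniform in $k$.
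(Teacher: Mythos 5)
Your proof is correct and matches the paper's argument almost exactly: the same split between $|\boldsymbol{\alpha}|\ge 1$ (trivial equality) and $|\boldsymbol{\alpha}|=0$, the same convexity bound $2^{p-1}$, and the same absorption of the constant using $\E[|X_k|^p]\ge 1$, yielding $C(p)=2^p$. The only cosmetic difference is that you derive $\E[|X_k|^p]\ge 1$ via Jensen's inequality from $X_k\ge 0$ and $\E[X_k]=1$, whereas the paper reads it off the explicit lognormal moment formula $\E[|X_k|^p]=\exp\bigl(\tfrac{p(p-1)\gamma^2}{2}\mathcal{K}_k(\boldsymbol{1})\bigr)\ge 1$; both are valid and give the same conclusion.
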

\begin{proof}
Let $\boldsymbol{\alpha}\in \N^d$ be a multi-index.
\begin{itemize}
	\item If $\boldsymbol{\alpha}\ne \boldsymbol{0}$, then 
	\[
	(D^{\boldsymbol{\alpha}}\mathring{X}_k)(\mathcal{E}(\bt+\boldsymbol{c_I}))=(D^{\boldsymbol{\alpha}}X_k)(\mathcal{E}(\bt+\boldsymbol{c_I})).
	\]
	In this case, the equality
	\[
	\E[|(D^{\boldsymbol{\alpha}}\mathring{X}_k)(\mathcal{E}(\bt+\boldsymbol{c_I}))|^p]=\E[|(D^{\boldsymbol{\alpha}}X_k)(\mathcal{E}(\bt+\boldsymbol{c_I}))|^p]
	\]
	holds.
	\item  If $\boldsymbol{\alpha}=\boldsymbol{0}$, then 
	\[
	\E[|\mathring{X}_k(\mathcal{E}(\bt+\boldsymbol{c_I}))|^p]=\E[|X_k(\mathcal{E}(\bt+\boldsymbol{c_I}))-1|^p]\le 2^{p-1}\big(\E[|X_k(\mathcal{E}(\bt+\boldsymbol{c_I}))|^p]+1\big).
	\]
	For $1\le p<\infty$, we have
	\[
	\E[|X_k(\mathcal{E}(\bt+\boldsymbol{c_I}))|^p]=\exp\big(\frac{p(p-1)\gamma^2}{2}K_k(\boldsymbol{1})\big)\ge 1.
	\]
	Thus
	\[
	\E[|\mathring{X}_k(\mathcal{E}(\bt+\boldsymbol{c_I}))|^p]\le 2^{p-1}\cdot 2\E[|X_k(\mathcal{E}(\bt+\boldsymbol{c_I}))|^p]=2^p\E[|X_k(\mathcal{E}(\bt+\boldsymbol{c_I}))|^p].
	\]
\end{itemize}

If we define $C(p)=2^{p},\forall p\ge 1$, then for any $1\le p<\infty$,
\[
\E[|(D^{\boldsymbol{\alpha}}\mathring{X}_k)(\mathcal{E}(\bt+\boldsymbol{c_I}))|^p]\le C(p)\E[|(D^{\boldsymbol{\alpha}}X_k)(\mathcal{E}(\bt+\boldsymbol{c_I}))|^p],
\]
which proves this lemma.
\end{proof}

\subsubsection{High-low frequency decomposition}

Firstly, we divide $\E[\|\mathfrak{D}_{\boldsymbol{I}}^\varphi\|_{\ell^q}^p]$ into low frequency and high frequency parts. Since $q/p\le 1$, we have
\begin{align*}
\E[\|\mathfrak{D}_{\boldsymbol{I}}^\varphi\|_{\ell^q}^p]&=\E\bigg[\bigg\{\sum_{\boldsymbol{n}\in \Z^d} |\mathfrak{D}_{\boldsymbol{I}}^\varphi (\boldsymbol{n})|^q\bigg\}^{p/q}\bigg]\\
&\le \E\bigg[\bigg\{\sum_{\substack{\boldsymbol{n}\in \Z^d,\\|\boldsymbol{n}|\le 2^k}} |\mathfrak{D}_{\boldsymbol{I}}^\varphi (\boldsymbol{n})|^q\bigg\}^{p/q}\bigg]+\E\bigg[\bigg\{\sum_{\substack{\boldsymbol{n}\in \Z^d,\\|\boldsymbol{n}|>2^k}} |\mathfrak{D}_{\boldsymbol{I}}^\varphi (\boldsymbol{n})|^q\bigg\}^{p/q}\bigg]\\
&=I_{\textnormal{low}}+I_{\textnormal{high}}.
\end{align*}
\subsubsection{Bound $I_{\textnormal{low}}$}
Since the oscillation is negligible in this case, we can simply bound $|\mathfrak{D}_{\boldsymbol{I}}^\varphi(\boldsymbol{n})|$ by
\[
	|\mathfrak{D}_{\boldsymbol{I}}^\varphi(\boldsymbol{n})|\le \langle \boldsymbol{n}\rangle^{\tau/2}\int_{[-1/2,1/2)^d} \bigg|\varphi_{\boldsymbol{I}}(\mathcal{E}(\bt))\bigg[\prod_{j=1}^{k-1}X_j(\mathcal{E}(\bt))\bigg]\mathring{X}_k(\mathcal{E}(\bt))\bigg|\,\mathrm{d}\bt.
\]
Note that the integral term is independent of $\boldsymbol{n}$, hence we can take it out of the summation over $\boldsymbol{n}$. Therefore,
\begin{align*}
\sum_{\substack{\boldsymbol{n}\in \Z^d,\\|\boldsymbol{n}|\le 2^k}} |\mathfrak{D}_{\boldsymbol{I}}^\varphi (\boldsymbol{n})|^q&\le \bigg\{\int_{[-1/2,1/2)^d} \bigg|\varphi_{\boldsymbol{I}}(\mathcal{E}(\bt))\bigg[\prod_{j=1}^{k-1}X_j(\mathcal{E}(\bt))\bigg]\mathring{X}_k(\mathcal{E}(\bt))\bigg|\,\mathrm{d}\bt\bigg\}^q\cdot \bigg\{\sum_{\substack{\boldsymbol{n}\in \Z,\\|\boldsymbol{n}|\le 2^k}} \langle \boldsymbol{n}\rangle^{\tau q/2}\bigg\}\\
&\lesssim 2^{k\tau q/2}\cdot 2^{kd}\cdot \bigg\{\int_{[-1/2,1/2)^d} \bigg|\varphi_{\boldsymbol{I}}(\mathcal{E}(\bt))\bigg[\prod_{j=1}^{k-1}X_j(\mathcal{E}(\bt))\bigg]\mathring{X}_k(\mathcal{E}(\bt))\bigg|\,\mathrm{d}\bt\bigg\}^q. 
\end{align*}
It follows that
\begin{align*}
I_{\textnormal{low}}&\lesssim 2^{k\tau p/2}\cdot 2^{kdp/q}\cdot \E\bigg[\bigg\{\int_{[-1/2,1/2)^d} \bigg|\varphi_{\boldsymbol{I}}(\mathcal{E}(\bt))\bigg[\prod_{j=1}^{k-1}X_j(\mathcal{E}(\bt))\bigg]\mathring{X}_k(\mathcal{E}(\bt))\bigg|\,\mathrm{d}\bt\bigg\}^p\bigg]\\
&\le 2^{k(\tau p/2+dp/q)}\cdot \bigg\{\int_{[-1/2,1/2)^d} |\varphi_{\boldsymbol{I}}(\mathcal{E}(\bt))|\bigg\{\E\bigg[\bigg|\bigg[\prod_{j=1}^{k-1}X_j(\mathcal{E}(\bt))\bigg]\mathring{X}_k(\mathcal{E}(\bt))\bigg|^p\bigg]\bigg\}^{1/p}\,\mathrm{d}\bt\bigg\}^p,
\end{align*}
where the second inequality holds by Minkowski's inequality. Since $\varphi_{\boldsymbol{I}}$ is supported on $2^{-k}([-1,1)^d+\boldsymbol{c_I})$, which has volume $2^{-(k-1)d}$, we have
\[
I_{\textnormal{low}}\lesssim 2^{k(-dp+\tau p/2+dp/q)}\cdot \sup_{\bz\in \T^d}\E\bigg[\prod_{j=1}^{k-1} |X_j(\bz)|^p |\mathring{X}_k(\bz)|^p\bigg].
\]
Using the independence of $X_j$'s and Lemma~\ref{lemma:substitue-Xk-ring}, we obtain
\begin{equation}\label{eq:bound-I-low}
I_{\textnormal{low}}\lesssim 2^{k(-dp+\tau p/2+dp/q)}\cdot \sup_{\bz\in \T^d}\bigg[\prod_{j=1}^{k}\E[|X_j(\bz)|^p]\bigg].
\end{equation}

\subsubsection{Bound $I_{\textnormal{high}}$}\label{sssec:Bound-Ihigh}
We will divide the estimate of $I_{\textnormal{high}}$ into two steps for clarity.
\medskip
{\flushleft \bf Step 1. Integration by parts.}
\medskip

Our main goal in bounding $I_{\textnormal{high}}$ is to exploit the rapid oscillation of the term $e_{\boldsymbol{n}}(\bt)$ for large $|\boldsymbol{n}|$. As mentioned earlier, the standard analytical tool for estimating such oscillatory integrals is integration by parts. In our multi-dimensional setting, this is achieved via Green's second identity.
\begin{lemma}[Green's second identity]\label{lemma:Green's-second-id}
Let $\Omega$ be a region in $\R^d$, and let $f,g\in C^\infty(\Omega)$. We have
\begin{equation}\label{eq:Green's-second-id}
\int_{\Omega}(\Delta f)\cdot g\,\mathrm{d}\bt=\int_{\Omega} f\cdot (\Delta g)\,\mathrm{d}\bt+\int_{\partial \Omega} \bigg(\frac{\partial f}{\partial \boldsymbol{n}}\cdot g-f\cdot \frac{\partial g}{\partial \boldsymbol{n}}\bigg)\,\mathrm{d}\sigma,
\end{equation}
where $\frac{\partial f}{\partial \boldsymbol{n}}$ is the normal derivative of $f$ on the boundary $\partial \Omega$ and $\mathrm{d}\sigma$ is the surface measure on $\partial \Omega$.
\end{lemma}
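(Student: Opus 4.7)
The plan is to derive this classical identity as a consequence of the divergence theorem applied twice, via Green's first identity, rather than by any bespoke construction.

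The first step is to apply the divergence theorem to the vector field $g\nabla f$ on $\Omega$. Since $f,g\in C^\infty(\Omega)$, the product rule gives $\nabla\cdot(g\nabla f)=g\,\Delta f+\nabla g\cdot\nabla f$, so the divergence theorem yields
\[
\int_{\Omega} g\,\Delta f\,\mathrm{d}\bt+\int_{\Omega}\nabla g\cdot\nabla f\,\mathrm{d}\bt=\int_{\partial\Omega} g\,\frac{\partial f}{\partial\boldsymbol{n}}\,\mathrm{d}\sigma.
\]
This is Green's first identity. The second step is to interchange the roles of $f$ and $g$ in the above argument to obtain the companion identity
\[
\int_{\Omega} f\,\Delta g\,\mathrm{d}\bt+\int_{\Omega}\nabla f\cdot\nabla g\,\mathrm{d}\bt=\int_{\partial\Omega} f\,\frac{\partial g}{\partial\boldsymbol{n}}\,\mathrm{d}\sigma.
\]
The third step is to subtract the second identity from the first: the symmetric gradient integral $\int_{\Omega}\nabla f\cdot\nabla g\,\mathrm{d}\bt$ cancels, and a rearrangement produces exactly \eqref{eq:Green's-second-id}.

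There is no genuine analytic obstacle here, as the identity is classical once the divergence theorem is available. The only subtlety lies in the regularity assumption on $\partial\Omega$ needed to invoke the divergence theorem (for instance piecewise smooth, or Lipschitz, with an outward unit normal $\boldsymbol{n}$ defined $\mathrm{d}\sigma$-almost everywhere). In the intended application within \S\ref{sssec:Bound-Ihigh}, the relevant domain $\Omega$ will be a (dilated and translated) dyadic cube inside $[-1/2,1/2)^d$ arising from the parametrization $\mathcal{E}$, so this boundary regularity is automatic and the identity may be invoked directly with $f,g$ taken to be the smooth random integrand and the exponential $e_{\boldsymbol{n}}$.
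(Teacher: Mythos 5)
Your proof is correct and is the standard derivation of Green's second identity: apply the divergence theorem to $g\nabla f$ and to $f\nabla g$ (giving the two forms of Green's first identity), then subtract so that the $\int_\Omega \nabla f\cdot\nabla g$ terms cancel. The paper does not supply its own proof of this lemma; it is stated as a classical fact, and it is used in \S\ref{sssec:Bound-Ihigh} precisely in the way you indicate, where the boundary term vanishes because the relevant function is compactly supported inside the cube. Your remark about the boundary regularity needed for the divergence theorem is a sensible clarification, and it is indeed harmless in the application, since the domain is a cube (piecewise smooth Lipschitz boundary) and in fact the integrand is supported strictly in the interior.
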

Note that
\[
\Delta e_{\boldsymbol{n}}=-4\pi^2|\boldsymbol{n}|^2e_{\boldsymbol{n}}.
\]
If $f$ is a $C^\infty(\R^d)$ function supported on $\Omega$, then the boundary term in \eqref{eq:Green's-second-id} vanishes. In this case, we have
\begin{equation}\label{eq:Green's-second-id-2}
\int_{\Omega}(\Delta f)(\bt)\cdot e_{\boldsymbol{n}}(\bt)\,\mathrm{d}\bt=\int_\Omega f(\bt)\Delta e_{\boldsymbol{n}}(\bt)\,\mathrm{d}\bt=-4\pi^2|\boldsymbol{n}|^2\int_{\Omega} f(\bt)e_{\boldsymbol{n}}(\bt)\,\mathrm{d}\bt.
\end{equation}

For each $k\ge 1$ and $\boldsymbol{I}\in \mathscr{D}_k$, define the function $\mathcal{X}_{\boldsymbol{I}}^\varphi\colon \R^d\to \R$ by
\[
\mathcal{X}_{\boldsymbol{I}}^\varphi(\bt)\coloneq \varphi_k(\mathcal{E}(\bt))\bigg[\prod_{j=1}^{k-1}X_j(\mathcal{E}(\bt+\boldsymbol{c_I}))\bigg]\mathring{X}_k(\mathcal{E}(\bt+\boldsymbol{c_I})),\quad \forall \bt\in [-1/2,1/2]^d, 
\]
and extend it to $\R^d$ by zero. Then $\mathcal{X}_{\boldsymbol{I}}^\varphi$ is a $C^\infty(\R^d)$ function supported on $2^{-k}[-1,1)^d$. 

\begin{lemma}\label{lemma:integration-by-parts}
Let $\tau>0$. For each $k\ge 1$, $\boldsymbol{I}\in \mathscr{D}_k$ and $\boldsymbol{n}\in \Z^d\setminus\{\boldsymbol{0}\}$, we have
\[
\mathfrak{D}_{\boldsymbol{I}}^\varphi(\boldsymbol{n})=\langle \boldsymbol{n}\rangle^{\tau/2}e_{\boldsymbol{n}}(\boldsymbol{c}_{\boldsymbol{I}})(-4\pi^2|\boldsymbol{n}|^2)^{-d}\sum_{|\boldsymbol{\alpha}|=d}\int_{2^{-k}[-1,1)^d}(\partial^{2\boldsymbol{\alpha}}\mathcal{X}_{\boldsymbol{I}}^\varphi)(\bt)e_{\boldsymbol{n}}(\bt)\,\mathrm{d}\bt.
\]
\end{lemma}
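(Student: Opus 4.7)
The plan is to peel the lemma into two clean operations: a change of variables that recasts $\mathfrak{D}_{\boldsymbol{I}}^\varphi(\boldsymbol{n})$ as the Fourier integral of a compactly supported smooth function on $\R^d$, followed by $d$ iterations of Green's second identity exploiting the eigenfunction property $\Delta e_{\boldsymbol{n}} = -4\pi^2 |\boldsymbol{n}|^2 e_{\boldsymbol{n}}$.

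First, I would unfold the definition of $\mathfrak{D}_{\boldsymbol{I}}^\varphi(\boldsymbol{n})$ using the parameterization $\bz=\mathcal{E}(\bs)$ of $\T^d$, and then translate the integration variable by setting $\bs = \bt + \boldsymbol{c}_{\boldsymbol{I}}$. Since $\varphi_{\boldsymbol{I}}(\mathcal{E}(\bt+\boldsymbol{c}_{\boldsymbol{I}})) = \varphi_k(\mathcal{E}(\bt))$ by definition and $e_{\boldsymbol{n}}(\bt+\boldsymbol{c}_{\boldsymbol{I}}) = e_{\boldsymbol{n}}(\bt)\,e_{\boldsymbol{n}}(\boldsymbol{c}_{\boldsymbol{I}})$, the factor $e_{\boldsymbol{n}}(\boldsymbol{c}_{\boldsymbol{I}})$ pulls out cleanly and the remaining integrand is exactly the function $\mathcal{X}_{\boldsymbol{I}}^\varphi(\bt)$, yielding
\[
\mathfrak{D}_{\boldsymbol{I}}^\varphi(\boldsymbol{n})=\langle \boldsymbol{n}\rangle^{\tau/2}\,e_{\boldsymbol{n}}(\boldsymbol{c}_{\boldsymbol{I}})\int_{\R^d}\mathcal{X}_{\boldsymbol{I}}^\varphi(\bt)\,e_{\boldsymbol{n}}(\bt)\,\mathrm{d}\bt.
\]
Property (P0) of Proposition~\ref{prop:smooth-decomposition-log-correlated-field} together with Lemma~\ref{lemma:pou-scaling-Td} guarantees that $\mathcal{X}_{\boldsymbol{I}}^\varphi$ is $C^\infty(\R^d)$, and its support $\supp\mathcal{X}_{\boldsymbol{I}}^\varphi \subset 2^{-k}[-1,1]^d$ is compact.

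Next, I would iterate Green's second identity $d$ times. Because $\mathcal{X}_{\boldsymbol{I}}^\varphi$ is compactly supported on $\R^d$, after choosing any ball $\Omega$ containing $\supp \mathcal{X}_{\boldsymbol{I}}^\varphi$, the boundary term in \eqref{eq:Green's-second-id} vanishes, and \eqref{eq:Green's-second-id-2} gives
\[
\int_{\R^d}\mathcal{X}_{\boldsymbol{I}}^\varphi(\bt)e_{\boldsymbol{n}}(\bt)\,\mathrm{d}\bt = \bigl(-4\pi^2|\boldsymbol{n}|^2\bigr)^{-1}\int_{\R^d}(\Delta \mathcal{X}_{\boldsymbol{I}}^\varphi)(\bt)\,e_{\boldsymbol{n}}(\bt)\,\mathrm{d}\bt.
\]
Each iteration preserves compact support, so applying this identity $d$ times produces the gain $(-4\pi^2|\boldsymbol{n}|^2)^{-d}$ in front of $\int_{\R^d}(\Delta^d\mathcal{X}_{\boldsymbol{I}}^\varphi)(\bt)\,e_{\boldsymbol{n}}(\bt)\,\mathrm{d}\bt$. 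Finally, expanding $\Delta^d = (\partial_1^2+\cdots+\partial_d^2)^d$ via the multinomial theorem rewrites $\Delta^d\mathcal{X}_{\boldsymbol{I}}^\varphi$ as a combination of the pure partial derivatives $\partial^{2\boldsymbol{\alpha}}\mathcal{X}_{\boldsymbol{I}}^\varphi$ with $|\boldsymbol{\alpha}|=d$, and restricting the integration to $2^{-k}[-1,1)^d$ (the support of $\mathcal{X}_{\boldsymbol{I}}^\varphi$) yields the claimed identity.

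The only delicate point is the legitimacy of integration by parts without boundary terms. For $k\geq 2$ this is immediate since $\supp \mathcal{X}_{\boldsymbol{I}}^\varphi \subset (-1/2,1/2)^d$ strictly. For $k=1$, where the translated support touches the boundary of $[-1/2,1/2)^d$, the periodicity of $\varphi_1\circ\mathcal{E}$ and of the $X_j\circ\mathcal{E}$ on $\R^d$ means opposite faces carry matching values, so boundary contributions cancel in pairs; equivalently, one performs the computation directly on $\T^d$ (which has no boundary) before any extension by zero. This is the only step that requires any checking beyond bookkeeping; the rest is an application of Green's identity.
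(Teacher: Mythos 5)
Your argument follows essentially the same route as the paper: the change of variables $\bs=\bt+\boldsymbol{c}_{\boldsymbol{I}}$ to pull out the phase $e_{\boldsymbol{n}}(\boldsymbol{c}_{\boldsymbol{I}})$ and reduce to a compactly supported smooth integrand $\mathcal{X}_{\boldsymbol{I}}^\varphi$ on $\R^d$, then $d$ iterations of Green's identity via the eigenfunction relation $\Delta e_{\boldsymbol{n}}=-4\pi^2|\boldsymbol{n}|^2 e_{\boldsymbol{n}}$, then expansion of $\Delta^d$ into pure partials of order $2d$. Two small remarks. First, the expansion $\Delta^d=(\sum_\ell\partial_\ell^2)^d=\sum_{|\boldsymbol{\alpha}|=d}\frac{d!}{\boldsymbol{\alpha}!}\partial^{2\boldsymbol{\alpha}}$ actually carries the multinomial coefficients $d!/\boldsymbol{\alpha}!$; the paper's statement and proof drop them (which you implicitly echo). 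This is harmless downstream since the lemma only feeds into absolute-value estimates absorbing bounded constants, and your phrase ``a combination'' is in fact the more accurate description. Second, your worry about the $k=1$ case is well placed but the mechanism you invoke is not the one actually at work: you suggest boundary terms cancel across opposite faces by periodicity, whereas what happens is that the bump function $\theta$ underlying $\varphi_1$ satisfies $\theta(\pm 1)=0$ with all derivatives, so $\varphi_1\circ\mathcal{E}$ (and therefore $\mathcal{X}_{\boldsymbol{I}}^\varphi$) vanishes to infinite order on $\partial\bigl([-1/2,1/2]^d\bigr)$. The boundary terms in \eqref{eq:Green's-second-id} are then individually zero, not merely cancelling in pairs, and the extension of $\mathcal{X}_{\boldsymbol{I}}^\varphi$ by zero is genuinely $C^\infty(\R^d)$, exactly as the paper asserts. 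Either resolution salvages the integration by parts, so this is a matter of correct attribution rather than a gap.
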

\begin{proof}
Note that
\begin{align*}
\mathfrak{D}_{\boldsymbol{I}}^\varphi(\boldsymbol{n})&=\langle \boldsymbol{n}\rangle^{\tau/2}\int_{\T^d}\varphi_{\boldsymbol{I}}(\bz) \bigg[\prod_{j=1}^{k-1}X_j(\bz)\bigg]\mathring{X}_k(\bz)\cdot \bz^{\boldsymbol{n}}\,\boldsymbol{m}(\mathrm{d}\bz)\\
&=\langle \boldsymbol{n}\rangle^{\tau/2}\int_{[-1/2,1/2)^d}\varphi_{\boldsymbol{I}}(\mathcal{E}(\bt))\bigg[\prod_{j=1}^{k-1}X_j(\mathcal{E}(\bt))\bigg] \mathring{X}_k(\mathcal{E}(\bt))\cdot e_{\boldsymbol{n}}(\bt)\,\mathrm{d}\bt\\
&=\langle \boldsymbol{n}\rangle^{\tau/2}\int_{[-1/2,1/2)^d+\boldsymbol{c_I}}\varphi_{k}(\mathcal{E}(\bt-\boldsymbol{c_I}))\bigg[\prod_{j=1}^{k-1}X_j(\mathcal{E}(\bt))\bigg] \mathring{X}_k(\mathcal{E}(\bt))\cdot e_{\boldsymbol{n}}(\bt)\,\mathrm{d}\bt.
\end{align*}
By changing of variables, we have
\begin{align*}
\mathfrak{D}_{\boldsymbol{I}}^\varphi(\boldsymbol{n})&=\langle \boldsymbol{n}\rangle^{\tau/2}e_{\boldsymbol{n}}(\boldsymbol{c}_{\boldsymbol{I}})\int_{[-1/2,1/2)^d}\varphi_{k}(\mathcal{E}(\bt)) \bigg[\prod_{j=1}^{k-1}X_j(\mathcal{E}(\bt+\boldsymbol{c_I}))\bigg] \mathring{X}_k(\mathcal{E}(\bt+\boldsymbol{c_I}))\cdot e_{\boldsymbol{n}}(\bt)\,\mathrm{d}\bt\\
&=\langle \boldsymbol{n}\rangle^{\tau/2}e_{\boldsymbol{n}}(\boldsymbol{c}_{\boldsymbol{I}})\int_{2^{-k}[-1,1)^d}\mathcal{X}_{\boldsymbol{I}}^\varphi(\bt)e_{\boldsymbol{n}}(\bt)\,\mathrm{d}\bt.
\end{align*}
Applying \eqref{eq:Green's-second-id-2}, one can obtain
\[
\int_{2^{-k}[-1,1)^d}\mathcal{X}_{\boldsymbol{I}}^\varphi(\bt)e_{\boldsymbol{n}}(\bt)\,\mathrm{d}\bt=(-4\pi^2 |\boldsymbol{n}|^2)^{-d}\int_{2^{-k}[-1,1)^d}(\Delta^d\mathcal{X}_{\boldsymbol{I}}^\varphi)(\bt) e_{\boldsymbol{n}}(\bt)\,\mathrm{d}\bt.
\]
Therefore
\begin{align*}
\mathfrak{D}_{\boldsymbol{I}}^\varphi(\boldsymbol{n})&=\langle \boldsymbol{n}\rangle^{\tau/2}e_{\boldsymbol{n}}(\boldsymbol{c}_{\boldsymbol{I}})(-4\pi^2|\boldsymbol{n}|^2)^{-d}\int_{2^{-k}[-1,1)^d}(\Delta^d\mathcal{X}_{\boldsymbol{I}}^\varphi)(\bt)e_{\boldsymbol{n}}(\bt)\,\mathrm{d}\bt\\
&=\langle \boldsymbol{n}\rangle^{\tau/2}e_{\boldsymbol{n}}(\boldsymbol{c}_{\boldsymbol{I}})(-4\pi^2|\boldsymbol{n}|^2)^{-d}\sum_{|\boldsymbol{\alpha}|=d}\int_{2^{-k}[-1,1)^d}(\partial^{2\boldsymbol{\alpha}}\mathcal{X}_{\boldsymbol{I}}^\varphi)(\bt)e_{\boldsymbol{n}}(\bt)\,\mathrm{d}\bt.
\end{align*}
This completes the proof of this lemma.
\end{proof}

\medskip
{\flushleft \bf Step 2. Bound the integral term.}
\medskip

By Lemma~\ref{lemma:integration-by-parts}, for $\boldsymbol{n}\in \Z^d\setminus\{\boldsymbol{0}\}$, we have
\begin{align*}
|\mathfrak{D}_{\boldsymbol{I}}^\varphi(\boldsymbol{n})|&\le \langle \boldsymbol{n}\rangle^{\tau/2}(4\pi^2|\boldsymbol{n}|^2)^{-d}\sum_{|\boldsymbol{\alpha}|=d}\int_{2^{-k}[-1,1)^d}\big|(\partial^{2\boldsymbol{\alpha}}\mathcal{X}_{\boldsymbol{I}}^\varphi)(\bt)\big|\,\mathrm{d}\bt\\
&\lesssim |\boldsymbol{n}|^{\tau/2-2d}\cdot \sum_{|\boldsymbol{\alpha}|=d}\int_{2^{-k}[-1,1)^d}\big|(\partial^{2\boldsymbol{\alpha}}\mathcal{X}_{\boldsymbol{I}}^\varphi)(\bt)\big|\,\mathrm{d}\bt.
\end{align*}
Again, observe that the integral term does not depend on $\boldsymbol{n}$, so it can be factored out of the summation over $\boldsymbol{n}$, yielding
\[
\sum_{\substack{\boldsymbol{n}\in \Z^d,\\|\boldsymbol{n}|>2^k}} |\mathfrak{D}_{\boldsymbol{I}}^\varphi (\boldsymbol{n})|^q\lesssim \bigg\{\sum_{\substack{\boldsymbol{n}\in \Z^d,\\|\boldsymbol{n}|>2^k}}|\boldsymbol{n}|^{(\tau/2-2d)q}\bigg\}\cdot \bigg(\sum_{|\boldsymbol{\alpha}|=d}\int_{2^{-k}[-1,1)^d}\big|(\partial^{2\boldsymbol{\alpha}}\mathcal{X}_{\boldsymbol{I}}^\varphi)(\bt)\big|\,\mathrm{d}\bt\bigg)^q.
\]
The series about $\boldsymbol{n}$ converges for $\tau<4d$ and sufficiently large $q$, which can be bounded by $2^{k(\tau q/2-2dq+d)}$. Therefore, we have
\begin{align*}
\bigg\{\sum_{\substack{\boldsymbol{n}\in \Z^d,\\|\boldsymbol{n}|>2^k}} |\mathfrak{D}_{\boldsymbol{I}}^\varphi(\boldsymbol{n})|^q\bigg\}^{p/q}\lesssim 2^{k(\tau p/2-2dp+dp/q)}\cdot \bigg(\sum_{|\boldsymbol{\alpha}|=d}\int_{2^{-k}[-1,1)^d}\big|(\partial^{2\boldsymbol{\alpha}}\mathcal{X}_{\boldsymbol{I}}^\varphi)(\bt)\big|\,\mathrm{d}\bt\bigg)^p.
\end{align*}
Thus, applying Minkowski's inequality, we obtain
\begin{align*}
I_{\textnormal{high}}=\E\bigg[\bigg\{\sum_{\substack{\boldsymbol{n}\in \Z^d,\\|\boldsymbol{n}|>2^k}} |\mathfrak{D}_{\boldsymbol{I}}^\varphi(\boldsymbol{n})|^q\bigg\}^{p/q}\bigg]&\lesssim  2^{k(\tau p/2-2dp+dp/q)}\bigg\{\int_{2^{-k}[-1,1)^d} \sum_{|\boldsymbol{\alpha}|=d}\big(\E[|(\partial^{2\boldsymbol{\alpha}}\mathcal{X}_{\boldsymbol{I}}^\varphi)(\bt)|^p]\big)^{1/p}\,\mathrm{d}\bt\bigg\}^p\\
&\lesssim 2^{k(\tau p/2-2dp+dp/q)}\cdot 2^{-kdp}\cdot  \sup_{|\boldsymbol{\alpha}|=2d}\sup_{\bt\in 2^{-k}[-1,1)^d}\E[|\partial^{\boldsymbol{\alpha}}\mathcal{X}_{\boldsymbol{I}}^\varphi(\bt)|^p].
\end{align*}

The following lemma provides a bound for the $p$-norm of the higher derivatives of $\mathcal{X}_{\boldsymbol{I}}^\varphi$.
\begin{lemma}\label{lemma:p-norm-h-derivatives-of-XIphi}
For any $1\le p<\infty$, and any integer $A\ge 0$, we have
\[
\sup_{|\boldsymbol{\alpha}|=A}\sup_{\bt\in 2^{-k}[-1,1)^d}\big(\E[|\partial^{\boldsymbol{\alpha}}\mathcal{X}_{\boldsymbol{I}}^\varphi(\bt)|^p]\big)^{1/p}\lesssim \sup_{\bz\in \T^d}\bigg[\prod_{1\le j\le k}\big(\E[|D^{\boldsymbol{\alpha}}X_j(\bz)|^p]\big)^{1/p}\bigg]\cdot (k^2 2^k)^{A},
\]
where the implicit constant is independent of $k$ and $\boldsymbol{I}$.
\end{lemma}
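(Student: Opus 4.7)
\textbf{Proof proposal for Lemma~\ref{lemma:p-norm-h-derivatives-of-XIphi}.}

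The plan is to differentiate $\mathcal{X}_{\boldsymbol{I}}^\varphi$ using the multivariate Leibniz rule, exploit the independence of the processes $\{X_j\}_{j=1}^k$ to factor the expectation of the resulting products, and then invoke the already established moment bounds for the derivatives of $X_j$ (Lemma~\ref{lemma:p-moment-higher-derivatives-Xj}) together with the scaling estimates for derivatives of the partition of unity $\varphi_k$ (Lemma~\ref{lemma:pou-scaling-Td}).

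First I would write, using the chain rule through $\mathcal{E}$ and the Leibniz rule for a product of $k+1$ smooth factors, the identity
\[
\partial^{\boldsymbol{\alpha}}\mathcal{X}_{\boldsymbol{I}}^\varphi(\bt)=\sum_{\boldsymbol{\beta}+\boldsymbol{\alpha}_1+\cdots+\boldsymbol{\alpha}_k=\boldsymbol{\alpha}}\frac{\boldsymbol{\alpha}!}{\boldsymbol{\beta}!\,\boldsymbol{\alpha}_1!\cdots\boldsymbol{\alpha}_k!}\,\partial^{\boldsymbol{\beta}}(\varphi_k\circ\mathcal{E})(\bt)\prod_{j=1}^{k}\partial^{\boldsymbol{\alpha}_j}\widetilde{X}_j(\bt),
\]
where $\widetilde{X}_j(\bt)\coloneq X_j(\mathcal{E}(\bt+\boldsymbol{c_I}))$ for $1\le j\le k-1$ and $\widetilde{X}_k(\bt)\coloneq \mathring{X}_k(\mathcal{E}(\bt+\boldsymbol{c_I}))$. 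Applying Minkowski's inequality in $L^p(\PP)$ together with the $L^\infty$-bound $\|\partial^{\boldsymbol{\beta}}(\varphi_k\circ\mathcal{E})\|_\infty\le C(|\boldsymbol{\beta}|)2^{k|\boldsymbol{\beta}|}$ from Lemma~\ref{lemma:pou-scaling-Td} reduces the problem to estimating, for each admissible splitting, the $p$-norm
\[
\Big(\E\Big[\Big|\prod_{j=1}^{k}\partial^{\boldsymbol{\alpha}_j}\widetilde{X}_j(\bt)\Big|^p\Big]\Big)^{1/p}.
\]

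Second, I would invoke the independence of $\{X_j\}_{j=1}^k$ (and hence of $\{\widetilde{X}_j\}_{j=1}^k$) to rewrite this $p$-norm as the product
\[
\prod_{j=1}^k\big(\E[|\partial^{\boldsymbol{\alpha}_j}\widetilde{X}_j(\bt)|^p]\big)^{1/p}.
\]
This replaces any need for H\"older's inequality with artificially inflated exponents, which would otherwise blow up with $k$. For indices with $\boldsymbol{\alpha}_j=\boldsymbol{0}$ the $j$-th factor is simply $(\E[|X_j|^p])^{1/p}$ (up to Lemma~\ref{lemma:substitue-Xk-ring} when $j=k$); for indices with $\boldsymbol{\alpha}_j\neq\boldsymbol{0}$, Lemma~\ref{lemma:p-moment-higher-derivatives-Xj} gives $(\E[|\partial^{\boldsymbol{\alpha}_j}X_j|^p])^{1/p}\le C(p,|\boldsymbol{\alpha}_j|)\,j^{2|\boldsymbol{\alpha}_j|}2^{j|\boldsymbol{\alpha}_j|}$. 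Since $(\E[|X_j|^p])^{1/p}\ge 1$ for $p\ge 1$ (because $\E[|X_j|^p]=\exp(p(p-1)\gamma^2\mathcal{K}_j(\boldsymbol{1})/2)\ge 1$), we may rewrite this last bound as
\[
\big(\E[|\partial^{\boldsymbol{\alpha}_j}X_j|^p]\big)^{1/p}\le C(p,|\boldsymbol{\alpha}_j|)\,j^{2|\boldsymbol{\alpha}_j|}2^{j|\boldsymbol{\alpha}_j|}\cdot (\E[|X_j|^p])^{1/p},
\]
valid uniformly in whether $\boldsymbol{\alpha}_j$ is zero or not.

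Third, taking the product over $j$ and using $j\le k$ and $\sum_j|\boldsymbol{\alpha}_j|\le A$, one gets the uniform estimate
\[
\prod_{j=1}^k\big(\E[|\partial^{\boldsymbol{\alpha}_j}\widetilde{X}_j|^p]\big)^{1/p}\le C(p,A)\,(k^2 2^k)^{A-|\boldsymbol{\beta}|}\sup_{\bz\in\T^d}\prod_{j=1}^k (\E[|X_j(\bz)|^p])^{1/p}.
\]
Combined with $\|\partial^{\boldsymbol{\beta}}(\varphi_k\circ\mathcal{E})\|_\infty\lesssim (k^2 2^k)^{|\boldsymbol{\beta}|}$ and the bounded (depending only on $A$ and $d$) number of terms in the Leibniz sum, this yields the claimed bound with total scaling factor $(k^2 2^k)^A$.

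The main obstacle I anticipate is bookkeeping rather than conceptual: one must check that the ``aggregate'' factor $\prod_j j^{2|\boldsymbol{\alpha}_j|}2^{j|\boldsymbol{\alpha}_j|}$ arising from the independent use of Lemma~\ref{lemma:p-moment-higher-derivatives-Xj} can in fact be absorbed into $(k^2 2^k)^A$; this is exactly the point where independence, rather than H\"older, is essential, because a H\"older-based approach would force exponents of order $p(k+1)$ and produce the exponentially blowing constant $\exp(cp^2 k)$ through $\E[|X_j|^{p(k+1)}]$, destroying the estimate. Matching the factor $\widetilde{X}_k=\mathring{X}_k$ to the statement requires only Lemma~\ref{lemma:substitue-Xk-ring}, which loses at most a multiplicative constant.
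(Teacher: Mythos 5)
Your proposal follows essentially the same line of attack as the paper: multivariate Leibniz rule, the scaling bound $\|D^{\boldsymbol{\beta}}\varphi_k\|_\infty \lesssim 2^{k|\boldsymbol{\beta}|}$ from Lemma~\ref{lemma:pou-scaling-Td}, factorization of moments via independence of $\{X_j\}$, Lemma~\ref{lemma:p-moment-higher-derivatives-Xj} for the differentiated factors, Lemma~\ref{lemma:substitue-Xk-ring} for the centered factor, and the observation $\E[|X_j|^p] \ge 1$ to pad in the undifferentiated factors. You are also correct that invoking independence here, rather than a blunt H\"older argument with inflated exponents, is the step that prevents the constant from blowing up in $k$.

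However, there is a genuine gap in the final resumming step. You bound each summand in the Leibniz expansion by its worst case, $(k^2 2^k)^{A}$ times $\prod_j (\E[|X_j|^p])^{1/p}$, and then assert that ``the bounded (depending only on $A$ and $d$) number of terms in the Leibniz sum'' gives the claim. This count is false: the Leibniz sum runs over multi-index decompositions $\boldsymbol{\beta}+\boldsymbol{\alpha}_1+\cdots+\boldsymbol{\alpha}_k=\boldsymbol{\alpha}$ into $k+1$ parts, and for $|\boldsymbol{\alpha}|=A$ the number of such decompositions grows like a polynomial in $k$ of degree $A$ (coordinate-by-coordinate one has $\binom{\alpha_\ell+k}{\alpha_\ell}$ choices). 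Your ``max term $\times$ number of terms'' estimate therefore yields $(k^{3}2^{k})^{A}$ rather than the claimed $(k^{2}2^{k})^{A}$. The paper resolves this precisely by \emph{not} bounding term-by-term but instead evaluating the full weighted Leibniz sum via the multinomial theorem:
\[
\sum_{\sum_{j=0}^{k}\boldsymbol{\alpha}_j=\boldsymbol{\alpha}}\frac{\boldsymbol{\alpha}!}{\boldsymbol{\alpha}_0!\cdots\boldsymbol{\alpha}_k!}\,2^{k|\boldsymbol{\alpha}_0|}\prod_{1\le j\le k}(j^2 2^j)^{|\boldsymbol{\alpha}_j|}
=\Bigl(2^k+\sum_{j=1}^{k} j^2 2^j\Bigr)^{A}\le (3k^{2}2^{k})^{A},
\]
which absorbs both the multinomial coefficients and the polynomially many terms into a single $(k^2 2^k)^{A}$ factor. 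Worth noting: your weaker bound $(k^{3}2^{k})^{A}$ would still feed through Proposition~\ref{prop:localization} and the convergent series \eqref{eq:series-moment-estimate-Mm} without issue, so the downstream conclusion is unaffected. But as a proof of Lemma~\ref{lemma:p-norm-h-derivatives-of-XIphi} as stated, the claim about the term count needs to be replaced by the multinomial identity.
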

Taking $A=2d$ in this lemma, we can conclude that
\begin{equation}\label{eq:bound-I-high}
I_{\textnormal{high}}\lesssim k^{4dp}\cdot 2^{k(-dp+\tau p/2+dp/q)}\cdot \sup_{\bz\in \T^d}\bigg[\prod_{1\le j\le k}\E[|D^{\boldsymbol{\alpha}}X_j(\bz)|^p]\bigg].
\end{equation}
Now we turn back to prove Lemma~\ref{lemma:p-norm-h-derivatives-of-XIphi}.
\begin{proof}[Proof of Lemma~\ref{lemma:p-norm-h-derivatives-of-XIphi}]
Since $\partial_\ell$ commutes with translation, we know that
\[
(\partial^{\boldsymbol{\alpha}}X_j(\mathcal{E}(\cdot+\boldsymbol{c_I})))(\bt)=(D^{\boldsymbol{\alpha}}X_j)(\mathcal{E}(\bt+\boldsymbol{c_I})).
\]
The same formula holds for $\mathring{X}_k$. Therefore, for a multi-index $\boldsymbol{\alpha}$ with $|\boldsymbol{\alpha}|=R$, by Leibniz rule, we have
\[
\partial^{\boldsymbol{\alpha}}\mathcal{X}_{\boldsymbol{I}}^\varphi(\bt)=\sum_{\sum_{j=0}^k \boldsymbol{\alpha}_j=\boldsymbol{\alpha}} \frac{\boldsymbol{\alpha}!}{\boldsymbol{\alpha}_0!\boldsymbol{\alpha}_1!\cdots \boldsymbol{\alpha}_k!}(D^{\boldsymbol{\alpha}_0} \varphi_k)(\mathcal{E}(\bt))\cdot \bigg[\prod_{j=1}^{k-1} (D^{\boldsymbol{\alpha}_j}X_j)(\mathcal{E}(\bt+\boldsymbol{c_I}))\bigg]\cdot (D^{\boldsymbol{\alpha}_k}\mathring{X}_k)(\mathcal{E}(\bt+\boldsymbol{c_I})),
\]
where we write $\boldsymbol{\alpha}_j=(\alpha_{j,1},\alpha_{j,2},\ldots,\alpha_{j,d})$ and $\boldsymbol{\alpha}=(\alpha_1,\alpha_2,\ldots,\alpha_d)$. For clarity, we emphasize that bold symbols such as $\boldsymbol{\alpha}_j$ denote multi-indices, while non-bold symbols like $\alpha_j$ represent scalar components.

To bound this sum, our strategy is to estimate each term by applying independence to separate the products, bounding the moments of each derivative factor using Lemma~\ref{lemma:p-moment-higher-derivatives-Xj} and Lemma~\ref{lemma:pou-scaling-Td}, and finally resumming the bounds via the multinomial theorem.

Let $\sum_{j=0}^k \boldsymbol{\alpha}_j=\boldsymbol{\alpha}$ be a fixed partition. By Lemma~\ref{lemma:substitue-Xk-ring} and independence of $X_j$'s, we have
\begin{align*}
&\E\bigg[\bigg| \bigg[\prod_{j=1}^{k-1} (D^{\boldsymbol{\alpha}_j}X_j)(\mathcal{E}(\bt+\boldsymbol{c_I}))\bigg]\cdot (D^{\boldsymbol{\alpha}_k}\mathring{X}_k)(\mathcal{E}(\bt+\boldsymbol{c_I}))\bigg|^p\bigg]\\
&=\bigg[\prod_{j=1}^{k-1}\E[|(D^{\boldsymbol{\alpha}_j}X_j)(\mathcal{E}(\bt+\boldsymbol{c_I}))|^p]\bigg]\cdot \E[|(D^{\boldsymbol{\alpha}_k}\mathring{X}_k)(\mathcal{E}(\bt+\boldsymbol{c_I}))|^p]
\\
& \lesssim \prod_{j=1}^k \E[|(D^{\boldsymbol{\alpha}_j}X_j)(\mathcal{E}(\bt+\boldsymbol{c_I}))|^p].
\end{align*}
Since $\E[|X_j(\mathcal{E}(\bt+\boldsymbol{c_I}))|^p]\ge 1$, we can insert at most $R$ terms $X_j(\mathcal{E}(\bt+\boldsymbol{c_I}))$ with $\boldsymbol{\alpha}_j=\boldsymbol{0}$, which gives
\begin{align*}
\prod_{j=1}^k \E[|(D^{\boldsymbol{\alpha}_j}X_j)(\mathcal{E}(\bt+\boldsymbol{c_I}))|^p]&=\bigg[\prod_{\substack{1\le j\le k,\\ \boldsymbol{\alpha}_j\ne \boldsymbol{0}}}\E[|(D^{\boldsymbol{\alpha}_j}X_j)(\mathcal{E}(\bt+\boldsymbol{c_I}))|^p]\bigg]\cdot \bigg[\prod_{\substack{1\le j\le k,\\ \boldsymbol{\alpha}_j=\boldsymbol{0}}}\E[|X_j(\mathcal{E}(\bt+\boldsymbol{c_I}))|^p]\bigg]\\
&\le \bigg[\prod_{\substack{1\le j\le k,\\ \boldsymbol{\alpha}_j\ne 0}} \E[|(D^{\boldsymbol{\alpha}_j}X_j)(\mathcal{E}(\bt+\boldsymbol{c_I}))|^p]\bigg]\cdot\bigg[\prod_{j=1}^k \E[|X_j(\mathcal{E}(\bt+\boldsymbol{c_I}))|^p]\bigg].
\end{align*}
By Lemma~\ref{lemma:p-moment-higher-derivatives-Xj}, for each $1\le j\le k$, there exists a constant $M(p,\boldsymbol{\alpha}_j)>0$ such that
\[
\E[|(D^{\boldsymbol{\alpha}_j}X_j)(\mathcal{E}(\bt+\boldsymbol{c_I}))|^p]\le M(p,\boldsymbol{\alpha}_j) j^{2|\boldsymbol{\alpha}_j|p}2^{j|\boldsymbol{\alpha}_j|p}.
\]
By Lemma~\ref{lemma:pou-scaling-Td}, we have
\[
\|D^{\boldsymbol{\alpha}_0}\varphi_k\|_{L^\infty(\T^d)}\lesssim 2^{k|\boldsymbol{\alpha}_0|}.
\]
Thereofre,
\begin{align*}
&\E\bigg[\bigg|(D^{\boldsymbol{\alpha}_0} \varphi_k)(\mathcal{E}(\bt))\cdot \bigg[\prod_{j=1}^{k-1} (D^{\boldsymbol{\alpha}_j}X_j)(\mathcal{E}(\bt+\boldsymbol{c_I}))\bigg]\cdot (D^{\boldsymbol{\alpha}_k}\mathring{X}_k)(\mathcal{E}(\bt+\boldsymbol{c_I}))\bigg|^p\bigg]\\
&\lesssim 2^{k|\boldsymbol{\alpha}_0|p}\bigg[\prod_{j=1}^k \E[|X_j(\mathcal{E}(\bt+\boldsymbol{c_I}))|^p]\bigg]\cdot\bigg[\prod_{\substack{1\le j\le k,\\\boldsymbol{\alpha}_j\ne 0}}(M(p,\boldsymbol{\alpha}_j)j^{2|\boldsymbol{\alpha}_j|p}2^{j|\boldsymbol{\alpha}_j|p})\bigg].
\end{align*}
Since the number of non-zero $\alpha_j$ in $\sum_j \boldsymbol{\alpha}_j=\boldsymbol{\alpha}$ is not larger than $A$, we have
\begin{align*}
&\E\bigg[\bigg|(D^{\boldsymbol{\alpha}_0} \varphi_k)(\mathcal{E}(\bt))\cdot \bigg[\prod_{j=1}^{k-1} (D^{\boldsymbol{\alpha}_j}X_j)(\mathcal{E}(\bt+\boldsymbol{c_I}))\bigg]\cdot (D^{\boldsymbol{\alpha}_k}\mathring{X}_k)(\mathcal{E}(\bt+\boldsymbol{c_I}))\bigg|^p\bigg]\\
&\lesssim 2^{k|\boldsymbol{\alpha}_0|p}\bigg[\prod_{j=1}^k \E[|X_j(\mathcal{E}(\bt+\boldsymbol{c_I}))|^p]\bigg]\cdot\bigg[\prod_{\substack{1\le j\le k,\\\boldsymbol{\alpha}_j\ne 0}}(M(p,\boldsymbol{\alpha}_j)j^{2|\boldsymbol{\alpha}_j|p}2^{j|\boldsymbol{\alpha}_j|p})\bigg].
\end{align*}
Since the number of non-zero $\alpha_j$ in $\sum_j \boldsymbol{\alpha}_j=\boldsymbol{\alpha}$ is not larger than $A$, we have
\begin{align*}
&\E\bigg[\bigg|(D^{\boldsymbol{\alpha}_0} \varphi_k)(\mathcal{E}(\bt))\cdot \bigg[\prod_{j=1}^{k-1} (D^{\boldsymbol{\alpha}_j}X_j)(\mathcal{E}(\bt+\boldsymbol{c_I}))\bigg]\cdot (D^{\boldsymbol{\alpha}_k}\mathring{X}_k)(\mathcal{E}(\bt+\boldsymbol{c_I}))\bigg|^p\bigg]\\
&\lesssim \max\Big\{\sup_{|\boldsymbol{\beta}|\le A}M(p,\boldsymbol{\beta})^A,1\Big\} \cdot \bigg[\prod_{j=1}^k \E[|X_j(\mathcal{E}(\bt+\boldsymbol{c_I}))|^p]\bigg]\cdot 2^{k|\boldsymbol{\alpha}_0|p}\bigg[ \prod_{1\le j\le k} j^{2|\boldsymbol{\alpha}_j|p}2^{j|\boldsymbol{\alpha}_j|p}\bigg]\\
&\lesssim \sup_{\bz\in \T^d}\bigg[\prod_{j=1}^k \E[|X_j(\bz)|^p]\bigg]\cdot 2^{k|\boldsymbol{\alpha}_0|p}\bigg[\prod_{1\le j\le k} j^{2|\boldsymbol{\alpha}_j|p}2^{j|\boldsymbol{\alpha}_j|p}\bigg].
\end{align*}
By Minkowski inequality, we have
\begin{align*}
&\quad \,\big(\E[|\partial^{\boldsymbol{\alpha}}\mathcal{X}_{\boldsymbol{I}}^\varphi(\bt)|^p]\big)^{1/p}\\
&\lesssim  \sup_{\bz\in \T^d}\bigg[\prod_{j=1}^k (\E[|X_j(\bz)|^p])^{1/p}\bigg]\cdot \bigg\{\sum_{\sum_{j=0}^k \boldsymbol{\alpha}_j=\boldsymbol{\alpha}}\frac{\boldsymbol{\alpha}!}{\boldsymbol{\alpha}_0!\boldsymbol{\alpha}_1!\cdots \boldsymbol{\alpha}_k!} \bigg(2^{k|\boldsymbol{\alpha}_0|}\prod_{1\le j\le k} (j^2 2^j)^{|\boldsymbol{\alpha}_j|}\bigg)\bigg\}.
\end{align*}
Note that by the definition of the factorial of a multi-index and the separability of the products, the sum can be decomposed over each dimension $\ell=1,\ldots,d$:
\begin{align*}
\sum_{\sum_{j=0}^k \boldsymbol{\alpha}_j=\boldsymbol{\alpha}}\frac{\boldsymbol{\alpha}!}{\boldsymbol{\alpha}_0!\cdots \boldsymbol{\alpha}_k!}\bigg(2^{k|\boldsymbol{\alpha}_0|}\prod_{1\le j\le k} (j^2 2^j)^{|\boldsymbol{\alpha}_j|}\bigg)=\prod_{\ell=1}^d \Bigg\{ \sum_{\sum_{j=0}^k \alpha_{j,\ell}=\alpha_\ell}\frac{\alpha_\ell!}{\alpha_{0,\ell}!\cdots \alpha_{k,\ell}!} 2^{k\alpha_{0,\ell}} \prod_{1\le j\le k} (j^2 2^j)^{\alpha_{j,\ell}} \Bigg\}.
\end{align*}
The inner sum for each $\ell$ is now a direct application of the multinomial theorem:
\begin{align*}
    \sum_{\sum_{j=0}^k \alpha_{j,\ell}=\alpha_\ell}\frac{\alpha_\ell!}{\alpha_{0,\ell}!\cdots \alpha_{k,\ell}!} 2^{k\alpha_{0,\ell}} \prod_{1\le j\le k} (j^2 2^j)^{\alpha_{j,\ell}} = \left(2^k+\sum_{j=1}^k j^2 2^j\right)^{\alpha_\ell}.
\end{align*}
Substituting this back and combining the exponents, we conclude that
\begin{equation*}
    \left(2^k+\sum_{j=1}^k j^2 2^j\right)^{\sum_{\ell=1}^d \alpha_\ell} = \left(2^k+\sum_{j=1}^k j^2 2^j\right)^{A}.
\end{equation*}

Since
\[
2^k+\sum_{j=1}^k j^2 2^j\le 2^k+k^2\sum_{j=0}^k 2^j=2^k+k^2(2^{k+1}-1)\le 3k^2 2^k,
\]
we conclude that
\[
	\big(\E[|\partial^{\boldsymbol{\alpha}}\mathcal{X}_{\boldsymbol{I}}^\varphi(\bt)|^p]\big)^{1/p}\lesssim  \sup_{\bz\in T^d}\bigg[\prod_{j=1}^k (\E[|X_j(\bz)|^p])^{1/p}\bigg]\cdot (k^22^k)^A.
\]
Taking supremum over $\bt\in 2^{-k}[-1,1)^d$, we finish the proof.
\end{proof}

\subsubsection{Conclusion}

We combine the bounds \eqref{eq:bound-I-low} and \eqref{eq:bound-I-high} to obtain
\[
\E[\|\mathfrak{D}_{\boldsymbol{I}}^\varphi\|_{\ell^q}^p]\lesssim k^{4dp}\cdot 2^{k(-dp+\tau p/2+dp/q)}\cdot \sup_{\bz\in \T^d}\bigg[\prod_{j=1}^k \E[|X_j(\bz)|^p]\bigg].
\]
This proves Proposition~\ref{prop:localization}.

\appendix

\section{Smooth partition of unity on torus}\label{appendix:pou}

In this appendix, we aim to prove Lemma~\ref{lemma:pou-scaling-Td}. The construction of partition of unity with scaling property on $\R^d$ and $\T^d$ is well-known in harmonic analysis, especially in the context of wave packet analysis and time-frequency analysis (e.g.,~\cite[Chapter~2]{Dem20}). For convenience, we shall exhibit the construction here.

Although a partition of unity on $\R^d$ and $\T^d$ could be constructed directly, we opt for a more intuitive approach by first building a one-dimensional partition of unity on $\R$ and $\T$ and then extending it to higher dimensions via a tensor product.
\medskip
{\flushleft \bf Step 1. Construction of partition of unity on $\R$.}
\medskip

A standard choice of bump function $\vartheta\in C^\infty(\R)$ with $\supp \vartheta \subset [-1,1]$ is given by
\[
\vartheta(t)=\begin{cases}
	e^{-1/(1-t^2)},& |t|\le 1,\\
	0,& |t|> 1.
\end{cases}
\]
Then $W(t)\coloneq \sum_{h\in \Z}\vartheta(t-h)$ is a periodic function with $W(t+1)=W(t)$. Note that the summation in the definition of $W$ is locally finite, and the sum is strictly positive for $t\in \R$. We can define
\[
\theta(t)\coloneq \frac{\vartheta(t)}{W(t)},\quad \forall t\in \R.
\]
Then $\theta\in C^\infty(\R)$, $\supp \theta\subset [-1,1]$, and
\[
\sum_{h\in \Z}\theta(t-h)=\sum_{h\in \Z}\frac{\vartheta(t-h)}{W(t-h)}=\frac{1}{W(t)}\sum_{h\in \Z}\vartheta(t-h)=1,\quad \forall t\in \R.
\]
Therefore, $\{\theta(\cdot-h)\}_{h\in \Z}$ forms a partition of unity on $\R$ and the support of each $\theta(\cdot-h)$ is contained in $[-1,1]+h$.

We can now introduce the scaling of $\theta$. Let $k \ge 1$. For each $I\in \widetilde{\mathscr{D}}_k^1$, we define 
\[
\theta_{I}(t)\coloneq \theta(2^k(t-c_{I})),\quad \forall t\in \R.
\]
Here, $c_{I}$ is the center of the cube $I=[0,2^{-k})+2^{-k}h$, that is, $c_{I}=2^{-k}(2^{-1}+h)$.
We claim that 
\begin{equation}\label{eq:pou-R-scaling-1D}
\sum_{I\in \widetilde{\mathscr{D}}_k^1} \theta_{I}(t)=1,\quad \forall t\in \R.
\end{equation}
Moreover, the support of $\theta_{I}$ is contained in $2I$. In fact, for any $t\in \R$, we have
\begin{align*}
\sum_{I\in \widetilde{\mathscr{D}}_k^1} \theta_{I}(t)&=\sum_{I\in \widetilde{\mathscr{D}}_k^1} \theta(2^k(t-c_{I}))=\sum_{h\in \Z}\theta(2^k(t-2^{-k}(2^{-1}+h)))=\sum_{h\in \Z}\theta((2^k t-2^{-1})-h)=1.
\end{align*}
This proves \eqref{eq:pou-R-scaling-1D}.

\medskip
{\flushleft \bf Step 2. Construction of partition of unity on $\T$.}
\medskip

The second step is to transplant the partition of unity on $\R$ to $\T$. Define 
\[
\rho(e(t))\coloneq \theta(t),\quad \forall t\in [-1/2,1/2),
\]
and
\[
\rho_k(e(t))\coloneq \theta(2^k t),\quad \forall t\in [-1/2,1/2).
\]
From the definition, we easily see that $\rho_k\in C^\infty(\T)$ and
\[
\supp \rho_k\subset \overline{B}(1,2^{-k})=e([-2^{-k},2^{-k}]).
\]

The partition of unity on $\T$ at scale $2^{-k}$ is again generated by the translation of $\rho_k$, that is,
\[
\sum_{I\in \mathscr{D}_k^1} \rho_k(e(t-c_I))=\sum_{I\in \mathscr{D}_k^1} \theta_{I}(t)=1,\quad \forall t\in [-1/2,1/2).
\]
Define $\rho_{I}(e(t))\coloneq \rho_k(e(t-c_{I}))$ for each $I\in \mathscr{D}_k^1$ and $t\in [-1/2,1/2)$. Then the equation above can be rewritten as
\[
\sum_{I\in \mathscr{D}_k^1} \rho_{I}(z)=1,\quad \forall z\in \T.
\]

Note that $\rho_k(e(\cdot-c_I))$ is nothing but the periodization of $\theta(2^k(\cdot -c_I))$, which means
\[
\rho_k(e(t-c_I))=\sum_{h\in \Z} \theta(2^k(t-c_I-h)).
\]
Taking sum over all $I\in \mathscr{D}_k^1$, we have
\[
\sum_{I\in \mathscr{D}_k^1} \rho_k(e(t-c_I))=\sum_{I\in \mathscr{D}_k^1}\sum_{h\in \Z} \theta(2^k(t-c_I-h))=\sum_{I\in \widetilde{\mathscr{D}}_k^1} \theta_I(t)=1,\quad \forall t\in \R.
\]

\medskip
{\flushleft \bf Step 3. Construct partition of unity on $\T^d$ and proof of Lemma~\ref{lemma:pou-scaling-Td}.}
\medskip

Now we can naturally define partition of unity on $\T^d$ by tensor product. Define 
\[
\varphi_k(\bz)=\varphi(z_1,z_2,\ldots,z_d)\coloneq \prod_{\ell=1}^d\rho_k(z_\ell),\quad \forall \bz=(z_1,z_2,\ldots,z_d)\in \T^d.
\]
We claim that $\varphi_k$ is the desired function in Lemma~\ref{lemma:pou-scaling-Td}. By definition, for any $\bt\in \R^d$, we have
\[
\sum_{\boldsymbol{I}\in \mathscr{D}_k^d} \varphi_{k}(\mathcal{E}(\bt-\boldsymbol{c}_{\boldsymbol{I}}))=\sum_{\boldsymbol{I}\in \mathscr{D}_k^d} \prod_{\ell=1}^d \rho_{I_\ell}(e(t_\ell-c_{I_\ell})),
\]
where $\boldsymbol{I}=I_1\times I_2\times\cdots \times I_d$ and $I_\ell\in \mathscr{D}_k^1$ for each $\ell=1,2,\ldots,d$. Then
\[
\sum_{\boldsymbol{I} \in \mathscr{D}_k^d} \prod_{\ell=1}^d \rho_{I_\ell}(e(t_\ell-c_{I_\ell}))=\prod_{\ell=1}^d \bigg[\sum_{I_\ell \in \mathscr{D}_k^1} \rho_{k}(e(t_\ell-c_\ell))\bigg]=1.
\]
This proves \eqref{eq:pou-Td-euclidean-coordinate} in Lemma~\ref{lemma:pou-scaling-Td}. Note that
\[
\supp \varphi_k=\prod_{\ell=1}^d \supp \rho_k\subset \prod_{\ell=1}^d e([-2^{-k},2^{-k}])=\mathcal{E}([-2^{-k},2^{-k}]^d),
\]
which is (2) in Lemma~\ref{lemma:pou-scaling-Td}.

Now we prove (3) in Lemma~\ref{lemma:pou-scaling-Td}. By definition, for any multi-index $\boldsymbol{\alpha}=(\alpha_1,\alpha_2,\ldots,\alpha_d)\in \N^d$ and $\bt=(t_1,t_2,\ldots,t_d)\in \R^d$, we have
\[
D^{\boldsymbol{\alpha}}\varphi_k(\mathcal{E}(\bt))=\prod_{\ell=1}^d \frac{\mathrm{d}^{\alpha_\ell}}{\mathrm{d}t_\ell^{\alpha_\ell}}\rho_k(e(t_\ell-c_{I_\ell})).
\]
But
\begin{align*}
\frac{\mathrm{d}^{\alpha_\ell}}{\mathrm{d}t_\ell^{\alpha_\ell}}\rho_k(e(t_\ell-c_{I_\ell}))&=\frac{\mathrm{d}^{\alpha_\ell}}{\mathrm{d}t_\ell^{\alpha_\ell}}\bigg[\sum_{h\in \Z} \theta(2^k(t_\ell-c_{I_\ell}-h))\bigg]
\\
&=\sum_{h\in \Z} \frac{\mathrm{d}^{\alpha_\ell}}{\mathrm{d}t_\ell^{\alpha_\ell}}\theta(2^k(t_\ell-c_{I_\ell}-h))\\
&=\sum_{h\in \Z} 2^{k\alpha_\ell}\bigg(\frac{\mathrm{d}^{\alpha_\ell}}{\mathrm{d}t^{\alpha_\ell}}\theta\bigg)(2^k(t_\ell-c_{I_\ell}-h)).
\end{align*}
Since summands have disjoint supports, we have
\[
\bigg\|D^{\boldsymbol{\alpha}}\varphi_k\bigg\|_{L^\infty(\T^d)}\le \prod_{\ell=1}^d 2^{k\alpha_\ell}\bigg\|\frac{\mathrm{d}^{\alpha_\ell}}{\mathrm{d}t^{\alpha_\ell}}\theta\bigg\|_{L^\infty(\R)}\le 2^{k|\boldsymbol{\alpha}|}\|\theta\|_{C^{|\boldsymbol{\alpha}|}(\R)}^d,
\]
which implies (3) in Lemma~\ref{lemma:pou-scaling-Td}. 

\newcommand{\etalchar}[1]{$^{#1}$}
	
\end{document}